\definecolor{commenti}{rgb}{0.13,0.55,0.13}
\definecolor{stringhe}{rgb}{0.63,0.125,0.94}
\newtheorem{defin}{Definition}[section]
\newtheorem{teo}{Theorem}[section]
\newtheorem{prop}{Proposition}[section]
\newtheorem{lem}{Lemma}[section]
\newtheorem{oss}{Remark}[section]
\newenvironment{proof-sketch}{\noindent{\em Sketch of the Proof.}\hspace*{1em}}{\qed\bigskip}
\newcommand{\R}{{\mathbb{R}}}
\newcommand{\mP}{{\mathscr{P}}}
\newcommand{\mF}{{\mathcal{F}}}
\newcommand{\ep}{{\epsilon}}
\title[Sorting Phenomena For Two Mutually Attracting/Repelling Species]
{Sorting Phenomena in a Mathematical Model For Two Mutually
Attracting/Repelling Species}
\author[M. Burger, M. Di Francesco, S. Fagioli, A. Stevens]{Martin Burger \and Marco Di Francesco \and Simone Fagioli \and Angela Stevens}
\address{Martin Burger, Angela Stevens -  Angewandte Mathe\-ma\-tik,
Westf\"alische Wil\-helms-Universit\"at M\"unster, Einsteinstr. 62, 48149 M\"unster, Germany.}
\address{Marco Di Francesco, Simone Fagioli - Dipartimento di Ingegneria e
Scienze dell'In\-for\-ma\-zione e Matematica, Universit\`{a} degli Studi dell'Aquila, Via Vetoio 1, 67100 Coppito, L'Aquila, Italy.}
\date{}
\begin{document}

\maketitle

\begin{abstract}
Macroscopic models for systems involving diffusion, short-range repulsion, and
long-range attraction
have been studied extensively in the last decades.
In this paper we extend the analysis to a system for two species
interacting with each other according to different inner- and
intra-species attractions. Under suitable conditions on this
self- and crosswise attraction an interesting effect can be observed,
namely phase separation into neighbouring regions, each of which
contains only
one of the species. We prove that the intersection of the support of the
stationary solutions of the continuum model for the two species has
zero Lebesgue measure, while the support of the sum of the two densities is
a connected interval.

Preliminary results indicate the existence of phase
separation, i.e. spatial sorting of the different species.
A detailed analysis is given in one spatial dimension. The existence and shape of segregated stationary
solutions is shown via the Krein-Rutman theorem. Moreover, for small
repulsion/nonlinear diffusion, also uniqueness of these stationary
states is proved.
\end{abstract}

\section{Introduction}

The interplay between (nonlinear) diffusion and nonlocal attractive/repulsive
interactions arises in a variety of contexts in the natural-, life-,
and social sciences.
As a non exhaustive list of examples, let us mention granular media physics
\cite{caglioti,brilliantov}, astrophysics \cite{biler, wolansky92},
semiconductors \cite{gajewski-groeger, wolansky92},
chemotaxis \cite{keller-segel, schaaf,
jaegerluckhaus, alt, post, gajewski-zacharias},
ecology, animal swarming and aggregation \cite{
mimura-yamaguti, nagai, alt,boi,capasso,topaz}, alignment
\cite{Primi, Geigant}, and opinion formation
\cite{sznajd,aletti}.
One way of deriving such models from
\emph{first order} microscopic systems of
(stochastic) interacting particles $x_1,\ldots,x_n$
is the following.
Each particle $x_i$ is driven by nonlinear forces due to short range
repulsion, see \cite{capasso} - respectively undergoes an independent
Brownian
motion. Further, it moves towards higher
concentrations of particles of its own kind, respectively those
of an external signal, \cite{Stevens2000}.
%
%
At the macroscopic and continuum level, this set of rules results in
a nonlocal partial differential
equation
\begin{equation}\label{eq:intro_one_species}
\partial_t \rho = \mathrm{div}\left[\rho\nabla\left( a(\rho)
- W\ast\rho\right)\right],
\end{equation}
respectively, a chemotaxis-type system
\begin{equation}
\partial_t \rho = \mathrm{div} \left[ \mu (\rho, v)  \nabla \rho
- \chi (\rho, v) \rho \nabla v \right] \quad , \quad
\tau \partial_t v = \eta \Delta v + k(\rho,v) \ .  \label{chemotaxis}
\end{equation}
For $\mu(\rho,v) = \rho a'(\rho)$, $\chi(\rho,v) = \chi_0$,
$\tau = 0$, and $k(\rho,v) = \rho - \beta v$, thus
$ v = (\beta I - \Delta)^{-1} \rho $,
system (\ref{chemotaxis}) is equivalent to (\ref{eq:intro_one_species}), with
$ W $ being the
Newtonian- or Bessel-potential ($\beta = 0$ or $\beta >0$).
Especially when $a(\rho) = \log (\rho)$, i.e.
$\mu(\rho,v)= 1$ in this case, then a
prototype model for nonlocal aggregation phenomena results,
namely the simplified, classical parabolic-elliptic Keller-Segel model
\cite{keller-segel},
or in physics a model for
gravitational
self-interacting clusters
\cite{wolansky92}.

In (\ref{eq:intro_one_species}),
$\rho=\rho(x,t)$ denotes the
density of particles,
$a:[0,\infty)\rightarrow[0,+\infty)$ is a $C^1$ monotone increasing
function with $a'(0)=0$, and $W(x)=\tilde{W}(|x|)$ is a $C^1$ potential
with $\tilde{W}'(r)< 0$ for all $r>0$.
The PDE \eqref{eq:intro_one_species} can be interpreted as the gradient flow of the functional
\begin{equation}
\mathcal{F}[\rho]=\int A(\rho)dx - \frac{1}{2}\int \rho W\ast \rho \,  dx
\,
\mbox{ where } A(\rho)=\int_0^\rho a(\xi)d\xi \, , \label{qqq}
\end{equation}
w.r.t. the $d_2$ Wasserstein distance arising in optimal
transport theory. Compare also the different Lyapunov functionals and energies used for \eqref{chemotaxis} in \cite{wolansky92, gajewski-zacharias, post}, e.g. $A(\rho)= \rho \log \rho$.
Roughly speaking, this means that the velocity
$\tilde v =- \nabla \left(a(\rho) - W\ast \rho\right)$
in the continuity equation
$\rho_t + \mathrm{div}\left(\rho \tilde v\right)=0$
can be interpreted as the sub-differential of $\mathcal{F}$ in the
metric space $\mathcal{P}_2$ of probability measures with finite second
moment and metric $d_2$, see \cite{AGS} for
more details. \\

Models of type
\eqref{eq:intro_one_species}, \eqref{chemotaxis} with \emph{two or more species}
being involved are considered in the context of chemotaxis
\cite{espejo,horst_lucia,
wolansky1, wolansky2}, opinion formation \cite{during}, pedestrian
dynamics \cite{degond}, and population biology
\cite{chen_kolokolnikov,dif_fag2}.
A reasonable
generalization of \eqref{qqq} for two species is
\begin{equation}
\mathcal{E}[\rho_1,\rho_2] = \int f(\rho_1,\rho_2) dx -
\frac{1}{2}\int \rho_1 S_1\ast \rho_1 dx -
\frac{1}{2}\int \rho_2 S_2\ast \rho_2 dx -
\int \rho_1 K\ast \rho_2 dx, \label{twospeciesfunctional}
\end{equation}
where $f:[0,+\infty)\times[0,+\infty)\rightarrow \R$, $f\in C^1$,
and $S_1$, $S_2$, $K$ are $C^1$ and radially decreasing potentials
like $W$ above.
The first term of $\mathcal{E}$ typically represents local repulsion, while
the nonlocal terms model attractive
forces. We call $S_1$ and $S_2$ \emph{self-interaction} potentials
and $K$ \emph{cross-interaction} potential.
For $(\mathcal{P}_2(\R^d),d_2)\times (\mathcal{P}_2(\R^d),d_2)$ with the
natural
product topology, the formal gradient flow of $E[\rho_1,\rho_2]$
w.r.t. this metric structure is given by
\begin{equation}\label{eq:intro_two_species_general}
\begin{cases}
\partial_t \rho_1 = \mbox{div}\big[\rho_1\nabla \big( f_{\rho_1}
(\rho_1,\rho_2) -S_1 \ast \rho_1 - K\ast \rho_2\big)\big] & \\
\partial_t \rho_2 = \mbox{div}\big[\rho_2\nabla \big( f_{\rho_2}
(\rho_1,\rho_2) -S_2 \ast \rho_2 - K\ast \rho_1\big)\big]\,. &
\end{cases}
\end{equation}
The functionals we investigate in this paper are of the form
(\ref{twospeciesfunctional}),
with
\[f(\rho_1,\rho_2)={\ep}(\rho_1+\rho_2)^2/2,\quad \epsilon>0,   \]
leading to
\begin{equation}
\begin{cases}
                 \partial_{t}\rho_1=\mbox{div}\big[\ep
\rho_1\nabla(\rho_1+\rho_2)-\rho_1\nabla S_{1}\ast \rho_1-\rho_1\nabla K\ast
\rho_2\big] \\
                \partial_{t}\rho_2=\mbox{div}\big[\ep \rho_2\nabla(\rho_1+\rho_2)-\rho_2\nabla S_{2}\ast \rho_2-\rho_2\nabla K\ast \rho_1\big]
\end{cases}.
\label{eq.diffusion_1}
\end{equation}
Besides modelling local repulsion and global attraction
between different types of particles (cf. the introduction and
references in \cite{franek})
our particular motivation is to consider
cell sorting due to differential attraction and the resulting
pattern formation.
Similar cells or species of equal size
(consequently the terms $(\rho_1+\rho_2)$ in
(\ref{eq.diffusion_1})), but with different reactions to attraction forces
undergo a reorganization process, where cells with stronger self-attraction
finally sort into the center of the total cell population and those with weaker
self-attraction to the outside.
Differential attraction can either be by an external (chemical)
signal or directly between the species.
Such phenomena are observed in developmental processes.
In the first case one may assume that $S_1$, $S_2$, and $K$ are
multiples of the same kernel, modelling indirectly a chemo-attractant;
like the fundamental solution of the elliptic equation for
the chemo-attractant in the prototype
Keller-Segel system, which can be rewritten into the single
prototype aggregation
equation.
We are interested in suitable conditions for the
self- and the
cross-attraction,
which result in the above mentioned sorting phenomenon.
%
%
In \cite{espejo} for differential attraction, i.e. different chemotactic
sensitivities,  of two species towards
higher concentrations of one chemo-attractant,
it was proved that if the solution for the more
strongly attracted species
blows up, than also the second one
blows up at the
same time. The amount of mass, which concentrates in
the joint blowup is different though, and
controlled by the system parameters.
This last, formal result, hints towards
a possible
separation of the main amount of masses of the two species.
The more strongly attracted species accumulates more mass
in the blowup than the other one.
Cell sorting due to differential adhesion/attraction was also discussed
in \cite{vossboehme, king},
where stochastic particle models and
numerical studies of continuum models
were considered. Based on experimental results, see e.g.
\cite{matsukuma1979chemotactic},
sorting
due to differential chemotaxis and chemical spiral waves was
modeled and simulated in \cite{vasiev-weijer, Painter2009}. In
\cite{kang-primi-velazquez} a rigorous analysis was given.
\\


We are interested in steady states of 
\eqref{eq.diffusion_1}
and minimizers of \eqref{twospeciesfunctional}, hence let us first discuss
some relevant results
in the single species setting \eqref{chemotaxis} and \eqref{eq:intro_one_species}.
Varying $\chi$ and $k$ in (\ref{chemotaxis}) plays a similar
role as varying $W$ in (\ref{eq:intro_one_species}) for
the pattern formation properties of the respective solution. Though a
one-to-one connection between all variants of aggregation
equations and chemotaxis systems as in the case of the
Newtonian- and Besselpotential for $(-W)$ and the prototype
Keller-Segel system has not been established, both
types of models and their dynamics are strongly intertwined.

A wealth of mathematical results on existence of global solutions,
blowup phenomena and pattern formation exists for these
models, see e.g. the summaries in \cite{horstmann1, horstmann2, perthame-book}.
The longtime behavior of the prototype models is especially
interesting in two dimensions as a biological model for the peculiar
chemotactic self-organisation of {\sl{Dictyostelium discoideum}}
\cite{jaegerluckhaus},
and for self-graviational
collapse and star formation in astrophysics in dimension three \cite{biler}.
In both cases, see also \cite{nagai}, blowup phenomena in the respective dimension, and point support
of stationary Dirac-type solutions are of crucial relevance for the respective
application.
Non-trivial stationary solutions and their qualitative
features have been analyzed in \cite{schaaf}
for the general system (\ref{chemotaxis})
with Neumann boundary conditions,
for $\chi (\rho, v) = \chi_0 \mu (\rho, v) \rho \phi' (v)$,
and $\tau = 1$.
Especially the one dimensional steady states and their stability are
well understood. These results should be compared with aggregation
equation analoga.
With $\rho = C \exp(\chi_0 \phi(v))$
the steady state analysis in \cite{schaaf} reduces to analyzing
the elliptic equation $\eta \Delta v + k(\psi (v, \lambda), v) = 0$,
where $\psi$ is the flux of $r'(s) = \chi(r,s)/\mu(r,s) = \chi_0 \rho
\phi'(v)$. 

In \cite{alt} density dependent diffusion-drift equations
for aggregation of the form
\begin{equation}
\partial_t \rho = \partial_x \left[ \mu (\rho) \partial_x \rho
- \gamma (\cdot, \rho) \right],  \label{alt-equation}
\end{equation}
are analyzed
with e.g. $\mu(\rho)= m \rho^{m-1}$ and
$\gamma (t,x,\rho)$ being proportional
to $\rho$, respectively depending on a functional of
the density
distribution $\rho(t,\cdot)$. Thus integral terms are included,
as discussed in \cite{mimura-yamaguti, nagai}, and (\ref{alt-equation})
can be compared with (\ref{eq:intro_one_species}).
The connections between chemotaxis systems and (\ref{alt-equation})
are given, and a Lyapunov function is constructed,
i.e. a functional decreasing along solutions
as time increases.  This provides a general theorem
on global existence of solutions. Asymptotic
convergence to steady states is proved, which
can be non-trivial, for instance plateau-like.

Free energies in such models and in chemotaxis
are based on an entropic term, e.g. $A(\rho) = \rho \log \rho - \rho + 1 $ (or just $A(\rho) = \rho \log \rho$) in \eqref{qqq}.
In \cite{wolansky92} not only
global minimizers of \eqref{qqq} are studied, but the whole solution set.
Here $\rho$ is a steady solution of the governing PDE - the associated gradient flow - if and only if it is
a critical point of the variational problem with constraint
$\int \rho = M$.
%
%
Due to the logarithmic term in the entropy we have $a(\rho)  \sim \log \rho$ and inverting this relation to an {\em entropy variable}
$\rho \sim \exp a(\rho)$  one can obtain simplified systems for stationary states.
With this exponential transformation and the Moser-Trudinger inequality
as given in
\cite{moser, moser2}
the critical mass $8\pi$
for graviational collapse is deduced.
Existence, uniqueness, stability and symmetry breaking of stationary
solutions are proved via the
precise connections between the free energy and the
respective Vlasov-Fokker-Planck (chemotaxis-like) equation.

In \cite{gajewski-zacharias}
a Lyapunov functional - c.f. the energy functional in (\ref{qqq}) - for the
prototype chemotaxis-model in two dimensions is provided, and
extended to more general equations in \cite{post}.  Also here
the connection to the exponential
transformation and the elliptic equation given in \cite{moser} is notified.
Geometric criteria for not necessarily trivial stationary states are
derived.
Now any metric $ds^2$ on a two-dimensional sphere determines a Gauss
curvature function $K$ satisfying the Gauss-Bonnet formula,
$ {\int\int}_{S^2} K d \tilde \mu = 4\pi$.
Here $ d\tilde \mu$ is the volume element of $S^2$.
To characterize all $K$ belonging to
metrices $ds^2$ and relating to the standard metric $ds_0^2$
via $ds^2 = p \,  ds_0^2$, with a positive function $p$
on the sphere, one has to determine $p = p (K)$
uniquely. In \cite{moser} it is proved that
the transformation $p = \exp(v)$ reduces this question
to solving
\begin{equation*}
\Delta v + K e^{2v} - 1 = 0,
\end{equation*}
on the sphere,
which is done by a variational approach.
This is a specific form of the
elliptic problem analyzed in \cite{schaaf}, where the
more general transformation
$\rho = C \exp(\chi_0 \phi(v))$ was used
for the steady states analysis of
generalized chemotaxis systems.
With $k(\psi(v, \lambda), v) = K e^{2v} -1 $, i.e.
$\psi (v,\lambda) = Ke^{2v}$, respectively $\chi_0 \phi = 2 v$
in \cite{schaaf}, the above elliptic PDE results.
Qualitative features of simplified chemotaxis systems with non-linear
diffusion have been discussed in \cite{sugiyama1, sugiyama2}.
See also further references therein.
\\

Existence and uniqueness
for \eqref{eq:intro_one_species} are discussed e.g. in
\cite{carrillo_mccann_villani}, the
last part of the book \cite{AGS}, and earlier
via the related works on chemotaxis, c.f.
\cite{jaegerluckhaus, velazquez, horstmann1, horstmann2, sugiyama1, sugiyama2}.
In \cite{bertozzi_et_al} also
singular potentials $W$ are considered, e. g. Coulomb
potential
as in electrodynamics, $W(x) = \vert x \vert$, $W(x) = \vert x \vert^\alpha$,
$\alpha > 2 - d $. We also refer to
\cite{perthame,blanchet_carrillo_carlen,blanchet_carrillo_laurencot} for further discussion.

%


For $a(\rho)=\log\rho$ in (\ref{qqq}), without any nonlocal
effects, but with the addition of a confinement external potential $V$,
%
%
in \cite{JKO} weak solutions to the linear Fokker-Planck
equation in a gradient flow setting
were derived
by constructing the finite time-step minimizing movement
\[\rho^n\mapsto \rho^{n+1}=\mathrm{argmin}_\rho\left(F[\rho]
+\frac{1}{2\Delta t} d_2^2(\rho^n,\rho)\right), \]
and taking the limit $\Delta t\rightarrow 0$. This idea was applied to
\eqref{eq:intro_one_species} in \cite{carrillo_mccann_villani}, and to a
more general metric framework in \cite{AGS}.

%
%
%

The large time
asymptotics of \eqref{eq:intro_one_species} depend on the competing
effects of $\int A(\rho)$, which promotes particle spreading,
such that the density $\rho $ stabilises at a constant state
(which is zero if we consider $\R^d$), whereas the nonlocal term drives the particles
towards aggregation. A Dirac delta results, which is located at the
(preserved) center of mass
of the system. To prove existence of a global minimum of
$\mathcal{F}$ is therefore a challenging problem. In \cite{bedrossian} this issue
was tackled in detail by using Lions' concentrated compactness technique.

The limiting case $a=0$ was analyzed in
\cite{xxx2007, choi_carrillo_hauray}, see also the references therein.
The case $a(\rho)=\rho$ was derived as a nonlocal
\emph{repulsive} effect under the action of a potential
$V_\epsilon$ which converges weakly to a Dirac delta as
$\epsilon \rightarrow 0$. For a formal argument see \cite{capasso},
see also \cite{oelschlaeger} for a rigorous derivation via interacting particle systems with short-repulsion (the repulsion range shrinking to zero as the number of particles goes to infinity). The results in \cite{franek}
and \cite{bedrossian} provide a clear picture of the
behaviour of the corresponding functional
\[\mathcal{G}[\rho]=\frac{1}{2}\int \rho \big(\epsilon\rho + W\ast
\rho\big) dx,\]
with a given diffusion constant $\epsilon>0$, and
a $W^{1,1}$ attractive potential $W\leq 0$. If $\epsilon \geq\|W\|_{L^1}$, then
$\mathcal{G}$ is uniformly convex, and hence zero is its global minimizer.
This suggests that in this case
$\rho\rightarrow 0$ for large times.
On the other hand, if $\epsilon<\|W\|_{L^1}$, a nontrivial global minimizer
exists in the class $\int \rho \, dx = 1$. This suggests existence of
nontrivial
steady state for large times, but the solution to the Cauchy problem could still decay to zero if a large
enough variance is present initially. This problem is still open in the case
of slow diffusion, see \cite{bedrossian2014}. In one dimension a more refined
analysis can be given. In \cite{franek} it was proved that a unique steady
state with given mass and center of mass exists for
\begin{equation*}
\rho_t = [\rho (\epsilon \rho +W\ast \rho)_x]_x,
\end{equation*}
provided that $\epsilon<\|W\|_{L^1}$ and $W$ is radially increasing, negative,
and supported on the whole of $\R$. Such a steady state is also the unique global
minimizer for $\mathcal{G}$ for fixed mass and fixed center of mass, it is
compactly supported, symmetric, and in $W^{1,\infty}$, with a shape similar to
a Barenblatt profile for the porous medium equation, see \cite{vazquez}. Such steady state is locally stable for large times, see the recent \cite{yahya}. The result in \cite{franek} was partly extended to more general nonlinear diffusions in
\cite{burger_fetecau_huang}, where the existence of a unique diffusion
constant for a given support of the steady state is proven. Both results rely
on the Krein-Rutman theorem in order to characterise the steady states as
eigenvectors of a certain nonlocal operator. Further generalizations
and completions of some open questions in those papers were recently given
in \cite{Kaib, KaibPhD}.
One major issue solved in \cite{franek}
is to prove that a one-to-one correspondence exists between the diffusion constant
(eigenvalue) and the support of the steady state. The results in \cite{franek} improved a previous result in
\cite{BuDiF_NHM} about the existence and uniqueness of nontrivial steady
states for small diffusion constant $\epsilon\ll 1$ performed via an implicit function theorem argument.
\\

%
%

Now let us come back to the functional \eqref{twospeciesfunctional}.
The first term of $\mathcal{E}$ typically represents local repulsion.
%
%
Apart from nonlocal cross-diffusion via the kernels $S_i$ also $f$, when containing mixed terms in
$\rho_1$ and $\rho_2$,
introduces cross diffusion. Indeed, \eqref{eq:intro_two_species_general} can be formally rewritten as
\begin{equation}\label{eq:intro_two_species_general_rewritten}
\partial_t U = \mbox{div}\left(D(U)\nabla U\right) - \mbox{div}\left(\left(\begin{matrix} \rho_1 & 0 \\ 0 & \rho_2\end{matrix}\right) \nabla \left(\begin{matrix} S_1\ast \rho_1 + K\ast \rho_2 \\ S_2\ast \rho_2 + K\ast \rho_1\end{matrix}\right)\right),
\end{equation}
with $U=(\rho_1,\rho_2)$,
\[D(U)=\left(\begin{matrix} \rho_1 f_{\rho_1,\rho_1} & \rho_1 f_{\rho_1,\rho_2} \\ \rho_2 f_{\rho_1,\rho_2} & \rho_2 f_{\rho_2,\rho_2}\end{matrix}\right).\]
If $D(U)$ is symmetric and semi-positive definite, then the theory
in \cite{jungel_boundedness} applies, see also the recent \cite{esposito} in the context of Wasserstein gradient flows. However, in our situation $D(U)$ is never
symmetric.
This makes existence proofs for solutions of
\eqref{eq:intro_two_species_general} in full generality a delicate problem.
In most cases, $D(U)$ does not even have a semi-positive definite symmetric
part.

A comprehensive existence and uniqueness theory for
\eqref{eq:intro_two_species_general} with $f\equiv 0$ was given in
\cite{fagioli}, where convergence of the JKO scheme leads to existence of weak
measure valued solutions for `mildly singular' potentials. A suitable notion
of displacement convexity for systems provides a uniqueness result. An
implicit-explicit variation of the JKO scheme yields an existence result in
\cite{fagioli}, also for nonlocal terms which are not of gradient flow type,
i.e. for non symmetric cross-interaction terms. Without the nonlocal interaction part, an existence theory with a direct application of the JKO scheme is
given in \cite{laurencot} for a fully coupled system of two second-order
parabolic degenerate equations arising as a thin film approximation to the
Muskat problem. The peculiar form of the cross-diffusion in this case allows
for an energy functional with good coercivity properties. In \cite{laurencot}
the regularity needed in order to identify the suitable Euler-Lagrange
equation in the
variational scheme was obtained. Although not strictly related to our model,
let us also mention the hybrid variational scheme generalizing the JKO scheme.
This has been introduced in \cite{carrillo&co} for the parabolic-parabolic
Keller-Segel model in $\R^2$ working in the product space
$\mP_2(\R^2)\times L^2(\R^2)$. Using a modified Wasserstein distance between
vector-valued densities on $\R$, in \cite{matzin} the variational structure of
systems with degenerate diffusion and nonlinear reaction terms was investigated.
A more general approach has been discussed in \cite{mielke}, where a
convexity concept
for reaction-diffusion systems was developed,
that allows to analyze some important examples.
\\

In system (\ref{eq.diffusion_1}) the symmetric part of $D(U)$ is not
semi-positive definite for all $\rho_1,\rho_2\geq 0$ in general. A brute-force
JKO approach would imply a good dissipation estimate only for $w=\rho_1+\rho_2$, but not for both $\rho_1$ and $\rho_2$. Thus a
regularising effect for the sum $w$ is still possible. On the other hand, such a degenerate dissipation estimate does not prevent the formation of discontinuities for $\rho_1$ and $\rho_2$ separately.
In order to partly validate this hypothesis, we focus on stationary states
in one space dimension, and show that \emph{discontinuous stationary patterns}
may arise. More precisely, we prove that $\rho_1$ and $\rho_2$ can separate
completely in the stationary state, i.e. they feature a jump discontinuity
at exactly the same point, with the sum $w=\rho_1+\rho_2$ remaining smooth
at that point. We call such a configuration a \emph{fully segregated steady state}.

Segregation in multi-species systems with nonlinear
diffusion terms like in \eqref{eq.diffusion_1} and possible reaction terms
has been widely investigated in the literature. We refer to
\cite{bertsch_gurtin_hilhorst_peletier,bertsch_gurtin_hilhorst,
bertsch_hilhorst_izuhara_mimura,bertsch_dalpasso_mimura} and references
therein.
It is well known that segregated initial data produce a unique
segregated solution. The problem of existence of a mixed solution is partly
open. Difficulties in our case occur due to the presence of
the nonlocal terms, which typically do not allow for a local comparison
principle and may produce aggregation phenomena.
The emergence vs. non-emergence of segregated steady states in two species
systems with nonlocal attraction has recently been treated in
\cite{novaga_et_al}, where the repulsive effect of the nonlinear diffusion
has been replaced by an upper bound for $\rho_1+\rho_2$. Then the minimisation
problem for $\mathcal{E}$ with $f=0$ and $0\leq \rho_1+\rho_2\leq 1$ with
all interaction potentials being multiples of a given function $K$
is analyzed. Sharp conditions on these multiplying factors are provided,
which results in complete segregation of the
two species. In \cite{magni} rearrangements for three already separated
domains has been considered in a different context.

More specifically on system \eqref{eq.diffusion_1} (almost parallel to our result), the recent \cite{zoology} shows how to construct explicit segregated and non-segregated steady states with power-law interaction potentials (with possible confinement effects) and produces numerical evidence that segregation may not occur for large $\epsilon$. At the same time, \cite{alpar} shows that initially segregated initial data yield segregated solutions for all times for a system with the same (cross-)diffusion term of \eqref{eq.diffusion_1} and external potentials (under suitable assumptions on the latter). We also mention at this stage the result in \cite{CFSS}, in which an existence theory for a reaction diffusion system with the same (cross-)diffusion of \eqref{eq.diffusion_1} has been proven via JKO scheme with $BV$ initial conditions, without any initial separation assumption.

Let us know briefly summarise our results. For the one-dimensional case, we first investigate
conditions for existence or non-existence of non trivial stationary solutions.
For small diffusion coefficient $\ep$ we
show existence of segregated stationary states via the
implicit function theorem. We extend the strategies in
\cite{budif,franek} to the multi species case. We also relate stationary solutions to energy minimizers in a rigorous way and provide some results characterizing their structure. In particular we verify that for stationary solutions and energy minimizers, the sum $w$ is supported on a connected interval and we give a rigorous result on the segregation in the case of dominant self-attraction. In a case of weak self-attraction we can characterized the support of the minimizers by the same arguments as in \cite{novaga_et_al}.

For the interaction kernels in \eqref{eq.diffusion_1}
we assume unless further noted:
\begin{itemize}
\item [(A1)] $S_1, S_2, K \in C^2(\R)$,
\item [(A2)] $S_1, S_2, K$ are radially symmetric and decreasing w.r.t.
the radial variable,
\item [(A3)] $S_1, S_2, K$ are nonnegative and have finite mass on $\R$.
\end{itemize}

The paper is organized as follows. In Section 2 we give
some preliminary results on phase separation for \eqref{eq.diffusion_1}. In Section 3 we analyze the
relation of stationary solutions with critical points of the associated energy
functional and provide sufficient conditions on $\epsilon$ and on the
interaction kernels yielding non existence of non-trivial steady states.
Section 4 is devoted to the existence analysis of stationary solutions
involving phase-separation via the Krein-Rutman based approach given in
\cite{franek}. Finally, in Section 5 existence and uniqueness results
for stationary solutions are proved in case of small repulsion,
corresponding to small nonlinear diffusion in \eqref{eq.diffusion_1}. We complement our results with some numerical simulations in Section 6 showing segregated behavior as well as mixing and diffusion-dominated behaviour for large diffusions.

%

\section{Segregation due to differential aggregation}

First, we provide some preliminary results in arbitrary dimensions about
pattern formation for model \eqref{eq.diffusion_1}, and more specifically on
the emergence or non-existence of segregation.
%
%
Consider the canonical model
\begin{equation}
\label{eq.confinement1}
\begin{cases}
\partial_{t}\rho_1=\mbox{div}\big(\ep \rho_1\nabla(\rho_1+\rho_2)
-\rho_1 \nabla V_1\big), \\
		\partial_{t}\rho_2=\mbox{div}\big(\ep \rho_2\nabla(\rho_1+\rho_2)-\rho_2 \nabla V_2 \big)\, ,
\end{cases}
\end{equation}
where $V_1$ and $V_2$ are two given smooth external potentials. We can
rewrite \eqref{eq.diffusion_1} in this form, with
$V_1$ and $V_2$ being determined by convolutions of $\rho_1$ and
$\rho_2$ with aggregation kernels, c.f. (\ref{eq:V1V2}).

\begin{prop} \label{prop.canonicalsegregation}
Let $V_i \in L^1_{loc}(\R^d) \cap W^{1,\infty}(\R^d)$ be given external potentials.
If $(\rho_1^\infty,\rho_2^\infty)$ is a $C^1$ (weak) stationary solution
of \eqref{eq.confinement1}, then we have
\begin{equation}
\mbox{supp}(\rho_1^\infty) \cap \mbox{supp}(\rho_2^\infty) \subseteq \{\nabla V_1 = \nabla V_2\}.
\end{equation}
\end{prop}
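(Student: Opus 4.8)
The plan is to exploit the gradient-flow structure of \eqref{eq.confinement1}: a stationary state carries no flux. Set $w=\rho_1^\infty+\rho_2^\infty$ and, for $i=1,2$, $u_i=\ep w-V_i$; note that $u_i\in W^{1,\infty}_{\mathrm{loc}}(\R^d)$ since $w\in C^1$ and $V_i\in W^{1,\infty}(\R^d)$. The first step is to establish the no-flux identities
\[
\rho_i^\infty\,\nabla u_i=0\qquad\text{a.e. in }\R^d,\quad i=1,2.
\]
For compactly supported $\rho_i^\infty$ this follows by testing the weak stationary formulation of the $i$-th equation against $\varphi=\zeta\,u_i$, with $\zeta\in C_c^\infty(\R^d)$ equal to $1$ on a neighbourhood of $\mathrm{supp}\,\rho_i^\infty$: the term carrying $\nabla\zeta$ vanishes because $\rho_i^\infty\equiv 0$ on $\{\nabla\zeta\neq 0\}$, leaving $\int\rho_i^\infty|\nabla u_i|^2\,dx=0$. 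For non-compactly supported $\rho_i^\infty$ one argues instead from the energy-dissipation identity attached to \eqref{eq.confinement1} (the dissipation being $\int\rho_1^\infty|\nabla u_1|^2+\int\rho_2^\infty|\nabla u_2|^2$), or directly from the adopted definition of weak stationary solution.

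Second, I would carry out the geometric deduction. On the open set $\Omega_i:=\{\rho_i^\infty>0\}$ one may divide the identity above by $\rho_i^\infty$ to obtain $\nabla u_i=0$, i.e.
\[
\ep\,\nabla w=\nabla V_i\qquad\text{a.e.\ on }\Omega_i.
\]
Consequently, on $\Omega_1\cap\Omega_2$ we get $\nabla V_1=\ep\nabla w=\nabla V_2$ a.e., which already gives the claimed inclusion restricted to the positivity sets.

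Third, I would pass from $\Omega_i$ to its closure $\mathrm{supp}\,\rho_i^\infty=\overline{\Omega_i}$. Here $w\in C^1$ makes $\nabla w$ continuous, and in the cases of interest $\nabla V_i$ is continuous too (for the potentials \eqref{eq:V1V2}, convolutions of integrable densities with the $C^2$ kernels of (A1)); then $\ep\nabla w=\nabla V_i$ holds at every point of $\Omega_i$ and extends by continuity to $\overline{\Omega_i}$. Given $x_0\in\mathrm{supp}\,\rho_1^\infty\cap\mathrm{supp}\,\rho_2^\infty$, pick $x_n\to x_0$ with $\rho_1^\infty(x_n)>0$ and let $n\to\infty$ to get $\ep\nabla w(x_0)=\nabla V_1(x_0)$, and likewise $\ep\nabla w(x_0)=\nabla V_2(x_0)$; hence $\nabla V_1(x_0)=\nabla V_2(x_0)$. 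If $\nabla V_i$ is only $L^\infty$, the inclusion is read modulo Lebesgue-null sets, the statement on $\Omega_1\cap\Omega_2$ being the substantive content.

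The main obstacle is Step 1: turning "$(\rho_1^\infty,\rho_2^\infty)$ is a $C^1$ weak stationary solution" into the pointwise no-flux relation $\rho_i^\infty\nabla u_i=0$, since for a merely $C^1$ solution the bare information $\mathrm{div}(\rho_i^\infty\nabla u_i)=0$ is not sufficient (a stationary solution could a priori sustain a divergence-free current). Once the no-flux relation is available, Steps 2--3 are routine; the only remaining technicality is the passage to the closures of the supports, which is innocuous as soon as $\nabla V_i$ is continuous, the generic situation for \eqref{eq.diffusion_1}.
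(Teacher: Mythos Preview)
Your argument is essentially the paper's: at any point where both densities are positive, the stationary equations force $\ep\nabla w=\nabla V_1=\nabla V_2$. The paper's proof is a two-line sketch that simply asserts this equality ``from the weak formulation'' at every $x\in\mathrm{supp}(\rho_1^\infty)\cap\mathrm{supp}(\rho_2^\infty)$, without addressing the passage from $\mathrm{div}(\rho_i^\infty\nabla u_i)=0$ to $\rho_i^\infty\nabla u_i=0$, nor the extension from $\{\rho_i^\infty>0\}$ to its closure.

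You take the same route but supply the missing details: the test $\varphi=\zeta u_i$ to extract $\int\rho_i^\infty|\nabla u_i|^2=0$ (which indeed rules out divergence-free currents in the compactly supported case), and the continuity argument to reach the full support. Your caveat that the pointwise inclusion on $\overline{\Omega_i}$ requires continuity of $\nabla V_i$ is well taken; the paper tacitly uses this (and, for the application to \eqref{eq.diffusion_1} with $C^2$ kernels, it holds). So your proof is correct and strictly more complete than the paper's own, while following the same idea.
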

\begin{proof}
Let $x \in \mbox{supp}(\rho_1^\infty) \cap \mbox{supp}(\rho_2^\infty)$. Then
the weak formulation of \eqref{eq.confinement1} implies
$$ \ep \nabla(\rho_1^\infty+\rho_2^\infty)(x) = \nabla V_1(x) = \nabla V_2(x), $$
from which the assertion follows.
\end{proof}

An immediate consequence of Proposition \ref{prop.canonicalsegregation}
is the following result, which deals with the special case of all the interaction kernels being multiples of the fundamental solution of the Laplace equation.
\begin{prop} \label{prop_laplace_segregation}
Let $K$ be the fundamental solution of the Laplace equation in $\R^d$, and
$S_1=\sigma_1 K$, $S_2=\sigma_2 K$ with $\sigma_1\leq 1\leq \sigma_2$ and
$\sigma_1\neq \sigma_2$. Then, every $C^1$ stationary solution $(\rho_1,\rho_2)$ of \eqref{eq.diffusion_1} is fully segregated, i. e. $\mbox{supp}(\rho_1)\cap \mbox{supp}(\rho_2)$ has empty interior.
\end{prop}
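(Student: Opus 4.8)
The plan is to reduce everything to Proposition~\ref{prop.canonicalsegregation} and then exploit the fact that here $K$ is a fundamental solution of the Laplacian. First I would rewrite \eqref{eq.diffusion_1} in the canonical form \eqref{eq.confinement1} by choosing
\begin{equation*}
V_1 = \sigma_1\, K\ast\rho_1 + K\ast\rho_2, \qquad V_2 = \sigma_2\, K\ast\rho_2 + K\ast\rho_1,
\end{equation*}
and observe that, since $(\rho_1,\rho_2)$ is $C^1$, each $K\ast\rho_i$ is $C^1$ with continuous gradient, which is what is needed to invoke Proposition~\ref{prop.canonicalsegregation} (for $d=2$ the kernel $K$ is only logarithmically bounded, but this affects $K\ast\rho_i$ only at infinity and plays no role in the local argument below).

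Suppose, aiming at a contradiction, that $\Omega:=\mathrm{supp}(\rho_1)\cap\mathrm{supp}(\rho_2)$ has nonempty interior $U$. By Proposition~\ref{prop.canonicalsegregation} we have $\nabla V_1=\nabla V_2$ on $\Omega$, hence $\nabla(V_1-V_2)\equiv 0$ on the open set $U$, so that $V_1-V_2$ is constant on each connected component of $U$. Taking distributional Laplacians and using $-\Delta K=\delta_0$, i.e. $\Delta(K\ast\rho_i)=-\rho_i$, gives on $U$
\begin{equation*}
0=\Delta(V_1-V_2)=\Delta\big((\sigma_1-1)K\ast\rho_1-(\sigma_2-1)K\ast\rho_2\big)=(\sigma_2-1)\rho_2-(\sigma_1-1)\rho_1,
\end{equation*}
and since $\rho_1,\rho_2$ are continuous this is a genuine pointwise identity, $(\sigma_2-1)\rho_2=(\sigma_1-1)\rho_1$ on $U$.

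Now the sign conditions finish the argument. On $U$ both densities are nonnegative while $\sigma_1-1\le 0\le\sigma_2-1$; hence the last identity forces $(\sigma_1-1)\rho_1\equiv(\sigma_2-1)\rho_2\equiv 0$ on $U$. Since $\sigma_1\neq\sigma_2$ and $\sigma_1\le 1\le\sigma_2$, at least one of $\sigma_1<1$, $\sigma_2>1$ holds; in the first case $\rho_1$ vanishes identically on the nonempty open set $U\subseteq\mathrm{supp}(\rho_1)$, which is absurd (a point of the support cannot have a neighbourhood on which the density vanishes), and the second case is symmetric. Therefore $U=\emptyset$, i.e. $\mathrm{supp}(\rho_1)\cap\mathrm{supp}(\rho_2)$ has empty interior.

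The computation itself is short; the only point that requires care is the regularity and decay bookkeeping that legitimises applying Proposition~\ref{prop.canonicalsegregation} and identifying $-\Delta(K\ast\rho_i)$ with $\rho_i$ — in particular handling the $d=2$ logarithmic kernel and the fact that $K$, being the Newtonian potential, does not satisfy the standing assumption (A3) of finite mass, so one must verify directly that $\nabla(K\ast\rho_i)$ is well defined and continuous for the stationary densities at hand.
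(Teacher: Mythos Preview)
Your argument is correct and follows essentially the same route as the paper: rewrite the system in the canonical form, invoke Proposition~\ref{prop.canonicalsegregation} on a hypothetical open subset of the joint support, take the Laplacian of $V_1-V_2$, and use the sign structure of $(\sigma_1-1)$ and $(\sigma_2-1)$ to derive a contradiction. Your treatment of the borderline cases $\sigma_1=1$ or $\sigma_2=1$ is in fact slightly more careful than the paper's, and your closing remarks about the Newtonian kernel failing (A3) and the bookkeeping needed for $\nabla(K\ast\rho_i)$ flag issues the paper passes over in silence.
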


\begin{proof}
\begin{equation} \label{eq:V1V2}
\mbox{Let }\, V_1 = S_1*\rho_1+K*\rho_2, \qquad V_2=S_2*\rho_2 + K*\rho_1.
\end{equation}
Assume that there is a non-empty open set $\mathcal{O} \subset
\mbox{supp}(\rho_1)\cap \mbox{supp}(\rho_2)$. Then Proposition
\ref{prop.canonicalsegregation} implies $\mathcal{O}\subset\{\nabla V_1 =\nabla V_2\}$.
Hence  $V_1-V_2 = c = const.$ on $\mathcal{O}$, i.e.
$ (S_1 - K)* \rho_1 - (S_2-K)*\rho_2 = c$.
With the assumptions on the kernels, we can apply the Laplace operator
and obtain
$ (\sigma_1 -1)\rho_1(x) + (1-\sigma_2) \rho_2(x) = 0 $,
for all $x\in \mathcal{O}$. The assumptions on $\sigma_1$ and $\sigma_2$ imply $\rho_1(x)=\rho_2(x)=0$, which is a contradiction.
\end{proof}

Proposition \ref{prop_laplace_segregation} shows segregation of steady states
in a particular case. However, this effect occurs for a much wider
class of aggregation kernels $S_1, S_2, K$, as we will argue and partially prove below.

First, take a closer look at the dynamics of segregation by using a
transformation of variables similar to the one in
\cite{bertsch_hilhorst_izuhara_mimura}, where strong reaction terms induced
the segregation though. Let
\begin{equation}
w:= \rho_1 + \rho_2, \qquad \zeta:= \frac{\rho_1-\rho_2}{w},
\end{equation}
where the relative difference $\zeta$ is only considered on the support of
the total density $w$.
%
%
Adding both equations in \eqref{eq.confinement1} and using
the notion of (\ref{eq:V1V2})  yields
\begin{eqnarray*}	
\partial_{t}w&=&\mbox{div}\Big(\ep w \nabla w-\rho_1\nabla V_1 -
\rho_2\nabla V_2 \Big),  \\
&=&\mbox{div}\left(\ep w \nabla w-w\frac{1+\zeta}2\nabla V_1 - w\frac{1- \zeta}2
\nabla V_2 \right) \, .
\end{eqnarray*}
Thus the dynamics of $w$ are governed by a porous medium equation with
additional convective terms.
The dynamics of $\zeta$ are obtained by subtracting the
equations for $\rho_1$ and $\rho_2$ and then inserting the equation for $w$, which yields
\begin{equation*}
w	\partial_t \zeta = \left(\ep w \nabla w-w\frac{1+\zeta}2\nabla V_1
- w\frac{1- \zeta}2 \nabla V_2 \right)\cdot \nabla\zeta - \frac{1-\zeta^2}{2}
\mbox{div}\big(w \nabla (V_1 - V_2)\big).
\end{equation*}
As in \cite{bertsch_hilhorst_izuhara_mimura} the evolution of
$\zeta$ is governed by a first-order equation, which gives particular
insight
into the dynamics of the system. The first of the two terms on the
right-hand side of the above equation is along the flux of
$w$, which determines the spatial position and shape of the solution.
The crucial part for the segregation dynamics is the second term, a reaction
term w.r.t. $\zeta$ and two fixed points $\zeta=\pm 1$, corresponding to
segregation. Depending on the sign of
$\mbox{div}\left(w \nabla (V_1 - V_2)\right)$ one of them is stable.
Thus there is always some dynamics towards a segregated state driven
by the differences in the attraction forces.
Instead of pursuing a
time-dependent analysis inspired by the above considerations, we
restrict our search for segregated states to the analysis of stationary
solutions, and leave the stability vs. instability analysis of segregated
patterns to future work.

\section{Steady states vs energy minimization}\label{sec:minimisation}

We now explore the relation between one dimensional steady states of
\eqref{eq.diffusion_1}, namely solutions $(\rho_1,\rho_2)$ of
\begin{equation}\label{eq.stationary}
\begin{cases}
		 0=\big(\ep \rho_1(\rho_1+\rho_2)_x-\rho_1 S'_{1}\ast
\rho_1-\rho_1 K'\ast \rho_2\big)_x \\
	       0=\big(\ep \rho_2(\rho_1+\rho_2)_x-\rho_2S'_{2}\ast
\rho_2-\rho_2 K'\ast \rho_1\big)_x,
\end{cases}
\end{equation}
and the minimisers of the energy functional
\[\mF[\rho_1,\rho_2] = \frac{\epsilon}{2}\int(\rho_1+\rho_2)^2 dx - \frac{1}{2}\int \rho_1 S_1\ast \rho_1 dx - \frac{1}{2}\int \rho_2 S_2\ast \rho_2 dx - \int \rho_1 K\ast \rho_2 dx.\]
Note, that $\rho_1,\rho_2 \in L^2(\R)$ is a natural setting for the
minimization of $\mF$, since assumption (A3) ensures that the
nonlocal terms in $\mF$ are finite. More precisely, we will look for
minimizers
within the set
\begin{equation}
\mathcal{M} = \left \{(\rho_1,\rho_2) \in (L^2(\R)\cap L^1(\R))^2~\Big|~\rho_i
\geq 0, \,  \int_\R \rho_i~dx = m_i \,,\, i=1,2 \right \},\end{equation}
for given masses $m_1$, $m_2$.
The functional setting for the weak solutions of \eqref{eq.stationary} should
involve some space derivative, since the cross-diffusion term in
\eqref{eq.stationary} is not of type $\Delta F(\rho_1,\rho_2)$ with some
nonlinear vector field $F$. Therefore one cannot integrate by parts twice
in the distributional formulation. For a weak formulation involving spatial
derivatives of $\rho_1$ and $\rho_2$ and allowing for discontinuities of each
species at the same time, we choose $BV(\R)$ as the functional setting for the
steady states, whose sum $w=\rho_1+\rho_2$ is Lipschitz continuous. Such a choice is partly supported by the results in \cite{CFSS}, in which the preservation of the $BV$ regularity is proven for a similar system.

\begin{defin}\label{def:stationary}
The pair $(\rho_1,\rho_2)\in \mathcal{M}\cap BV(\R)^2$ is a \emph{weak solution} to \eqref{eq.stationary} if $w=\rho_1+\rho_2 \in \mathrm{Lip}(\R)$, and
\begin{align*}
& 0 = \int \rho_1 \Big( \ep(\rho_1+\rho_2)_x - S_1'\ast \rho_1 -K'\ast
\rho_2 \Big)U_x \,  dx, \\
& 0 = \int \rho_2 \Big( \ep(\rho_1+\rho_2)_x - S_2'\ast \rho_2 -K'\ast
\rho_1 \Big)V_x \,  dx,
\end{align*}
for arbitrary $U,V\in C^1_c(\R)$.
\end{defin}

\begin{prop}\label{prop:stationary}
Let $\left(\rho_1,\rho_2\right)\in \mathcal{M}\cap BV(\R)^2$ be a minimizer of
\[\mF[\rho_1,\rho_2]=\frac{\ep}{2}\int_{\R}(\rho_1+\rho_2)^2dx-\frac{1}{2}\int_{\R}\rho_1S_{1}*\rho_2dx-
\frac{1}{2}\int_{\R}\rho_2S_{2}*\rho_2dx-\int_{\R}\rho_1K*\rho_2dx,\]
such that $w=\rho_1+\rho_2\in \mathrm{Lip}(\R)$. Then $(\rho_1,\rho_2)$ is a
weak solutions of \eqref{eq.stationary} according to Definition
\ref{def:stationary}. Moreover, every weak solution $(\rho_1,\rho_2)$ of
\eqref{eq.stationary} according to Definition \ref{def:stationary} is a
critical point of $\mF$.
\end{prop}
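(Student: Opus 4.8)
The plan is to derive both implications from a single first–variation identity, using affine (multiplicative) perturbations of the densities rather than transport perturbations, so that $s\mapsto\mF$ becomes a genuine quadratic polynomial in $s$ and no delicate differentiation is needed despite $\rho_1,\rho_2$ being merely $BV$. Abbreviate $w=\rho_1+\rho_2$ and, for $\{i,j\}=\{1,2\}$, set
\[
g_i:=\ep\, w-S_i\ast\rho_i-K\ast\rho_j,\qquad g_i':=\ep\, w_x-S_i'\ast\rho_i-K'\ast\rho_j .
\]
Under (A1)--(A3) and $w\in\mathrm{Lip}(\R)$ the $g_i$ are bounded and continuous, the $g_i'$ are bounded and measurable, and $(S_i\ast\rho_i)'=S_i'\ast\rho_i$ (here one uses, besides $w_x\in L^\infty$, that $S_i',K'\in L^1(\R)$ by radial monotonicity and $\rho_i\in BV(\R)\subset L^\infty(\R)$). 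Perturbing only the first species by $\rho_1^{\,s}=\rho_1(1+s\psi)$ with $\psi\in C^1_c(\R)$ and using the evenness of $S_1$ (which absorbs the factor $\tfrac12$ of the quadratic self-term) and of $K$, a short computation gives
\[
\frac{d}{ds}\Big|_{s=0}\mF[\rho_1^{\,s},\rho_2]=\int_\R\rho_1\psi\, g_1\, dx ,
\]
and symmetrically in the second species.

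For the direction that a minimizer is a weak solution, I would take $\psi\in C^1_c(\R)$ with $\int_\R\rho_1\psi\, dx=0$. For $|s|$ small, $(\rho_1(1+s\psi),\rho_2)\in\mathcal M\cap BV(\R)^2$ --- nonnegativity because $|s|\,\|\psi\|_{\infty}<1$, mass because $\int\rho_1\psi=0$, and $L^2\cap L^1\cap BV$ because $\psi$ is smooth with compact support --- and this for \emph{both} signs of $s$; since $s\mapsto\mF[\rho_1(1+s\psi),\rho_2]$ is quadratic, minimality forces $\int_\R\rho_1\psi\, g_1\, dx=0$. Subtracting from $\psi$ a fixed multiple of a bump $\chi$ with $\int\rho_1\chi=1$ removes the mass constraint, so $\rho_1 g_1$ is a.e.\ a constant multiple of $\rho_1$, i.e.\ $g_1\equiv c$ a.e.\ on $\{\rho_1>0\}$ for some constant $c$. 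Since $g_1$ is continuous this extends to $\mathrm{supp}\,\rho_1$, and since $g_1$ is Lipschitz we obtain $g_1'=0$ a.e.\ on $\{\rho_1>0\}$, that is,
\[
\rho_1\big(\ep(\rho_1+\rho_2)_x-S_1'\ast\rho_1-K'\ast\rho_2\big)=0\qquad\text{a.e.\ in }\R ,
\]
and the analogous identity for $\rho_2$. Both equations of Definition~\ref{def:stationary} then hold for arbitrary $U,V\in C^1_c(\R)$, since their integrands vanish a.e.

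For the converse, let $(\rho_1,\rho_2)$ be a weak solution of \eqref{eq.stationary}. The first equation of Definition~\ref{def:stationary}, tested against all $U\in C^1_c(\R)$, says exactly that $\rho_1 g_1'$ has zero distributional derivative; since $\rho_1\in BV(\R)\cap L^1(\R)$ tends to $0$ at $\pm\infty$ and $g_1'\in L^\infty(\R)$, the function $\rho_1 g_1'$ belongs to $L^1(\R)$ and therefore vanishes a.e., and likewise $\rho_2 g_2'\equiv 0$. Plugging this into the first-variation identity above shows that the first variation of $\mF$ vanishes along every admissible perturbation of either density, i.e.\ $(\rho_1,\rho_2)$ is a critical point of $\mF$; so this implication is essentially a rereading of Definition~\ref{def:stationary}.

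The step I expect to require the most care is not the computation but the bookkeeping with the constraint set $\mathcal M$: one needs perturbations that are simultaneously mass- and positivity-preserving, stay in $(L^2\cap L^1\cap BV)(\R)$, and are numerous enough to produce \emph{both} Euler--Lagrange equations and not merely their sum. The multiplicative perturbations above achieve exactly this and, crucially, make the $s$-dependence polynomial, so that no regularity of $\rho_i$ beyond $BV$ is used; the genuine analytic inputs reduce to the Lipschitz regularity of $w$ (used to make sense of $w_x$ in $g_i'$ and to pass from $g_i$ being constant on $\{\rho_i>0\}$ to $g_i'=0$ there) and to elementary bounds on $S_i'$ and $K'$ coming from (A1)--(A3).
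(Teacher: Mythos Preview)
Your forward direction (minimizer $\Rightarrow$ weak solution) is correct and takes a genuinely different, cleaner route from the paper. The paper works with additive transport-type perturbations $\mu=\partial_x(\rho_{1,\gamma}U_{\gamma,x})$ and a mollification limit, whereas your multiplicative perturbations $\rho_1(1+s\psi)$ make $s\mapsto\mF$ an exact quadratic polynomial, avoid any approximation argument, and deliver the strong Euler--Lagrange identity $g_1=c$ a.e.\ on $\{\rho_1>0\}$ directly. Passing from this to $\rho_1 g_1'=0$ a.e.\ via the Lipschitz regularity of $g_1$ (a Lipschitz function constant on a measurable set has vanishing derivative a.e.\ there) is a standard level-set argument, and Definition~\ref{def:stationary} follows immediately. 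This buys you a proof that uses nothing about $\rho_i$ beyond $BV\cap L^1$, with no mollifiers.

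The converse, however, has a gap. From Definition~\ref{def:stationary} you correctly extract $\rho_1 g_1'=0$ a.e., but ``plugging this into the first-variation identity'' does \emph{not} give $\int_\R\rho_1\psi\,g_1\,dx=0$ for every admissible $\psi$: the pointwise identity $\rho_1 g_1'=0$ only forces $g_1$ to be constant on each connected component of $\{\rho_1>0\}$, and these constants need not coincide. A mass-preserving perturbation $\psi$ that moves $\rho_1$-mass from one component to another then produces a nonzero first variation $\int\rho_1\psi\,g_1\,dx$. So a weak solution in the sense of Definition~\ref{def:stationary} may fail to be a critical point of $\mF$ in the strong sense you use unless the support is connected --- a fact proved only later (Propositions~3.2 and~3.3) and not available at this stage. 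The paper's own proof of this implication is equally terse here: its ``density argument'' implicitly restricts to perturbations of the form $(\rho_1 U_x)_x$, which, being of divergence form and supported in $\mathrm{supp}\,\rho_1$, cannot move mass across gaps either. You should either adopt the same restricted notion of critical point (variations along $(\rho_i U_x)_x$ only), in which case the converse really is a rereading of Definition~\ref{def:stationary}, or make the connectedness hypothesis explicit.
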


\proof Let $(\rho_1,\rho_2)\in \mathcal{M}\cap BV(\R)^2$ be a minimizer of
$\mF$. We calculate the first Gateaux derivative of  $\mF$
\begin{equation*}
\frac{d}{d\rho_1}\mF[\rho_1,\rho_2](\mu)=\lim_{\delta\rightarrow 0}
\frac{1}{\delta}\Big(\mF[\rho_1+\delta \mu,\rho_2]-\mF[\rho_1,\rho_2]\Big),
\end{equation*}
along an arbitrary direction $\mu \in L^2(\R)$, such that $(\rho_1+\delta \mu,\rho_2)\in \mathcal{M}$. We have
\begin{align*}
\mF[\rho_1+\delta \mu,\rho_2]-\mF[\rho_1,\rho_2]
&  =\frac{\ep}{2}\int_{\R}(\rho_1+\delta \mu+\rho_2)^2dx-\frac{\ep}{2}\int_{\R}(\rho_1+\rho_2)^2dx\\
& \quad -\frac{1}{2}\int_{\R}(\rho_1+\delta \mu)S_{1}*(\rho_1+\delta \mu)dx+\frac{1}{2}\int_{\R}\rho_1S_{1}*\rho_1dx\\
&\quad  -\int_{\R}(\rho_1+\delta \mu)K*\rho_2dx+\int_{\R}\rho_1K*\rho_2dx\\
&  =\frac{\ep}{2}\int_{\R}(\delta \mu)^2+2\delta \mu (\rho_1+\rho_2)dx-\frac{1}{2}\int_{\R}2\delta \mu S_{1}*\rho_1dx\\
&  \quad -\frac{1}{2}\int_{\R}\delta^{2}\mu S_{1}*\mu dx-\int_{\R}\delta \mu K*\rho_2dx.
\end{align*}
Dividing by $\delta$ and taking the limit $\delta\rightarrow 0$ results in
\begin{equation*}
\frac{d}{d\rho_1}\mF[\rho_1,\rho_2](\mu)=\int_{\R}\mu
\Big(\ep(\rho_1+\rho_2)-S_{1}*\rho_1-K*\rho_2\Big)dx.
\end{equation*}
Let $U\in C^1_{c}(\R)$ be an arbitrary vector field and let
$\mu_\gamma=\partial_{x}(\rho_{1,\gamma} \, U_{\gamma,x})$, where the subscript
$\gamma$ denotes convolution with a standard (compactly supported)
$C^\infty$-mollifier. Then
\begin{equation*}
\frac{d}{d\rho_1}\mF[\rho_1,\rho_2]=-\int_{\R}\rho_{1,\gamma}\partial_{x}
\Big(\ep(\rho_1+\rho_2)-S_{1}*\rho_1-K*\rho_2\Big)\cdot
U_{\gamma,x} \, dx \, ,
\end{equation*}
is well defined, since $w$ is Lipschitz continuous. For
$\gamma\searrow 0$ one obtains the first equation of \eqref{eq.stationary} in
weak form. The second equation follows similarly.

In order to prove the last assertion in the statement, we work by
contradiction. Let $\rho_1$ and $\rho_2$ be as assumed and let a direction
$\mu\in L^2$, such that $(\rho_1+\delta \mu,\rho_2)\in \mathcal{M}$ for which, e.g.
\[\frac{d}{d\rho_1}\mF[\rho_1,\rho_2](\mu)=\int_{\R}\mu
\Big(\ep(\rho_1+\rho_2)-S_{1}*\rho_1-K*\rho_2\Big)dx \neq 0.\]
By a density argument, one finds a function $U\in C^2_c(\R)$ such that
\[\frac{d}{d\rho_1}\mF[\rho_1,\rho_2]((\rho_1 U_x)_x)\neq 0,
\mbox{ which implies } \]
\[0\neq \int \rho_1 U_x \Big(\ep(\rho_1+\rho_2)-S_{1}*\rho_1-K*\rho_2\Big)_x dx,\]
which is well defined since $\rho_1+\rho_2$ is Lipschitz. Thus $(\rho_1,\rho_2)$ cannot be a steady state.
\endproof

Next, introduce a technical result that will be useful to investigate sufficient conditions for a steady state to be a local
minimizer of $\mathcal{F}$ in the next subsection.

\begin{lem}\label{lem:second_gateaux}
The second Gateaux derivatives for $\mF$ on $(\rho_1,\rho_2)$
\[\frac{d^2\mathcal{F}}{d \rho_i d\rho_j}(\rho_1,\rho_2)[\mu,\nu]
= \lim_{\delta\rightarrow 0}\frac{1}{\delta}
\left(\frac{d\mathcal{F}}{d\rho_i}(\rho_1+ \delta_{1,j}\mu,\rho_2+\delta_{2,j}
\nu)-\frac{d\mathcal{F}}{d\rho_i}(\rho_1,\rho_2)\right),\footnote{$\delta_{i,j}$ denotes the Kronecker symbol.}\]
are given by
\begin{eqnarray*}
H[\mu,\nu]&=&\begin{pmatrix} \frac{d^2\mathcal{F}}{d \rho_1^2}(\rho_1,\rho_2)[\mu,\nu] & \frac{d^2\mathcal{F}}{d \rho_1 d\rho_2}(\rho_1,\rho_2)[\mu,\nu] \\
\frac{d^2\mathcal{F}}{d \rho_1 d\rho_2}(\rho_1,\rho_2)[\mu,\nu] & \frac{d^2\mathcal{F}}{d \rho_2^2}(\rho_1,\rho_2)[\mu,\nu]\end{pmatrix} \\
& =& \begin{pmatrix}\int_{\R}(\ep\mu^{2}-\mu S_{1}*\mu)dx & \int_{\R}(\ep\mu\nu-\mu K*\nu)dx \\
& \\
\int_{\R}(\ep\mu\nu-\mu K*\nu)dx & \int_{\R}(\ep\nu^{2}-\nu S_{2}*\nu)dx \end{pmatrix},
\end{eqnarray*}
where $\mu,\nu \in L^2(\R)^2$ are arbitrary and such that $(\rho_1+\delta \mu,\rho_2)$ and  $(\rho_1,\rho_2+\delta\nu)$ are in $\mathcal{M}$.
\end{lem}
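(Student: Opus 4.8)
The plan is to compute $\frac{d^2\mathcal F}{d\rho_i d\rho_j}$ directly by differentiating the first Gateaux derivatives already obtained in the proof of Proposition 3.2, namely
\[
\frac{d\mathcal F}{d\rho_1}(\rho_1,\rho_2)(\mu)=\int_\R \mu\Big(\ep(\rho_1+\rho_2)-S_1*\rho_1-K*\rho_2\Big)dx,
\]
and the analogous expression for $\frac{d\mathcal F}{d\rho_2}$ with $S_2$ and $K$ swapped appropriately. The key observation is that each first derivative is \emph{affine} in $(\rho_1,\rho_2)$: it is a linear functional of $\mu$ whose coefficient depends linearly on $(\rho_1,\rho_2)$. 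Hence the difference quotient defining the second derivative collapses to an exact (not merely asymptotic) identity, and no Taylor remainder needs to be estimated.

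First I would treat the pure second derivative $\frac{d^2\mathcal F}{d\rho_1^2}$. Replacing $\rho_1$ by $\rho_1+\delta\mu$ in the formula for $\frac{d\mathcal F}{d\rho_1}(\cdot)(\nu)$ produces $\int_\R \nu\big(\ep(\rho_1+\delta\mu+\rho_2)-S_1*(\rho_1+\delta\mu)-K*\rho_2\big)dx$; subtracting $\frac{d\mathcal F}{d\rho_1}(\rho_1,\rho_2)(\nu)$ leaves exactly $\delta\int_\R \nu(\ep\mu-S_1*\mu)dx$, so dividing by $\delta$ and letting $\delta\to0$ yields $\int_\R(\ep\mu\nu-\nu\, S_1*\mu)dx$. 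Using radial symmetry of $S_1$ (assumption (A2)), $S_1$ is an even kernel, so $\int \nu\,S_1*\mu=\int \mu\,S_1*\nu$, and in particular the quadratic form evaluated with the same perturbation in both slots (as written in the statement, with $\mu=\nu$ effectively on the diagonal block) is $\int_\R(\ep\mu^2-\mu\,S_1*\mu)dx$. The entry $\frac{d^2\mathcal F}{d\rho_2^2}$ is identical with $S_2$ in place of $S_1$. Second I would handle the mixed derivative: perturbing $\rho_2\mapsto\rho_2+\delta\nu$ in $\frac{d\mathcal F}{d\rho_1}(\cdot)(\mu)$ changes only the terms $\ep\rho_2$ and $-K*\rho_2$, giving a difference $\delta\int_\R\mu(\ep\nu-K*\nu)dx$; hence $\frac{d^2\mathcal F}{d\rho_1 d\rho_2}[\mu,\nu]=\int_\R(\ep\mu\nu-\mu\,K*\nu)dx$. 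Differentiating in the other order, i.e. perturbing $\rho_1$ in $\frac{d\mathcal F}{d\rho_2}(\cdot)(\nu)$, gives $\int_\R(\ep\mu\nu-\nu\,K*\mu)dx$, which equals the previous expression because $K$ is even by (A2); this confirms the symmetry of $H$ and justifies writing a single off-diagonal entry.

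The computation is essentially bookkeeping, so there is no deep obstacle; the only points requiring a word of care are (i) checking that all integrals are finite, which follows from $\mu,\nu\in L^2(\R)$ together with (A3) ($S_1,S_2,K\in L^1$, so $S_i*\mu\in L^2$ by Young's inequality and the products are in $L^1$), and (ii) the admissibility constraint that $(\rho_1+\delta\mu,\rho_2)$, $(\rho_1,\rho_2+\delta\nu)\in\mathcal M$, which restricts $\mu,\nu$ to have zero integral and to keep the densities nonnegative for small $\delta$, but plays no role in the algebra beyond guaranteeing the difference quotients are well defined. Assembling the four entries into the matrix $H$ gives exactly the stated Hessian, completing the proof.
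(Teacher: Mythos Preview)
Your proposal is correct and follows essentially the same approach as the paper: both compute the difference quotient by substituting $\rho_1+\delta\mu$ (respectively $\rho_2+\delta\nu$) into the first Gateaux derivative formula obtained in Proposition~3.2, observe that the result is exactly linear in $\delta$, and read off the second derivative. The paper carries this out only for the $(1,1)$-entry and states that the others are similar; your version is slightly more thorough in that you also verify the symmetry of the off-diagonal entry via the evenness of $K$ and note the integrability via Young's inequality, but the underlying argument is the same.
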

\proof Computing the upper left element of $H[\mu,\nu]$ gives
\begin{eqnarray*}
&& \frac{d}{d\rho_1}\mF[\rho_1+\delta \mu,\rho_2]-\frac{d}{d\rho_1}\mF[\rho_1,\rho_2]\\
&& \quad \quad =\int_{\R}\mu \Big(\ep(\rho_1+\delta \mu+\rho_2)-S_{1}
*(\rho_1+\delta \mu)-K*\rho_2\Big)dx\\
&& \quad \quad \quad  -\int_{\R}\mu \Big(\ep(\rho_1+\rho_2)-S_{1}*\rho_1-K*
\rho_2\Big)dx\\
&& \quad \quad =\delta\int_{\R} \ep\mu^{2}-\mu S_{1}*\mu \, dx.
\end{eqnarray*}
Therefore, we obtain
\begin{equation*}
\frac{d^2}{d\rho_1^2}\mF[\rho_1,\rho_2](\mu)=\int_{\R}(\ep\mu^{2}-\mu S_{1}*\mu)dx.
\end{equation*}
The other entries of $H[\mu,\nu]$ can be computed similarly.
\endproof

\subsection{Non-Existence of Steady States}

We now establish a simple necessary condition for the existence of non trivial
steady states, based on the idea that $\mF$ is strictly
convex when the diffusion part is dominant. Thus zero is the unique global
minimizer.

\begin{lem}\label{lem:coercivity}
Assume that the Fourier transforms of the interaction kernels satisfy
\begin{equation}\label{eq:condition_coercivity}
(i) \quad  \ep> \max\{\hat{S}_1(\xi),\hat{S}_2(\xi)\}, \, \hbox{ and } \,
(ii) \quad (\ep-\hat{S}_1(\xi))(\ep-\hat{S}_2(\xi))> (\ep -\hat{K}(\xi))^2,
\end{equation}
for all $\xi \in \R$. Then there does not exist any global minimiser
$(\rho_1,\rho_2)\in \mathcal{M}$.
\end{lem}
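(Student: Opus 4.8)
The plan is to show that under conditions (i)--(ii) the functional $\mathcal{F}$ is strictly convex on its natural domain, which forces $(0,0)$ to be the unique critical point and hence the unique global minimiser; since $(0,0)\notin\mathcal{M}$ for positive masses $m_1,m_2$, no minimiser exists in $\mathcal{M}$. First I would recall from Lemma~\ref{lem:second_gateaux} that the second variation of $\mathcal{F}$ at any point, in directions $(\mu,\nu)\in L^2(\R)^2$, is the quadratic form
\[
Q[\mu,\nu]=\int_\R(\ep\mu^2-\mu S_1*\mu)\,dx+2\int_\R(\ep\mu\nu-\mu K*\nu)\,dx+\int_\R(\ep\nu^2-\nu S_2*\nu)\,dx,
\]
which is independent of the base point because $\mathcal{F}$ is quadratic. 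The key move is to pass to Fourier variables: by Plancherel and the convolution theorem, writing $\hat\mu,\hat\nu$ for the transforms of $\mu,\nu$,
\[
Q[\mu,\nu]=\int_\R\Big[(\ep-\hat S_1(\xi))|\hat\mu(\xi)|^2+2(\ep-\hat K(\xi))\,\mathrm{Re}\big(\hat\mu(\xi)\overline{\hat\nu(\xi)}\big)+(\ep-\hat S_2(\xi))|\hat\nu(\xi)|^2\Big]\,d\xi.
\]
Here I use that $S_1,S_2,K$ are real and radially symmetric, so their Fourier transforms are real-valued and even, which makes the cross term real after taking real parts.

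**Pointwise positivity of the symbol.** The integrand at each fixed $\xi$ is a real quadratic form in the pair $(\hat\mu(\xi),\hat\nu(\xi))\in\mathbb{C}^2$ with (symmetric, real) coefficient matrix
\[
M(\xi)=\begin{pmatrix}\ep-\hat S_1(\xi) & \ep-\hat K(\xi)\\ \ep-\hat K(\xi) & \ep-\hat S_2(\xi)\end{pmatrix}.
\]
By Sylvester's criterion, $M(\xi)$ is positive definite precisely when $\ep-\hat S_1(\xi)>0$ and $\det M(\xi)=(\ep-\hat S_1(\xi))(\ep-\hat S_2(\xi))-(\ep-\hat K(\xi))^2>0$; condition (i) gives the former (and, symmetrically, $\ep-\hat S_2(\xi)>0$), and condition (ii) gives the latter, for every $\xi\in\R$. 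Hence the integrand is $\geq 0$ pointwise, so $Q[\mu,\nu]\geq 0$, and it vanishes only if $\hat\mu(\xi)=\hat\nu(\xi)=0$ for a.e.\ $\xi$, i.e.\ $\mu=\nu=0$. Thus $\mathcal{F}$ is strictly convex on $L^2(\R)^2$ (or on $\mathcal M$, which is convex).

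**Conclusion and the main obstacle.** With strict convexity in hand, I would argue as follows: if $(\rho_1,\rho_2)\in\mathcal{M}$ were a global minimiser, then for the admissible variation toward $(0,0)$—that is, $t\mapsto(1-t)(\rho_1,\rho_2)$, which stays in the ambient convex cone—strict convexity of the one-variable function $t\mapsto\mathcal{F}((1-t)(\rho_1,\rho_2))$ combined with the gradient-flow/Euler--Lagrange relation from Proposition~\ref{prop:stationary} (a minimiser in $\mathcal M$ is a critical point subject to the mass constraints) would be contradicted unless $(\rho_1,\rho_2)=(0,0)$; but the mass constraints $\int\rho_i=m_i>0$ exclude the zero state. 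Alternatively, and more cleanly, one notes that $\mathcal F(\rho_1,\rho_2)=\tfrac12 Q[\rho_1,\rho_2]>0=\mathcal F(0,0)$ for any nonzero $(\rho_1,\rho_2)$, and a minimiser over $\mathcal M$ would have to beat, or at least tie, the infimum value which can be approached by mass-$m_i$ profiles spread thinly so that $Q$ becomes small—so $\inf_{\mathcal M}\mathcal F=0$ is not attained. The main technical point to handle carefully is not the Fourier computation but this last step: one must confirm that the infimum of $\mathcal{F}$ over $\mathcal{M}$ is indeed $0$ and not attained, which follows because $Q[\rho_1,\rho_2]\to 0$ along any sequence with fixed masses but diverging spatial spread (so $\|\rho_i\|_{L^2}\to 0$ and all terms vanish), while $Q>0$ strictly on every nonzero element of $\mathcal M$. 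This mild scaling argument is the only place where the structure of $\mathcal M$—as opposed to pure convexity of $\mathcal F$—enters, and it is where I would be most careful to phrase the statement about nonexistence of a global minimiser correctly.
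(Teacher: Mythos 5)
Your proposal is correct and follows essentially the same route as the paper: positivity of $\mF$ on nonzero states via the pointwise positive definiteness of the Fourier-side $2\times 2$ symbol under (i)--(ii), combined with a mass-preserving dilation (spreading) argument showing $\inf_{\mathcal M}\mF=0$ is approached but never attained. The only cosmetic difference is that you enter through the second Gateaux derivative of Lemma~\ref{lem:second_gateaux} rather than writing $\mF$ itself in Fourier variables, which is equivalent since $\mF$ is a homogeneous quadratic functional.
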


\proof
Thanks to assumption (A3), there exists $C>0$ such that
\begin{equation}\label{eq:functional_bounded}
\mF[\rho_1,\rho_2]\leq C\left(\|\rho_1\|_{L^2}^2 + \|\rho_2\|_{L^2}^2\right).
\end{equation}
Applying the Fourier transform we get
\begin{eqnarray*}
\mF[\rho_1,\rho_2] &=& \frac{1}{2}\int\left(\ep -\hat{S}_1(\xi)\right)
\hat{\rho}_1^2(\xi) d\xi +
\frac{1}{2}\int\left(\ep -\hat{S}_2(\xi)\right) \hat{\rho}_2^2(\xi) d\xi  \\
&&  + \int \left(\ep-\hat{K}(\xi)\right)\hat{\rho}_1(\xi)\hat{\rho}_2(\xi)d\xi\\
& = & \int (\hat{\rho}_1(\xi),\hat{\rho}_2(\xi))^T \cdot A(\xi) \cdot (\hat{\rho}_1(\xi),\hat{\rho}_2(\xi)) d\xi,
\end{eqnarray*}
\[ \mbox{with } \,
A(\xi)=\left(\begin{matrix}  \frac{1}{2}(\ep -\hat{S}_1(\xi)) &  \frac{1}{2}(\ep-\hat{K}(\xi))\\  \frac{1}{2}(\ep-\hat{K}(\xi)) & \frac{1}{2}(\ep -\hat{S}_2(\xi))\end{matrix}\right).\]
Therefore, conditions \eqref{eq:condition_coercivity} imply that the above quadratic form is positive definite, and hence $\mF[\rho_1,\rho_2]> 0$ for all $\rho_1,\rho_2 \in L^2(\R)$. Now, assume by contradiction that there exists a
global minimiser $(\rho_1,\rho_2)$ with the prescribed mass constraints. Then $\mF[\rho_1,\rho_2]>0$. We now rescale $(\rho_1,\rho_2)$ by a parameter $\lambda>0$ in such a way to preserve the total mass of both components, i.e.
\[\rho_{1,\lambda}(x)=\lambda^{-1}\rho_1(\lambda^{-1}x),\qquad \rho_{2,\lambda}(x)=\lambda^{-1}\rho_2(\lambda^{-1}x),\]
notice that $(\rho_{1,\lambda},\rho_{2,\lambda})\in \mathcal{M}$. We can use \eqref{eq:functional_bounded} as follows
\[0\leq \mF[\rho_{1,\lambda},\rho_{2,\lambda}]
\leq C\left(\|\rho_{1,\lambda}\|_{L^2}^2 + \|\rho_{2,\lambda}\|_{L^2}^2\right)
=C\lambda^{-1}\left(\|\rho_{1}\|_{L^2}^2 + \|\rho_{2}\|_{L^2}^2\right) \ . \]
The latter right-hand side converges to zero as $\lambda \rightarrow +\infty$. Therefore, for a large enough $\lambda$ the value of the functional on $(\rho_{1,\lambda},\rho_{2,\lambda})$ can be made smaller than $\mF[\rho_1,\rho_2]$, thus contradicting the fact that $(\rho_1,\rho_2)$ is a nontrivial global minimiser.
\endproof

We now establish a reasonable necessary condition for the existence of a nontrivial steady state.
\begin{teo}
Under conditions \eqref{eq:condition_coercivity}, there exists no nonzero stationary solution for \eqref{eq.stationary}.
\end{teo}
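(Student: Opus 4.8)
\emph{Plan.} The idea is to argue by contradiction, using Proposition~\ref{prop:stationary} to turn a stationary solution into a critical point of $\mathcal{F}$, and then to exploit the coercivity estimate already obtained in the proof of Lemma~\ref{lem:coercivity}. Suppose $(\rho_1,\rho_2)\in\mathcal{M}\cap BV(\R)^2$, $(\rho_1,\rho_2)\neq(0,0)$, is a weak stationary solution in the sense of Definition~\ref{def:stationary}, with $m_i=\int\rho_i\,dx$ (at least one positive), and put $w=\rho_1+\rho_2$ and $\phi_i=\ep\,w-S_i\ast\rho_i-K\ast\rho_j$, $\{i,j\}=\{1,2\}$. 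First I would record two facts. (a) The weak formulation says $\int\rho_i(\phi_i)_xU_x\,dx=0$ for all $U\in C^1_c(\R)$, i.e. $\rho_i(\phi_i)_x$ has vanishing distributional derivative; since $\rho_i\in L^1(\R)$ and, by (A1)--(A3) together with $\rho_i\in BV(\R)$ and $w\in\mathrm{Lip}(\R)$, $(\phi_i)_x\in L^\infty(\R)$, we get $\rho_i(\phi_i)_x\equiv 0$ a.e.; in particular $\phi_i$ is constant on each connected component of $\{\rho_i>0\}$, and $\int_\R x\,\rho_i(\phi_i)_x\,dx=0$. (b) Since $\mathcal{F}$ is a $2$-homogeneous quadratic functional, expanding it yields the identity
\[
2\,\mathcal{F}[\rho_1,\rho_2]=\int_\R\rho_1\phi_1\,dx+\int_\R\rho_2\phi_2\,dx ,
\]
which can equivalently be read off from the first Gâteaux derivatives computed in Proposition~\ref{prop:stationary}.

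Next I would use the coercivity. The Fourier computation in the proof of Lemma~\ref{lem:coercivity} shows that, under \eqref{eq:condition_coercivity}, the symmetric matrix $A(\xi)$ is positive definite for every $\xi\in\R$; since $\lambda_{\min}(A(\xi))$ is continuous and tends to $\ep/2>0$ as $|\xi|\to\infty$, one has $c_0:=\inf_\xi\lambda_{\min}(A(\xi))>0$, and by Plancherel
\[
\mathcal{F}[\rho_1,\rho_2]\ \ge\ c_0\big(\|\rho_1\|_{L^2}^2+\|\rho_2\|_{L^2}^2\big)\ >\ 0 .
\]
Combined with (b) this means $\int\rho_1\phi_1\,dx+\int\rho_2\phi_2\,dx>0$, and the whole point is to contradict this.

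To do so I would introduce the mass–preserving dilations $\rho_{i,\lambda}(x):=\lambda^{-1}\rho_i(\lambda^{-1}x)\in\mathcal{M}$ and set $g(\lambda):=\mathcal{F}[\rho_{1,\lambda},\rho_{2,\lambda}]$. From $\|\rho_{i,\lambda}\|_{L^2}^2=\lambda^{-1}\|\rho_i\|_{L^2}^2\to0$ and \eqref{eq:functional_bounded} we get $g(\lambda)\to0$ as $\lambda\to+\infty$, whereas $g(\lambda)\ge c_0(\|\rho_{1,\lambda}\|_{L^2}^2+\|\rho_{2,\lambda}\|_{L^2}^2)>0$ for every $\lambda$; in particular $g(1)=\mathcal{F}[\rho_1,\rho_2]>0=\inf_{\lambda>0}g(\lambda)$. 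On the other hand, differentiating the explicit expression
\[
g(\lambda)=\frac{\ep}{2\lambda}\|w\|_{L^2}^2-\frac12\iint\rho_1(u)\rho_1(v)S_1(\lambda(u-v))\,du\,dv-\frac12\iint\rho_2(u)\rho_2(v)S_2(\lambda(u-v))\,du\,dv-\iint\rho_1(u)\rho_2(v)K(\lambda(u-v))\,du\,dv
\]
and symmetrising, one checks that $g'(1)=\int x\rho_1(\phi_1)_x\,dx+\int x\rho_2(\phi_2)_x\,dx=0$ by fact (a) (this is the Pohozaev identity one would otherwise obtain by testing Definition~\ref{def:stationary} with a $C^1_c$ field equal to $-x^2/2$ near the supports, using an exhaustion if the supports are unbounded). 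The contradiction then has to come from the fact that, under \eqref{eq:condition_coercivity}, the critical point $\lambda=1$ of $g$ is forced to be its global minimum, so that $g(1)=\inf g=0$, against $\mathcal{F}[\rho_1,\rho_2]>0$. \emph{This last step is the main obstacle}: one must upgrade ``critical point along transport (Wasserstein) directions'' to ``global minimiser of $\mathcal{F}$ on $\mathcal{M}$'', which Lemma~\ref{lem:coercivity} would then exclude. The cleanest way I see to close it is to note that $\lambda\mapsto(\rho_{1,\lambda},\rho_{2,\lambda})$ is a geodesic in $\mathcal{P}_2(\R)\times\mathcal{P}_2(\R)$ along which \eqref{eq:condition_coercivity} makes $\mathcal{F}$ convex, so that $g$ is convex and $g'(1)=0$ gives $g(1)=\min g$; alternatively, for weak solutions with connected $\mathrm{supp}\,w=[a,b]$ one evaluates $\phi_i$ at an endpoint of $[a,b]$ which bounds $\mathrm{supp}\,\rho_i$ — there $w=0$, hence $\phi_i=-S_i\ast\rho_i-K\ast\rho_j<0$ by (A3), the corresponding multiplier is negative, and together with (a),(b) this forces $\int\rho_i\phi_i\,dx\le 0$ and $2\mathcal{F}[\rho_1,\rho_2]\le 0$, a contradiction.
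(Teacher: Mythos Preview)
Your proposal contains genuine gaps in both of the closing arguments, and it misses the much simpler route the paper takes.

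\textbf{The quantitative coercivity is false.} You claim $c_0:=\inf_\xi\lambda_{\min}(A(\xi))>0$ because $\lambda_{\min}(A(\xi))\to\ep/2$ at infinity. But as $|\xi|\to\infty$ one has $\hat S_i(\xi),\hat K(\xi)\to 0$, so $A(\xi)\to\frac{\ep}{2}\bigl(\begin{smallmatrix}1&1\\1&1\end{smallmatrix}\bigr)$, whose eigenvalues are $0$ and $\ep$; hence $\lambda_{\min}(A(\xi))\to 0$ and $c_0=0$. Pointwise positive definiteness of $A(\xi)$ still gives $\mathcal{F}[\rho_1,\rho_2]>0$ for $(\rho_1,\rho_2)\neq 0$, which is all you really need, but the stated uniform bound is incorrect.

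\textbf{Neither closing step works.} For the first alternative, condition \eqref{eq:condition_coercivity} is a statement about the \emph{flat} $L^2$ Hessian of $\mathcal{F}$ (via Fourier), not about displacement convexity. The interaction terms $-\frac12\int\rho_i S_i\ast\rho_i$ are displacement convex only when $-S_i$ is convex, which is not assumed; for instance Gaussian kernels fail this. So there is no reason for $g$ to be convex, and $g'(1)=0$ does not force $g(1)=\inf g$. For the second alternative, you only obtain $\phi_i<0$ for the species whose support reaches $\partial\,\mathrm{supp}\,w$; in a segregated configuration $\mathrm{supp}\,\rho_1\subset(-L_1,L_1)\subsetneq(-L_2,L_2)=\mathrm{supp}\,w$, the inner species never touches the endpoint, $w$ does not vanish on $\partial\,\mathrm{supp}\,\rho_1$, and you get no sign information on $\phi_1$. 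So you cannot conclude $\int\rho_1\phi_1\le 0$ and the contradiction collapses. You also implicitly assume $\mathrm{supp}\,w$ is a bounded interval, which is not granted in this generality.

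\textbf{What the paper does instead.} The paper avoids transport arguments entirely. It uses Lemma~\ref{lem:second_gateaux} and the Fourier condition \eqref{eq:condition_coercivity} to show that the $L^2$ Hessian $H[\mu,\nu]$ is positive definite for every admissible $(\mu,\nu)$, hence $\mathcal{F}$ is \emph{strictly convex on $L^2\times L^2$}. Then Proposition~\ref{prop:stationary} says a weak stationary solution is a critical point of $\mathcal{F}$; strict convexity upgrades any critical point to the unique global minimiser on $\mathcal{M}$, and Lemma~\ref{lem:coercivity} rules that out. The key idea you are circling around---that \eqref{eq:condition_coercivity} encodes convexity---is there in your step~(4), but you interpret it only as a lower bound on $\mathcal{F}$ instead of as strict convexity of $\mathcal{F}$ itself, which is what lets one pass directly from ``critical point'' to ``global minimiser'' without any Pohozaev or dilation machinery.
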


\proof
Let $(\rho_1,\rho_2)\in \mathcal{M}$. We use Lemma \ref{lem:second_gateaux} to
compute the second derivative $H[\mu,\nu]$ of $\mF[\rho_1,\rho_2]$ in the direction $(\mu,\nu)$. If
\begin{equation}\label{eq:deter}
\int_{\R}(\ep\mu^{2}-\mu S_{1}*\mu)dx\int_{\R}(\ep\nu^{2}-\nu S_{2}*\nu)dx
> \left(\int_{\R}(\ep\mu\nu-\mu K*\nu)dx\right)^{2},
\end{equation}
then $\det(H[\mu,\nu])> 0$. Indeed, applying the Fourier transform on both sides of the inequality above, we obtain the equivalent condition
\begin{align*}
&\int_{\R}(\ep-\hat{S}_{1})\hat{\mu}^{2}d\xi\int_{\R}(\ep-\hat{S}_{2})\hat{\nu}^{2}d\xi > \left(\int_{\R}\hat{\mu}(\ep\hat{\nu}-\hat{K}\hat{\nu})d\xi\right)^{2}.
\end{align*}
Now from \eqref{eq:condition_coercivity}$(ii)$ we deduce
\begin{align*}
& \left(\int (\ep-\hat{K}(\xi))\hat{\mu}(\xi)\hat{\nu}(\xi)d\xi\right)^2 <\left(\int |\ep -\hat{S}_1(\xi)|^{1/2} |\ep -\hat{S}_2(\xi)|^{1/2} |\hat{\mu}(\xi)||\hat{\nu}(\xi)|d\xi\right)^2\\
& \quad \quad  \leq \left(\int |\ep-\hat{S}_1(\xi)| \hat{\mu}^2(\xi) d\xi\right) \left(\int |\ep-\hat{S}_2(\xi)| \hat{\nu}^2(\xi) d\xi\right),
\end{align*}
which implies $\det(H[\mu,\nu])> 0$. From
\eqref{eq:condition_coercivity}$(i)$ one deduces that the second derivative of $\mF$
in (arbitrary) direction $(\mu,\nu)$ is
positive definite. Hence, $\mF$ is
strictly convex on $L^2(\R)\times L^2(\R)$. Now, assume $(\bar{\rho}_1,\bar{\rho}_2)$ is a non trivial steady state. Since $\mF$ is strictly convex, then the critical point $(\bar{\rho}_1,\bar{\rho}_2)$ is also the unique global minimiser. This contradicts Lemma \ref{lem:coercivity}.
\endproof

\begin{oss}
\emph{Conditions \eqref{eq:condition_coercivity} generalise the diffusion-dominated
condition established in \cite{franek} for the one species case. In that case,
the only interaction kernel $S$ should satisfy $\|S\|_{L^1}>\ep$ in order to
ensure the existence of a non trivial steady state, or equivalently the condition $\ep\geq \|S\|_{L^1}$ implies that no global minima exist with fixed positive mass. In the case with two
species, the two conditions
$\|S_i\|_{L^1}<\ep $ $i=1,2$,
ensure that
\[\hat{S}_i(\xi)\leq \|\hat{S}_i\|_{L^\infty} \leq \|S_i\|_{L^1}<\ep\,,\qquad i=1,2,\]
and therefore the trace condition in \eqref{eq:condition_coercivity} is
trivially satisfied. This condition has a similar interpretation to the one
in \cite{franek} for the one species case, i. e. the diffusion part is
stronger than the (self) attraction parts, and so the spreading behaviour of
particles dominates in the large time dynamics. However,
\eqref{eq:condition_coercivity}$(ii)$ is a more specific feature of
the two species case. In order to provide a heuristic interpretation of such condition, let us consider for simplicity Gaussian potentials of the form
\[
S_{1}(x)=S_2(x)=S(x)=\frac{A}{\sigma\sqrt{\pi}}e^{- \frac{x^2}{2\sigma^2}}\,, \qquad K(x)=\frac{B}{\lambda\sqrt{\pi}}e^{- \frac{x^2}{2\lambda^2}} ,
\]
for some positive constants $A, B, \sigma, \lambda$. Applying the Fourier transform we see that condition (i) is equivalent to
\[\ep>A.\]
Condition
\eqref{eq:condition_coercivity}$(ii)$
is satisfied e.g. if
\begin{equation}\label{eq:coercivity2gaussian}
  A<B<\ep\,,\qquad \lambda\ll\sigma,
\end{equation}
i.e. if $K$ has a larger mass than $S$ and 
the variance of $K$ is much smaller compared to that of $S$. Roughly speaking, this means that cross interaction should be relevant only at very small distances between particles of different species. This is not surprising. Consider for instance two separated patterns for $\rho_1$ and $\rho_2$, and assume that condition \eqref{eq:condition_coercivity}$(i)$ is satisfied. Then, the lack of cross diffusion (which only appears when both $\rho_1$ and $\rho_2$ are positive) pushes the two patterns towards `spreading'. The only effect that could prevent the diffusive behaviour to dominate is the cross-interaction. Now, assume that \eqref{eq:condition_coercivity}$(ii)$ is satisfied, for instance in the form \eqref{eq:coercivity2gaussian}. Such a condition somehow ensures that the cross-interaction potential is not exerting any long-range confining effect on the particles to compensate the diffusive effect, because it only acts at small distances. When the two patterns touch each other, the cross-interaction potential will only interfere within the mixing area (still because of \eqref{eq:condition_coercivity}$(ii)$ ), still not enough to produce a `global confinement' and prevent the whole solution from decaying.}
\end{oss}

The above remark pinpoints an important fact about the minimization problem for $\mathcal{F}$: when the two species are separated and far from each other, the only coupling mechanism between them is the cross-interaction term ruled by the potential $K$. Hence, if we impose the conditions ensuring that the two species would feature a nontrivial global minimum in absence of any coupling ($\ep<A$ in the above example), we immediately see that the functional can be diminished by decreasing the distance between the two species. Hence, if the two species are separated at the global minimum state, common sense would suggest that their supports are glued together. Proving that this is actually what happens is the main goal of this paper, and is tackled in the next sections from different viewpoints.

\subsection{Structure of Energy Minimizers}

The previous interpretation of the supports of $\rho_1$ and $\rho_2$ staying separated (possibly gluing together at one edge of their boundaries) is supported by the next result, which refers to a specific case, namely $K$ being
sufficiently smooth with unique maximum at zero, and
$S_i = \sigma_i K$, $i=1,2$, with positive real numbers $\sigma_i$. Under
these conditions, provided that $\sigma_1+\sigma_2>2$, we can prove that the
supports of the two components must necessarily have an empty intersection at
local minimisers of $\mF$, which corresponds to segregation. The next theorem is true also in multiple dimensions, but
for simplicity we only state it in one space dimension.
\begin{teo}
Assume that $K\in C(\R)$  with a unique maximum at zero and being strictly radially decreasing in a neighbourhood. Let
$S_i=\sigma_i K$ for some $\sigma_1,\sigma_2>0$ with $\sigma_1+\sigma_2>2$.
Let $(\rho_1^\infty,\rho_2^\infty)$ be a local minimizer of
$\mF$ on $\mathcal{M}$, and
\begin{equation*}
\mathcal{S}:= \mbox{supp}(\rho_1^\infty)\cap \mbox{supp}(\rho_2^\infty).
\end{equation*}
Then $\mathcal{ S}$ has zero Lebesgue measure.

\end{teo}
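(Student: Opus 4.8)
The plan is to argue by contradiction: suppose that $\mathcal{S}$ has positive Lebesgue measure, so that $\rho_1^\infty$ and $\rho_2^\infty$ are simultaneously bounded away from zero on a set of positive measure, and construct an admissible perturbation $(\rho_1^\infty+\delta\mu,\rho_2^\infty-\delta\nu)$ preserving both masses that strictly decreases $\mF$ to second order, contradicting local minimality. Since $\mF$ is quadratic, $\mF$ at the perturbed pair equals $\mF[\rho_1^\infty,\rho_2^\infty]$ plus the first variation (which vanishes, since a local minimizer is a critical point by Proposition~\ref{prop:stationary}) plus $\tfrac{\delta^2}{2}$ times the second variation $H[\mu,-\nu]$ computed in Lemma~\ref{lem:second_gateaux}; so everything comes down to exhibiting a mass-preserving direction $(\mu,-\nu)$ for which the quadratic form
\[
Q[\mu,\nu]=\int(\ep\mu^2-\mu S_1\ast\mu)\,dx-2\int(\ep\mu\nu-\mu K\ast\nu)\,dx+\int(\ep\nu^2-\nu S_2\ast\nu)\,dx
\]
is strictly negative (the sign flip on the middle term coming from perturbing $\rho_2$ by $-\delta\nu$). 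The natural choice exploiting $S_i=\sigma_i K$ is to take $\mu=\nu$ supported in $\mathcal{S}$ with $\int\mu\,dx=0$, which gives $Q[\mu,\mu]=\int\big(2\ep\mu^2-(\sigma_1+2+\sigma_2)\,\mu K\ast\mu\big)\,dx$ wait — one must be careful: $\int \mu S_1\ast\mu = \sigma_1\int\mu K\ast\mu$, $\int\mu S_2\ast\mu=\sigma_2\int\mu K\ast\mu$, and $\int\mu K\ast\nu=\int\mu K\ast\mu$, so $Q[\mu,\mu]=2\ep\int\mu^2\,dx-(\sigma_1+\sigma_2-2)\int\mu K\ast\mu\,dx$. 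Since $\sigma_1+\sigma_2>2$, it suffices to find a zero-mean $\mu$ supported in $\mathcal{S}$ with $\int\mu K\ast\mu\,dx$ large relative to $\|\mu\|_{L^2}^2$.

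The key analytic step is therefore: given that $K$ has a unique strict maximum at the origin and is strictly radially decreasing near $0$, produce a zero-mean function $\mu$ supported in a set of positive measure for which $\int\mu K\ast\mu\,dx>\tfrac{2\ep}{\sigma_1+\sigma_2-2}\int\mu^2\,dx$. I would do this by concentrating $\mu$ near a Lebesgue density point $x_0$ of $\mathcal{S}$: write $\mu=\mu_r$ a rescaled bump of width $r$ consisting of a narrow positive spike minus a compensating term, all inside $\mathcal{S}$ (possible because $x_0$ is a density point, so for small $r$ the ball $B_r(x_0)$ meets $\mathcal{S}$ in nearly full measure, and inside that intersection one has room for a zero-mean profile). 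As $r\to 0$, $K\ast\mu_r\approx K(0)\int\mu_r=0$ is the wrong heuristic; instead one must keep track of the second-order term. The cleaner route: since $K(0)-K(y)\sim c|y|$ or $\sim c|y|^2$ for small $y$ (strict decrease near $0$), one shows $\int\mu K\ast\mu\,dx = K(0)\big(\int\mu\big)^2 - \tfrac12\iint(K(0)-K(x-y))\mu(x)\mu(y)\,dx\,dy$, and with $\int\mu=0$ the first term drops, leaving $-\tfrac12\iint(K(0)-K(x-y))\mu(x)\mu(y)$. One then chooses $\mu$ so that this double integral is negative, i.e. so that mass of $\mu$ of the same sign is concentrated where $K(0)-K(x-y)$ is smallest — e.g. a positive spike and a negative spike placed very close together. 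With this, a scaling computation shows $\int\mu K\ast\mu/\int\mu^2$ can be made arbitrarily large, beating $\ep$.

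The main obstacle, and the place where the hypotheses are really used, is making the concentration argument compatible with the constraint $\operatorname{supp}\mu\subset\mathcal{S}$: a priori $\mathcal{S}$ could be a positive-measure set with no interval, so one cannot simply place a smooth bump inside it. This is handled by working at a Lebesgue density point $x_0$ of $\mathcal{S}$ and approximating: one takes a smooth test profile $\phi$ supported in $B_r(x_0)$ with $\int\phi=0$, and replaces it by $\mu=\phi\cdot\mathbf{1}_{\mathcal{S}}$ minus a small correction to restore zero mean; because the density of $\mathcal{S}$ in $B_r(x_0)$ tends to $1$, the $L^2$ norm and the quadratic form of $\mu$ converge to those of $\phi$, so the strict inequality $Q<0$ established for $\phi$ survives for $\mu$ once $r$ is small enough. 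Finally one checks admissibility — $\rho_i^\infty\pm\delta\mu\ge 0$ for small $\delta$, which holds because on a positive-measure subset of $\mathcal{S}$ both densities are bounded below by a positive constant (shrink $x_0$'s neighbourhood if needed) and $\mu\in L^\infty$ — and then $\delta$ small gives $\mF[\rho_1^\infty+\delta\mu,\rho_2^\infty-\delta\mu]<\mF[\rho_1^\infty,\rho_2^\infty]$, the desired contradiction. In higher dimensions the same argument works verbatim with balls in $\R^d$.
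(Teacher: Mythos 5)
Your overall strategy (perturb by $(\delta\mu,-\delta\mu)$ with $\mu$ zero-mean and supported in $\mathcal{S}$, use vanishing of the first variation and exactness of the quadratic expansion, and reduce to a sign condition on the second variation) is the same as the paper's, but there is a genuine computational error that derails the argument. When you assemble the quadratic form in the direction $(\mu,-\mu)$ the diffusion contributions are $\ep\int\mu^2-2\ep\int\mu^2+\ep\int\mu^2=0$, not $2\ep\int\mu^2$: this is forced, since the perturbation leaves $w=\rho_1^\infty+\rho_2^\infty$ unchanged and the $\ep$-term of $\mF$ depends only on $w$. The correct second variation is simply $-(\sigma_1+\sigma_2-2)\int\mu\, K\ast\mu\,dx$, so all you need is a zero-mean $\mu$ supported in $\mathcal{S}$ with $\int\mu\,K\ast\mu\,dx>0$. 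Because of the spurious $2\ep\int\mu^2$ term, you instead set yourself the task of making $\int\mu\,K\ast\mu\,dx/\int\mu^2\,dx$ exceed $2\ep/(\sigma_1+\sigma_2-2)$, and your claim that ``a scaling computation shows [this ratio] can be made arbitrarily large'' is false: by Plancherel, $\int\mu\,K\ast\mu\,dx\leq\|K\|_{L^1}\|\mu\|_{L^2}^2$, so the ratio is bounded by $\|K\|_{L^1}$, and under concentration of a zero-mean profile on a set of diameter $r$ one has $\left|\int\mu\,K\ast\mu\,dx\right|\leq\sup_{|z|\leq r}|K(0)-K(z)|\,\|\mu\|_{L^1}^2$ while $\|\mu\|_{L^2}^2\geq\|\mu\|_{L^1}^2/r$, so the ratio in fact tends to \emph{zero}. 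As written, your argument therefore fails whenever $\ep$ is large, whereas the theorem holds for all $\ep>0$.

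Once the $\ep$-term is (correctly) removed, the remaining step is exactly the paper's: take two disjoint pieces $B_\gamma(z_1)\cap\mathcal{O}$, $B_\gamma(z_2)\cap\mathcal{O}$ of positive measure inside a set $\mathcal{O}\subset\mathcal{S}$ where both densities are bounded below, and let $\mu$ be the difference of the normalized indicators. Then $\int\mu\,K\ast\mu\,dx\geq 2\bigl(\inf_{|z|<2\gamma}K(z)-\sup_{|z|>d-2\gamma}K(z)\bigr)>0$ for $\gamma$ small, using continuity, the strict radial decrease near $0$, and the uniqueness of the maximum at $0$. Note that the geometry here is two \emph{narrow} bumps kept at a \emph{fixed} positive distance $d$; your suggestion of ``a positive spike and a negative spike placed very close together'' points in the wrong direction, since if the separation also shrinks the cross term approaches the self terms and positivity is no longer guaranteed by the stated hypotheses. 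Your density-point device for placing the perturbation inside $\mathcal{S}$ is unnecessary (indicators of $B_\gamma(z_i)\cap\mathcal{O}$ suffice, as no smoothness of $\mu$ is required), though it is not wrong.
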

\proof
Assume that $\mathcal{S}$ has positive Lebesgue measure, then we can choose a subset $\mathcal{O}$ of positive measure and some constant $C > 0$ such that
$$ \rho_i^\infty(x) \geq C, \qquad i=1,2, ~\text{for almost every } x \in \mathcal{O}.$$
For a fixed $d > 0$ (e.g. half the diameter of $\mathcal{O}$) we can choose $z_1, z_2 \in \mathcal{O}$ with distance greater or equal
than $d$ and $0 < \gamma < {d}/4$ sufficiently small such that the balls $B_\gamma(z_i)$ of radius $\gamma$ around $z_1$ and $z_2$ intersected with $\mathcal{O}$ have positive Lebesgue measure
and are disjoint. Define
$$ u_1(x) = \left\{ \begin{array}{ll} {\vert B_\gamma(z_1) \cap \mathcal{O} \vert}^{-1} & \mbox{if~} x \in B_\gamma(z_1) \cap \mathcal{O}\\
-{\vert B_\gamma(z_2) \cap \mathcal{O} \vert}^{-1} & \mbox{if~} x \in B_\gamma(z_2) \cap \mathcal{O} \\ 0 & \mbox{else} \end{array} \right. $$
and $u_2 = - u_1$.
Since $u_1, u_2 \in L^2(\R)$ have mean zero, for $\delta > 0$ sufficiently
small, $\rho_i^\infty \pm \delta u_i$ is nonnegative and has the same mass as $\rho_i^\infty$. Hence $(\rho_1^\infty \pm \delta u_1, \rho_2^\infty \pm \delta u_2)$ is
admissible for the minimization of $\mF$ with $\delta << 1$. It is hence straightforward to see that at a minimizer the first variation of $\mF$ in direction $(u_1,u_2)$ vanishes. With
the homogeneity of $\mF$,  we find
$$ \mF[\rho_1^\infty+\delta u_1,\rho_2^\infty+ \delta u_2] = \mF[\rho_1^\infty,\rho_2^\infty] + \delta^2 \mF[u_1,u_2]. $$
Now, due to $u_1+u_2=0$ we obtain
\begin{align*} \mF[u_1,u_2] &= - \int_{\R^d} \left( \frac{\sigma_1}2  u_1 K*u_1 + \frac{\sigma_2}2  u_2 K*u_2 + u_1 K*u_2  \right)~dx \\ & = - \frac{1}2 (\sigma_1 + \sigma_2 -2) \int_{\R^d}u_1 K*u_1~dx .
\end{align*}
And we have
\begin{align*}\int_{\R^d}u_1 K*u_1~dx =& {\vert B_\gamma(z_1) \cap \mathcal{O} \vert^{-2}}  \int_{B_\gamma(z_1) \cap \mathcal{O}}\int_{B_\gamma(z_1) \cap \mathcal{O}} K(x-y) ~dx~dy  \\
& + {\vert B_\gamma(z_2) \cap \mathcal{O} \vert^{-2}}  \int_{B_\gamma(z_2) \cap \mathcal{O}}\int_{B_\gamma(z_2) \cap \mathcal{O}} K(x-y) ~dx~dy \\
& - \frac{2}{\vert B_\gamma(z_1) \cap \mathcal{O} \vert~\vert B_\gamma(z_2) \cap \mathcal{O} \vert}  \int_{B_\gamma(z_1) \cap \mathcal{O}}\int_{B_\gamma(z_2) \cap \mathcal{O}} K(x-y) ~dx~dy
 \\ \geq & 2 \left( \inf_{|z|< 2\gamma} K(z) - \sup_{|z|>d-2\gamma} K(z) \right).
\end{align*}
For $\gamma$ sufficiently small, the local strict radial decrease around zero implies that the infimum is larger than the supremum above (note that $2 \gamma < d - 2\gamma$) and thus, $ \int_{\R^d}u_1 K*u_1~dx $ is positive.
Hence, $\mF[u_1,u_2] < 0$ and $(\rho_1^\infty,\rho_2^\infty)$ cannot be a minimizer of $\mF$.
%
\endproof

In the case of one weaker interaction, e.g. $\sigma_2 < 1$ one can give an improved result on the support by following the proof of \cite{novaga_et_al}, where
only the attractive part of the energy with an upper bound on the total
density $\rho_1+\rho_2$ was considered. The idea is to write the energy as a functional of $\rho_1+\rho_2$ and $\rho_2$ and apply Riesz-Sobolev symmetrization to these functions in order to obtain states of lower energy. Since such a symmetrization leaves the $L^2$-norm of $\rho_1+\rho_2$ unchanged it can be applied directly to our case:
\begin{teo}
Assume that $K\in C(\R)$  is strictly radially decreasing. Let
$\sigma_1,\sigma_2>0$ with $\sigma_1+\sigma_2>2$ and $\sigma_2 < 1$.
Let $(\rho_1^\infty,\rho_2^\infty)$ be a global minimiser with compact support of
$\mF$ on $\mathcal{M}$ with $m_1 = m_2$. Then $\rho_1^\infty + \rho_2^\infty$ is radially symmetric around its center of mass $X_0$ and there exist $b > a > 0$ such that
\begin{equation}
	\mbox{supp}(\rho_1^\infty) = [X_0-a,X_0+a], \quad \mbox{supp}(\rho_2^\infty) = [X_0-b,X_0-a] \cup [X_0+a,X_0+b].
\end{equation}
\end{teo}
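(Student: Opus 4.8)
The plan is to adapt the Riesz--Sobolev symmetrization argument of \cite{novaga_et_al} so as to accommodate the porous-medium term. Writing $w=\rho_1+\rho_2$ and using $S_i=\sigma_iK$, I would first recast $\mF$ as a functional of the pair $(\rho_1,w)$: substituting $\rho_2=w-\rho_1$ into the nonlocal terms and using the symmetry of $K$ gives
\[
\mF[\rho_1,\rho_2]=\frac{\ep}{2}\int w^2\,dx-\frac{\sigma_2}{2}\int wK*w\,dx+(\sigma_2-1)\int\rho_1K*w\,dx+\Bigl(1-\tfrac{\sigma_1+\sigma_2}{2}\Bigr)\int\rho_1K*\rho_1\,dx .
\]
Since $\sigma_2<1$ and $\sigma_1+\sigma_2>2$ (which also forces $\sigma_1>1$, so $\rho_1$ is the strongly self-attracting species), all three coefficients multiplying the nonlocal quadratic forms are strictly negative; the sign of the cross term is exactly the place where the assumption $\sigma_2<1$ enters. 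Given a global minimizer $(\rho_1^\infty,\rho_2^\infty)$, I would compare it with $\bigl((\rho_1^\infty)^*,\,w^*-(\rho_1^\infty)^*\bigr)$, where $\cdot^*$ denotes symmetric decreasing rearrangement. Since $\rho_1^\infty\le w$ pointwise we have $(\rho_1^\infty)^*\le w^*$, so this pair lies in $\mathcal{M}$ with the same masses; rearrangement preserves $\|w\|_{L^2}$ (hence the diffusion term), while the Riesz rearrangement inequality does not decrease any of the three nonlocal integrals, so, all three coefficients being negative, $\mF\bigl[(\rho_1^\infty)^*,w^*-(\rho_1^\infty)^*\bigr]\le\mF[\rho_1^\infty,\rho_2^\infty]$.

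As $(\rho_1^\infty,\rho_2^\infty)$ minimizes $\mF$ over $\mathcal{M}$, this is in fact an equality; since the three nonlocal coefficients are nonzero, each of the Riesz inequalities must hold with equality separately, in particular for $\int\rho_1^\infty K*w$ (and for $\int wK*w$). At this point I would invoke the characterization of the equality cases in Riesz's rearrangement inequality: because $K$ is \emph{strictly} radially decreasing and neither $\rho_1^\infty$ nor $w$ is trivial, equality forces $\rho_1^\infty$ and $w$ to agree with their symmetric decreasing rearrangements up to \emph{one and the same} translation vector $X_0$. Hence $w=\rho_1^\infty+\rho_2^\infty$ is symmetric decreasing about $X_0$ --- so $X_0$ is its center of mass --- and $\rho_1^\infty$ is symmetric decreasing about the same $X_0$.

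It then remains to read off the supports. A compactly supported symmetric decreasing function on $\R$ has support an interval centered at its symmetry point, and is strictly positive on the interior of that interval; hence $\mbox{supp}(\rho_1^\infty)=[X_0-a,X_0+a]$ with $a>0$ (using $m_1>0$) and $\rho_1^\infty>0$ on $(X_0-a,X_0+a)$, while $\mbox{supp}(w)=[X_0-b,X_0+b]$ with $b\ge a$ (since $\rho_1^\infty\le w$). Applying the previous theorem --- valid here, as $K$ is strictly radially decreasing and $\sigma_1+\sigma_2>2$ --- the set $\mbox{supp}(\rho_1^\infty)\cap\mbox{supp}(\rho_2^\infty)$ has zero Lebesgue measure, so $\rho_2^\infty=0$ a.e.\ on $(X_0-a,X_0+a)$; there $w=\rho_1^\infty$, whereas on $\R\setminus[X_0-a,X_0+a]$ one has $\rho_1^\infty=0$ and therefore $\rho_2^\infty=w$. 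Combined with $\mbox{supp}(w)=[X_0-b,X_0+b]$ this yields $\mbox{supp}(\rho_2^\infty)=[X_0-b,X_0-a]\cup[X_0+a,X_0+b]$; moreover $b>a$, for otherwise $\rho_2^\infty$ would be supported on a null set, contradicting $\int\rho_2^\infty=m_2>0$.

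The delicate point is the rigorous application of the equality cases of Riesz's rearrangement inequality: one must verify that strict monotonicity of $K$, together with the non-degeneracy of $\rho_1^\infty$ and $w$, really forces a single common translation for both functions. Should one wish to avoid the two-function equality case, one can instead (i) use the one-function equality case for $\int wK*w$ to center $w$ at some $X_0$, and then (ii) keeping $w$ fixed and recentered, rearrange only $\rho_1^\infty$ about $X_0$ --- still admissible since $(\rho_1^\infty)^*(\cdot-X_0)\le w$ --- and read off the symmetry of $\rho_1^\infty$ from the equality case of the Hardy--Littlewood inequality applied to $\int\rho_1^\infty(K*w)$, using that $K*w$ is strictly radially decreasing about $X_0$.
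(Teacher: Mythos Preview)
Your argument is correct and is essentially the Riesz--Sobolev symmetrization strategy the paper sketches (following \cite{novaga_et_al}): rewrite the energy in the variables $(\rho_1,w)$---the paper says $(w,\rho_2)$, but your choice is the one that makes all three nonlocal coefficients negative when $\sigma_2<1$---symmetrize, and use that the $L^2$-norm of $w$ is preserved. Your treatment of the equality cases of the Riesz inequality and the subsequent support analysis via the preceding segregation theorem spells out details the paper leaves to the reader.
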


Of course the phase separation does not mean that the supports are at positive distance, due to the attractive cross interactions we expect that the supports are glued together. This is indeed true in general for stationary solutions.  The proof is the same
as the corresponding result for a single species in \cite[Lemma 4.1]{franek}, we hence only provide a sketch of the proof.

\begin{prop}
Let $\sigma_1, \sigma_2 > 0$, $K$ be strictly decreasing with respect to the radial variable and $\left(\rho_1^\infty,\rho_2^\infty \right)\in \mathcal{M} \cap BV(\R)^2$ be a weak solution of \eqref{eq.stationary} according to Definition \ref{def:stationary}. Then $supp(\rho_1^\infty+\rho_2^\infty)$ is a connected interval.
\end{prop}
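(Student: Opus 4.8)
The plan is to argue by contradiction, following the one-species argument of \cite[Lemma 4.1]{franek}. Write $w:=\rho_1^\infty+\rho_2^\infty$ and suppose $\mathrm{supp}(w)$ is not connected. Being closed, it then has a genuine gap: there are $a<b$ with $a,b\in\mathrm{supp}(w)$ and $(a,b)\cap\mathrm{supp}(w)=\emptyset$. Since $w$ is Lipschitz, nonnegative and vanishes a.e.\ on $(a,b)$, it vanishes pointwise on $[a,b]$; and $w(x)\to 0$ as $x\to\pm\infty$ because $w\in L^1(\R)\cap\mathrm{Lip}(\R)$. Moreover $\rho_1^\infty,\rho_2^\infty$ vanish a.e.\ on $(a,b)$, while $a,b\in\mathrm{supp}(w)$ forces $w\not\equiv 0$ on $(-\infty,a)$ and $w\not\equiv 0$ on $[b,+\infty)$; we may also assume $m_1,m_2>0$, since otherwise the claim reduces to the single-species case treated in \cite{franek}.

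\emph{Pointwise flux identity.} By Definition \ref{def:stationary}, for $j\neq i$ the function $\rho_i^\infty\big(\ep w_x-S_i'\ast\rho_i^\infty-K'\ast\rho_j^\infty\big)$ has vanishing distributional derivative on $\R$, hence is a.e.\ equal to a constant. Now $\rho_i^\infty\in BV(\R)\subset L^\infty(\R)$, and $S_i',K'\in L^1(\R)$ by (A1)--(A3) (they are continuous and, being the derivatives of nonnegative radially decreasing integrable functions, integrable), so the convolutions are bounded; since also $w_x\in L^\infty(\R)$ and $\rho_i^\infty\in L^1(\R)$, that product lies in $L^1(\R)$, and therefore the constant is $0$. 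Multiplying out, using $S_i=\sigma_i K$, adding the two identities and using $ww_x=\tfrac12(w^2)_x$ yields, a.e.\ on $\R$,
\[
\frac{\ep}{2}\,(w^2)_x=\sigma_1\rho_1^\infty\,(K'\ast\rho_1^\infty)+\sigma_2\rho_2^\infty\,(K'\ast\rho_2^\infty)+\rho_1^\infty\,(K'\ast\rho_2^\infty)+\rho_2^\infty\,(K'\ast\rho_1^\infty).
\]

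\emph{Integration across the gap.} Set $\rho_i^{\mathrm{L}}:=\rho_i^\infty\mathbf{1}_{(-\infty,a)}$ and $\rho_i^{\mathrm{R}}:=\rho_i^\infty\mathbf{1}_{[b,+\infty)}$, so $\rho_i^\infty=\rho_i^{\mathrm{L}}+\rho_i^{\mathrm{R}}$ a.e. Integrating the identity above over $(-\infty,a)$, the left-hand side gives $\tfrac{\ep}{2}w(a)^2=0$. On the right, expand each $\int_{-\infty}^a\rho_i^\infty(K'\ast\rho_j^\infty)\,dx$ as a double integral over $\{x<a\}\times\R$ and split $\rho_j^\infty=\rho_j^{\mathrm{L}}+\rho_j^{\mathrm{R}}$. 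In every resulting term with both variables in $(-\infty,a)$ the kernel $K'(x-y)$ is odd whereas the density factor is symmetric under $x\leftrightarrow y$, so those contributions cancel; only "left $\times$ right" terms survive:
\[
0=\iint_{x<a,\,y\ge b}K'(x-y)\Big(\sigma_1\rho_1^{\mathrm{L}}(x)\rho_1^{\mathrm{R}}(y)+\sigma_2\rho_2^{\mathrm{L}}(x)\rho_2^{\mathrm{R}}(y)+\rho_1^{\mathrm{L}}(x)\rho_2^{\mathrm{R}}(y)+\rho_2^{\mathrm{L}}(x)\rho_1^{\mathrm{R}}(y)\Big)dx\,dy.
\]
For $x<a<b\le y$ one has $x-y<0$, so by strict radial monotonicity $K'(x-y)>0$. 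Hence the integrand is a sum of nonnegative terms whose vanishing forces each term to vanish a.e.; by Tonelli this gives, for each $i$, that $\rho_i^{\mathrm{L}}\equiv 0$ or $\rho_i^{\mathrm{R}}\equiv 0$, and also $\rho_1^{\mathrm{L}}\equiv 0$ or $\rho_2^{\mathrm{R}}\equiv 0$, and $\rho_2^{\mathrm{L}}\equiv 0$ or $\rho_1^{\mathrm{R}}\equiv 0$. Since $\rho_1^{\mathrm{L}}+\rho_2^{\mathrm{L}}\not\equiv 0$, at least one $\rho_i^{\mathrm{L}}$ is not identically zero, and the two alternatives involving it then force $\rho_1^{\mathrm{R}}\equiv\rho_2^{\mathrm{R}}\equiv 0$, i.e.\ $w\equiv 0$ a.e.\ on $[b,+\infty)$ --- contradicting $b\in\mathrm{supp}(w)$. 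Thus $\mathrm{supp}(w)$ has no gap, so it is a connected interval.

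The delicate points are all technical. The weak formulation yields only the \emph{a.e.}\ flux identity, so one must justify that the per-component flux constant vanishes --- this is precisely where $w\in\mathrm{Lip}(\R)$, $\rho_i^\infty\in BV(\R)\subset L^\infty(\R)$ and the $L^1$-integrability of $S_i',K'$ coming from (A1)--(A3) are used --- and one must be a little careful with the bookkeeping of the double integrals when $\mathrm{supp}(w)$ has several components; this is handled cleanly by splitting the densities at the single chosen gap $(a,b)$ rather than enumerating clusters.
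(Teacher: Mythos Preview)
Your proof is correct and follows the same route as the paper's sketch: integrate the (summed) stationary flux identity across a hypothetical gap $(a,b)$, cancel the same-side contributions via the oddness of $K'$, and use the sign of $K'(x-y)$ for $x<a<b\le y$ to force one side of the support to vanish. The paper phrases the first step variationally---differentiating $\mathcal{F}$ along a transport by a velocity field $V\equiv\pm 1$ on either side of the gap---rather than via your more direct pointwise flux identity together with the $L^1$-argument fixing the constant to zero, but the resulting integral identity and the contradiction are the same.
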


\begin{proof-sketch}
Let us just mention the main idea of the proof: Assume there exists an interval $[a,b]$ with $a < b$ not in the support of $\rho_1^\infty+\rho_2^\infty$, but such that the intersections of $(-\infty,a)$ and $(b,+\infty)$ with the support of $\rho_1^\infty+\rho_2^\infty$ are both nonempty. Then we can construct a smooth velocity $V$ field equal to $+1$ on $(-\infty,a)$ and $-1$ on $(b,\infty)$. Consider then the infinitesimal change of the energy on the curve $(u_1(s,x),u_2(s,x))$ solving (locally) the Cauchy problem
\begin{equation*}
  \begin{cases}
    \partial_s u_i + \partial_x (u_i V)=0 & \\
    u_i(0,x)=\rho_i^\infty(x) &
  \end{cases}
\end{equation*}
for $i=1,2$. Since $\frac{d}{ds} \mathcal{F}[u_1(s),\rho_2^\infty]|_{s=0} = \frac{d}{ds} \mathcal{F}[\rho_1^\infty,u_2(s)]|_{s=0} = 0$ in view of the fact that $\left(\rho_1^\infty,\rho_2^\infty \right)$ is a stationary state according to Definition \ref{def:stationary}, due to the computation
\begin{align*}
   & 0=\frac{\epsilon}{2}\int_{-\infty}^a\partial_x(\rho_1^\infty + \rho_2^\infty)^2 dx - \frac{\epsilon}{2}\int_{b}^{+\infty}\partial_x(\rho_1^\infty + \rho_2^\infty)^2 dx,
\end{align*}
with a few manipulations we obtain
\begin{align*}
   & \int_{-\infty}^a \rho_1^\infty S_1'\ast \rho_1^\infty dx + \int_{-\infty}^a \rho_1^\infty K'\ast \rho_2^\infty dx + \int_{-\infty}^a \rho_2^\infty S_2'\ast \rho_2^\infty dx \\
    & \ + \int_{-\infty}^a \rho_2^\infty K'\ast \rho_1^\infty dx = \int_{b}^{+\infty} \rho_1^\infty S_1'\ast \rho_1^\infty dx \\
    & \ +\int_{b}^{+\infty}  \rho_1^\infty K'\ast \rho_2^\infty dx + \int_{b}^{+\infty} \rho_2^\infty S_2'\ast \rho_2^\infty dx + \int_{b}^{+\infty} \rho_2^\infty K'\ast \rho_1^\infty dx.
\end{align*}
Now, since all the involved kernels $S_1, S_2, K$ are even, with a few manipulations similar to those in \cite[Lemma 4.1]{franek}, we get that the support of $\rho_1^\infty + \rho_2^\infty$ must be empty in at least one between $(-\infty,a)$ and $(b,+\infty)$, which is a contradiction. This proves that $(\rho_1^\infty,\rho_2^\infty)$ cannot be a stationary state according to Definition \ref{def:stationary}.
\end{proof-sketch}

We can provide the same result for an arbitrary minimizer of the interaction energy with a similar proof, respectively the Lagrangian version:
\begin{prop}
Let $\sigma_1, \sigma_2 > 0$, $K$ be strictly decreasing with respect to the radial variable and $\left(\rho_1^\infty,\rho_2^\infty \right)\in \mathcal{M}$ be a minimizer of $\mF$. Then $supp(\rho_1^\infty+\rho_2^\infty)$ is a connected interval.
\end{prop}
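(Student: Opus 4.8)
The plan is to argue by contradiction with a rigid-translation competitor; this is in fact simpler than the analogous statement for stationary solutions above, since being a \emph{minimizer} lets us use a finite perturbation rather than an infinitesimal one. Suppose $\mathrm{supp}(w^\infty)$, with $w^\infty:=\rho_1^\infty+\rho_2^\infty$, is not a connected interval. As in the previous sketch, there are then $a<b$ with $(a,b)\cap\mathrm{supp}(w^\infty)=\emptyset$ while both $(-\infty,a)$ and $(b,+\infty)$ meet $\mathrm{supp}(w^\infty)$; in particular $L_-:=\{x<a:\ w^\infty(x)>0\}$ and $L_+:=\{y>b:\ w^\infty(y)>0\}$ both have positive Lebesgue measure. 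Split $\rho_i^\infty=\rho_i^-+\rho_i^+$ with $\rho_i^-:=\rho_i^\infty\mathbf 1_{(-\infty,a)}$ and $\rho_i^+:=\rho_i^\infty\mathbf 1_{(b,+\infty)}$ (nothing is lost on $[a,b]$, where $\rho_i^\infty=0$ a.e.), and for $0<\tau<b-a$ put $\tilde\rho_i(x):=\rho_i^-(x-\tau)+\rho_i^+(x)$. Since translation preserves mass, nonnegativity and $L^1\cap L^2$ norms, and $\mathrm{supp}(\rho_i^-(\cdot-\tau))\subset(-\infty,a+\tau)$ stays disjoint from $\mathrm{supp}(\rho_i^+)\subset(b,+\infty)$, the pair $(\tilde\rho_1,\tilde\rho_2)$ lies in $\mathcal M$.

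Next I compare the energies. Writing $S_i=\sigma_i K$ and $\rho:=(\rho_1,\rho_2)$, after symmetrizing one has $\mathcal{F}[\rho_1,\rho_2]=\tfrac{\epsilon}{2}\int(\rho_1+\rho_2)^2\,dx-\tfrac12\iint K(x-y)\,Q(\rho(x),\rho(y))\,dx\,dy$, where $Q(u,v):=\sigma_1 u_1v_1+\sigma_2 u_2v_2+u_1v_2+u_2v_1$ is symmetric in $(u,v)$ and has nonnegative coefficients. For the competitor: (i) $\tilde w:=\tilde\rho_1+\tilde\rho_2$ is again a sum of two pieces with disjoint supports, so $\int\tilde w^2=\int(w^-)^2+\int(w^+)^2=\int(w^\infty)^2$ and the diffusion term is unchanged; (ii) by translation invariance the self-interaction of the left piece with itself and the self-interaction of the right piece with itself do not change; (iii) hence only the left--right cross term moves, and a short computation gives
\[
\mathcal{F}[\tilde\rho_1,\tilde\rho_2]-\mathcal{F}[\rho_1^\infty,\rho_2^\infty]=-\iint\big(K(x+\tau-y)-K(x-y)\big)\,Q\big(\rho^-(x),\rho^+(y)\big)\,dx\,dy.
\]

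Finally I turn this into a strict decrease. For $x$ in $\mathrm{supp}(\rho^-)\subset(-\infty,a)$, $y$ in $\mathrm{supp}(\rho^+)\subset(b,+\infty)$ and $0<\tau<b-a$ one has $0<y-x-\tau<y-x$, so the strict radial decrease of $K$ forces $K(x+\tau-y)-K(x-y)>0$ pointwise on the whole integration domain. Moreover $Q(\rho^-(x),\rho^+(y))\ge0$ everywhere and is $>0$ whenever $\rho^-(x)\neq0$ and $\rho^+(y)\neq0$ (since $\sigma_1,\sigma_2>0$ and the densities are nonnegative, at least one of the four products is positive), i.e.\ for all $(x,y)\in L_-\times L_+$, a set of positive product measure. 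Thus the double integral is strictly positive, so $\mathcal{F}[\tilde\rho_1,\tilde\rho_2]<\mathcal{F}[\rho_1^\infty,\rho_2^\infty]$, contradicting minimality; hence $\mathrm{supp}(w^\infty)$ is a connected interval. One could equivalently run the Lagrangian variant of the previous proposition: take $V\in C^\infty$ with $V\equiv1$ on $(-\infty,a)$ and $V\equiv-1$ on $(b,+\infty)$, solve $\partial_s u_i+\partial_x(u_iV)=0$ with $u_i(0,\cdot)=\rho_i^\infty$, and verify $\frac{d}{ds}\mathcal{F}[u_1(s),u_2(s)]\big|_{s=0}<0$, in conflict with the first-order minimality condition. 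The one delicate point, and it is a mild one, is bookkeeping rather than an idea: making sure the translated competitor stays admissible and that $\tfrac{\epsilon}{2}\int w^2$ is \emph{exactly} preserved --- which is precisely where the gap $(a,b)$ is used, to keep the two translated pieces from overlapping --- and pinning down the positive-measure set $L_-\times L_+$ to secure the strict inequality.
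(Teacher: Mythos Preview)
Your proof is correct and follows essentially the same strategy as the paper's: build a competitor by rigidly sliding the pieces of $w^\infty$ across the gap $(a,b)$, observe that the quadratic diffusion term $\tfrac{\epsilon}{2}\int w^2$ is exactly preserved because the translated pieces stay disjoint, and then use the strict radial decrease of $K$ to see that every attractive interaction term strictly decreases. The only cosmetic difference is that the paper moves \emph{both} pieces symmetrically toward each other by $d<\tfrac{b-a}{2}$, while you move only the left piece by $\tau<b-a$; your version is more explicit about the bookkeeping (the form $Q$, the positive-measure set $L_-\times L_+$) and hence slightly cleaner, but the underlying idea is identical.
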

\begin{proof}
Suppose that the difference between $supp(\rho_1^\infty+\rho_2^\infty)$ and its convex hull contains an interval $[a,b]$. Let $0<d<\frac{b-a}2$ and define $(\tilde \rho_1,\tilde \rho_2) \in \mathcal{M}$ via
$$ \tilde \rho_i(x) = \left\{  \begin{array}{ll} \rho_i^\infty(x-d) & \text{if } x < a+d  \\
\rho_i^\infty(x+d) & \text{if } x >b-d \\
\rho_i^\infty(x) & \text{else }.
\end{array} \right. $$
Then we have
$$\int_\R (\tilde \rho_1+\tilde \rho_2)^2 ~dx = \int_\R (\rho_1^\infty+\rho_2^\infty)^2 ~dx, $$
while the attractive interaction terms in the energy are smaller for $(\tilde \rho_1,\tilde \rho_2)$. Hence, $\mF[\tilde \rho_1,\tilde \rho_2]   < \mF[\rho_1^\infty,\rho_2^\infty ] $, which is a contradiction.
\end{proof}

Despite the result of compact support for $\rho_1^\infty+\rho_2^\infty$, which
holds for arbitrary positive interactions, we expect mixing in case of dominant cross-attraction, i.e. $\sigma_1 <1$ and $\sigma_2 < 1$ similar to \cite{novaga_et_al}.

Note that Proposition 3.3 is not a special case of Proposition 3.2, since we cannot argue that $\rho_1^\infty+\rho_2^\infty$ are Lipschitz-continuous, hence we cannot apply the result Proposition 3.1. In order to do so we will need to verify the Lipschitz property, for which Proposition 3.3 appears to be crucial.
In this direction we now provide a result closing the gap between energy minimizers and
stationary solutions. At least for $\sigma_1+\sigma_2 > 2$ and $\sigma_2 < 1$ the assumptions of the following theorem are satisfied:
\begin{teo}
Assume $K\in C^1(\R)$  to be strictly radially decreasing. Let
$\sigma_1,\sigma_2>0$ and let $(\rho_1^\infty,\rho_2^\infty)$ be a global minimiser with compact support of
$\mF$ on $\mathcal{M}$.
Moreover, assume that for some $M > 1$ there exist $a_1 < a_2 < \ldots < a_M$
such that supp$(\rho_1^\infty + \rho_2^\infty) = \cup_{i=1}^{M-1} [a_i,a_{i+1}]$ and that for each $i=1,\ldots,M-1$ we have
\begin{equation}
	\rho_k^\infty > 0, \rho_\ell^\infty=0 \text{ on } (a_i,a_{i+1}),
\end{equation}
with $\{k,\ell\} = \{1,2\}$. Then $\rho_1^\infty + \rho_2^\infty$ is Lipschitz-continuous.
\end{teo}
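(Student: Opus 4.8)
The plan is to read off from the minimality of $(\rho_1^\infty,\rho_2^\infty)$ the first-order optimality (Karush--Kuhn--Tucker) conditions, and then combine them with the elementary fact that the convolution of a compactly supported $L^1$ density with the $C^1$ kernel $K$ is itself $C^1$. Recall from the computation in the proof of Proposition~\ref{prop:stationary} that the first variation of $\mF$ in the direction $(\mu,0)$ equals $\int_\R \mu\,\bigl(\ep w - S_1\ast\rho_1^\infty - K\ast\rho_2^\infty\bigr)\,dx$ (and symmetrically for $(0,\nu)$), where $w=\rho_1^\infty+\rho_2^\infty$. Since $(\rho_1^\infty,\rho_2^\infty)$ minimizes $\mF$ over $\mathcal{M}$, I would invoke a standard Lagrange-multiplier argument, using the mass constraints $\int\rho_i=m_i$ and the sign constraints $\rho_i\ge 0$, to produce constants $\lambda_1,\lambda_2\in\R$ such that, for $i=1,2$,
\[
g_i:=\ep w - S_i\ast\rho_i^\infty - K\ast\rho_{3-i}^\infty \;\ge\; \lambda_i \ \text{ a.e.\ on }\R,\qquad g_i=\lambda_i \ \text{ a.e.\ on }\{\rho_i^\infty>0\}.
\]
Since $K\in C^1(\R)$, $S_i=\sigma_i K$, and the densities lie in $L^1$ with compact support, each $S_i\ast\rho_j^\infty$ and $K\ast\rho_j^\infty$ is $C^1$ on $\R$; hence $g_i$ differs from $\ep w$ by a $C^1$ function.

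\textbf{Step 1: shape and a uniform derivative bound on each subinterval.} Fixing $i$ and letting $k=k_i$ be the species positive on $I_i:=(a_i,a_{i+1})$, we have $w\equiv\rho_k^\infty$ there, and the equality $g_k=\lambda_k$ reads $\ep\,w=\lambda_k+S_k\ast\rho_k^\infty+K\ast\rho_{3-k}^\infty$ a.e.\ on $I_i$; after modifying $w$ on a null set, $w$ is $C^1$ on $\overline{I_i}$ with $\ep\,w'=S_k'\ast\rho_k^\infty+K'\ast\rho_{3-k}^\infty$ on $I_i$. Because $K'$ is continuous and all densities are supported in the fixed compact set $[a_1,a_M]$, there is a constant $L$ independent of $i$ with $\|w'\|_{L^\infty(I_i)}\le L$, while $w\equiv 0$ outside $[a_1,a_M]$. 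It will therefore suffice to show that $w$ is continuous at the nodes $a_1,\dots,a_M$, since a function that is continuous on $\R$ and $C^1$ with a common derivative bound on each piece of a finite partition is globally Lipschitz.

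\textbf{Step 2: continuity at the nodes.} At an interior node $a_i$ with $2\le i\le M-1$: if the same species is active on $I_{i-1}$ and $I_i$, then $g$ for that species equals its multiplier on a full neighbourhood of $a_i$, and continuity of the convolutions gives continuity of $w$. If species $1$ is active on $I_{i-1}$ and species $2$ on $I_i$, I would pass to one-sided limits at $a_i$ in the equality $g_2=\lambda_2$ (valid on $I_i$) and in the inequality $g_2\ge\lambda_2$ (valid on $I_{i-1}$, where $\rho_2^\infty\equiv 0$); all terms extend continuously up to $a_i$, so this gives $\ep\,w(a_i^-)\ge\lambda_2+S_2\ast\rho_2^\infty(a_i)+K\ast\rho_1^\infty(a_i)=\ep\,w(a_i^+)$, and the symmetric argument with species $1$ gives the reverse inequality, hence $w(a_i^-)=w(a_i^+)$. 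At the outer node $a_1$ (and likewise $a_M$): writing $k=k_1$, I would use $g_k\ge\lambda_k$ at points $x<a_1$, where $w(x)=0$ and $\rho_k^\infty(x)=0$, and let $x\to a_1^-$ to obtain $\ep\,w(a_1^+)=\lambda_k+S_k\ast\rho_k^\infty(a_1)+K\ast\rho_{3-k}^\infty(a_1)\le 0$; since $w\ge 0$ this forces $w(a_1^+)=0=w(a_1^-)$. Combined with Step~1, this shows $w$ is continuous on $\R$ and piecewise $C^1$ with uniformly bounded derivative, hence Lipschitz.

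\textbf{The main obstacle.} The hard part will be Step~1, i.e.\ upgrading the first-variation identity to the pointwise KKT system: one must construct admissible (mass-preserving, sign-preserving) variations that are one-sided near $\partial\,\mathrm{supp}(w)$ and across the junctions, and verify that the multiplier attached to the single mass constraint of species $i$ is the \emph{same} constant $\lambda_i$ on every subinterval on which species $i$ is active. Once those optimality conditions are secured, the remainder is soft --- the matching of one-sided limits together with the $C^1$-regularity of convolution against $K$.
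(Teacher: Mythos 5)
Your strategy is viable, and on the open subintervals it coincides with the paper's: the Euler--Lagrange identity $\ep w=\lambda_k+S_k\ast\rho_k^\infty+K\ast\rho_{3-k}^\infty$ on $(a_i,a_{i+1})$, together with $K\in C^1$ and the compact supports, gives a Lipschitz bound uniform in $i$, so everything reduces to continuity of $w$ at the finitely many nodes. Where you genuinely diverge is at the nodes. You propose to read continuity off the full KKT system --- $g_i=\lambda_i$ on $\{\rho_i^\infty>0\}$ with a \emph{single global} multiplier and $g_i\ge\lambda_i$ elsewhere --- by matching one-sided limits (equality from the active species, inequality from the inactive one, in both directions at an interior node; the sign argument at $a_1$ and $a_M$). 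The paper never writes down the inequality part of the optimality conditions at all: at each node it builds an explicit mass-preserving perturbation ($-\delta_1$ on one side of the node, $+\delta_1$ on the other, over a window of width $\delta_2$) and expands the energy, finding that the quadratic diffusion term decreases by $\ep\delta_1\delta_2$ times the jump of $w$ at that node, up to $\mathcal{O}(\delta_1^2\delta_2+\delta_1\delta_2^2)$; a jump therefore contradicts minimality directly. Your limit-matching argument is clean and correct \emph{conditional on} the KKT system.

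The caveat is that the step you defer as ``the main obstacle'' is precisely the load-bearing one, and it cannot be outsourced to a generic Lagrange-multiplier theorem: establishing that $\lambda_i$ is the \emph{same} constant on every connected component of $\{\rho_i^\infty>0\}$, and that $g_i\ge\lambda_i$ off the support (including outside $[a_1,a_M]$), requires constructing one-sided, two-bump, mass-preserving variations and comparing energies --- which is essentially the perturbation the paper performs explicitly at the nodes. So your proposal does not avoid that construction; it relocates it into an unproven lemma. For a global minimiser this lemma is true and provable by standard arguments (test with a positive bump at a Lebesgue point where $\rho_i^\infty=0$ against a negative bump where $\rho_i^\infty>0$), so your route closes once you carry it out; as written, the proof is incomplete exactly there. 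A minor point in your favour: the paper's own interior identity omits the additive constant (it tests with non-zero-mean $\varphi$), which is harmless for the Lipschitz bound, but your version with the multiplier is the correct statement.
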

\begin{proof}
Let $i$ be such that $k=1$ and $\ell=2$ (the opposite case is analogous). Moreover, let $\varphi \in C(\R)$ be an arbitrary continuous function supported in a compact subset $(a_i,a_{i+1})$. Then by standard arguments on the first variation we see that $\mF[\varphi,\rho_2^\infty]=0$, which implies
$$ \rho_1^\infty + \rho_2^\infty = \rho_1^\infty = \frac{1}\ep\int_R K(x-y) [\sigma_1 \rho_1^\infty(y) + \rho_2^\infty(y)] ~dy , $$
on $(a_i,a_{i+1})$. The continuous differentiability of $K$ implies that $\rho_1^\infty+\rho_2^\infty$ is Lipschitz continuous (with modulus independent of $i$) on $(a_i,a_{i+1})$. Hence, it suffices to show that $\rho_1^\infty+\rho_2^\infty$ is continuous on the finite set of points $\{a_i\}$ in order to obtain Lipschitz continuity on $\R$. Due to the uniform Lipschitz continuity on the subintervals, the left- and right-sided limits exist for each subinterval.

First, let $\rho_k^\infty > 0$ in $(a_1,a_2)$ and assume $\lim_{x\rightarrow a_1^+} \rho_k^\infty(x) > 0$. For $\delta_1, \delta_2 > 0$ being sufficiently small define
$$ \tilde \rho_k(x) = \left\{  \begin{array}{ll} \rho_k^\infty - \delta_1 & \text{for } x\in (a_1,a_1+\delta_2) \\ \rho_k^\infty + \delta_1 & \text{for } x\in (a_1,a_1-\delta_2) \\ \rho_k^\infty(x) &\text{else} \end{array} \right. \qquad \text{and} \qquad \tilde \rho_\ell = \rho_\ell^\infty.$$ Then it is straight-forward to show that
$$ \mF[\tilde \rho_1,\tilde \rho_2] = \mF[\rho_1^\infty,\rho_2^\infty] - \ep \delta_1 \delta_2 \lim_{x\rightarrow a_1^+} \rho_k^\infty(x) + \mathcal{O}(\delta_1^2 \delta_2 + \delta_1 \delta_2^2), $$
which strictly smaller than $\mF[\rho_1^\infty,\rho_2^\infty]$ for $\delta_1, \delta_2$ sufficiently small and hence a contradiction to $(\rho_1^\infty,\rho_2^\infty)$ being a minimiser. Thus,
$$ \lim_{x\rightarrow a_1^+} (\rho_1^\infty(x)+\rho_2^\infty(x)) = \lim_{x\rightarrow a_1^+} \rho_k^\infty(x) = 0. $$
A completely analogous proof shows continuity at $x=a_M$. Finally consider
the limit at $x=a_i$, $1<i<M$, if $ \lim_{x\rightarrow a_i^+} \rho_k^\infty(x) = \lim_{x\rightarrow a_i^+} \rho_\ell^\infty(x)$. Define a similar perturbation
in a neighbourhood $(a_i-\delta_2,a_i+\delta_2)$ increasing the smaller and
decreasing the larger density. With the same kind of expansion as at $a_1$ one
can obtain a state with lower energy, which is a contradiction.
\end{proof}

\section{Existence of segregated states via Krein-Rutman theorem}\label{sec:krein}

Hinted by the results in the previous section, we now provide reasonable
sufficient conditions for the existence of segregated stationary states for
system \eqref{eq.stationary}. We remark that the system \eqref{eq.stationary} is translation invariant. Therefore, we shall fix the center of mass to zero for simplicity.

\begin{defin}\label{sgr}
We call a stationary solution $(\rho_1,\rho_2)$ to \eqref{eq.stationary} according to Definition \ref{def:stationary} a \emph{symmetric segregated
steady state} if there exist $L_1, L_2>0$ such that
\begin{equation*}
\mbox{supp}(\rho_1)=[-L_{1},L_{1}]=:I_{1} \quad \mbox{and} \quad
\mbox{supp}(\rho_2)=[-L_2,-L_1]\cup [L_{1},L_{2}]=:I_{2} \, ,
\end{equation*}
where $\rho_1, \rho_2$ are even functions, and $C^1$-regular inside their
respective supports, such that $w=\rho_1+\rho_2$ is monotone decreasing on
$[0,L_2]$ with $w(L_2)=0$.
\end{defin}

Recall that Definition \ref{def:stationary} prescribes
$w(x):= \rho_1(x)+\rho_2(x)$ being Lipschitz continuous on $\R$.


Our strategy to prove existence of segregated steady states is a non-trivial extension of the strategy proposed in \cite{franek}, and adapted in
\cite{burger_fetecau_huang}. The main idea is to fix $L_1$ and $L_2$ and then
look at the stationary equations for $\rho_1$ and $\rho_2$ as eigenvalue
conditions for a suitable integral operator. The existence of the eigenvectors
will be proven by using the strong version of the Krein-Rutman theorem.

\begin{teo}[Krein-Rutman]
\label{K-R}
Let $X$ be a Banach space, $K\subset X$ be a solid cone, such that
$\lambda K\subset K$ for all $\lambda\geq0$ and $K$ has a nonempty interior
$K^{o}$. Let $T$ be a compact linear operator on X, which is strongly positive with respect to $K$, i.e. $T[u]\in K^{o}$ if $u\in K\setminus\{0\}$. Then,
\begin{itemize}
\item [(i)] the spectral radius $r(T)$ is strictly positive and $r(T)$ is a simple eigenvalue with an eigenvector $v\in K^{o}$. There is no other eigenvalue with a corresponding eigenvector $v\in K$.
\item [(ii)] $|\lambda|<r(T)$ for all other eigenvalues $\lambda\neq r(T)$.
\end{itemize}
\end{teo}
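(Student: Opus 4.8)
The statement is the classical strong Krein--Rutman theorem, so the plan is to reconstruct its proof, treating $K$ as a closed, proper, normal solid cone (as in the applications below) and using Riesz--Schauder theory for the compact operator $T$, the geometry of $K$, and a duality argument on $T^{*}$. \textbf{Step 1 (positivity of the spectral radius).} Fix $u_0\in K^{o}$; comparing $u_0$ and $Tu_0$ with small balls contained in $K$ around each of them produces $m,M>0$ with $Tu_0-mu_0\in K$ and $Mu_0-Tu_0\in K$, and since $T$ maps $K$ into $K$, iteration gives $T^{n}u_0-m^{n}u_0\in K$. Choosing $\phi\in X^{*}\setminus\{0\}$ nonnegative on $K$ (Hahn--Banach, using that $K$ is closed, convex and proper) one has $\phi(u_0)>0$ and $\|T^{n}\|\,\|u_0\|\geq\phi(T^{n}u_0)\geq m^{n}\phi(u_0)$, hence $r(T)=\lim_{n}\|T^{n}\|^{1/n}\geq m>0$.

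\textbf{Step 2 (the spectral radius is an eigenvalue with eigenvector in $K^{o}$, plus a dual eigenfunctional).} For $\lambda>r:=r(T)$ the resolvent $R(\lambda)=\sum_{n\geq0}\lambda^{-n-1}T^{n}$ is a norm-convergent series of positive operators, so $R(\lambda)K\subseteq K$; since it is classical that the spectral radius of a positive operator belongs to the spectrum and $T$ is compact with $r\neq0$, the number $r$ is an eigenvalue. Taking $\lambda_{n}\downarrow r$ we have $\|R(\lambda_{n})\|\to\infty$, and using $u_0\in K^{o}$ together with normality of $K$ this upgrades to $\|R(\lambda_{n})u_0\|\to\infty$; then $v_{n}:=R(\lambda_{n})u_0/\|R(\lambda_{n})u_0\|\in K$ has norm $1$, and $(\lambda_{n}-T)R(\lambda_{n})u_0=u_0$ gives $Tv_{n}-rv_{n}\to0$, so compactness of $T$ yields a subsequence with $Tv_{n_{j}}\to w$, hence $v_{n_{j}}\to v:=w/r$ with $\|v\|=1$, $Tv=rv$, $v\in K$, and $v=Tv/r\in K^{o}$ by strong positivity. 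Applying the same argument to $T^{*}$ with the dual cone $K^{*}$ gives $\phi\in K^{*}\setminus\{0\}$ with $T^{*}\phi=r\phi$, and strong positivity of $T$ then forces $\phi>0$ on $K\setminus\{0\}$ (indeed $r\phi(u)=\phi(Tu)>0$ for $u\in K\setminus\{0\}$ since $Tu\in K^{o}$), in particular $\phi(v)>0$.

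\textbf{Step 3 (assertion (i)).} For geometric simplicity, if $Tu=ru$ with $u$ real, then $s:=\inf\{t>0:\ tv-u\in K\}$ is finite (since $v\in K^{o}$) and $sv-u\in K$; if $sv-u\neq0$, strong positivity gives $sv-u=T(sv-u)/r\in K^{o}$, so $s$ can be decreased, a contradiction, whence $u\in\mathrm{span}(v)$ (complex eigenvectors reduce to this via real and imaginary parts). For algebraic simplicity, if $(T-r)u=v$ then applying $\phi$ yields $r\phi(u)=\phi(Tu)=r\phi(u)+\phi(v)$, i.e.\ $\phi(v)=0$, contradicting $\phi(v)>0$; with Riesz--Schauder, $r$ is algebraically simple. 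Finally, if $Tu=\mu u$ with $u\in K\setminus\{0\}$, then $Tu\in K^{o}$ forces $\mu\neq0$, and $\mu<0$ would give $Tu\in K\cap(-K)=\{0\}$, so $\mu>0$; then $r\phi(u)=\phi(Tu)=\mu\phi(u)$ with $\phi(u)>0$ forces $\mu=r$, so $r$ is the only eigenvalue with an eigenvector in $K$.

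\textbf{Step 4 (assertion (ii)), and the main difficulty.} Given $Tu=\lambda u$ in the complexification with $u\neq0$ and $\lambda=re^{i\theta}$, set $z(\gamma):=v-s^{*}\,\mathrm{Re}(e^{i\gamma}u)$ with $s^{*}:=\sup\{s>0:\ z(\gamma)\in K\text{ for all }\gamma\}$, which is finite and positive because $v\in K^{o}$ and $u\neq0$; a compactness argument over $\gamma\in[0,2\pi]$ yields $z(\gamma_0)\in\partial K$ for some $\gamma_0$. Since $Tz(\gamma)=r\,z(\gamma+\theta)$, the preimage relation $Tz(\gamma_0-\theta)=r\,z(\gamma_0)$ forces $z(\gamma_0-\theta)=0$: otherwise strong positivity would give $z(\gamma_0)\in K^{o}$, contradicting $z(\gamma_0)\in\partial K$. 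Hence $\mathrm{Re}(e^{i(\gamma_0-\theta)}u)=v/s^{*}\in K^{o}$, and applying $T$ gives $\mathrm{Re}(e^{i\gamma_0}u)=\mathrm{Re}(e^{i(\gamma_0-\theta)}u)$; writing $e^{i\gamma_0}u=a+ib$ with real $a,b$, this reads $(\cos\theta-1)a=\sin\theta\,b$, and a short case distinction in $\theta$ (using $a\neq0$, that $Ta$ would then be real and collinear with $a$, and the uniqueness from Step 3) forces $\theta\equiv0$, i.e.\ $\lambda=r$. The genuinely delicate point is this last step: calibrating $s^{*}$ so that some $z(\gamma_0)$ lands exactly on $\partial K$ and then exploiting the $T$-\emph{preimage} rather than the image, so that strong positivity collapses the rotation; a secondary technical hurdle --- also relying on normality of $K$ --- is the dual construction of $\phi$ in Step 2, which underpins both the algebraic simplicity and the uniqueness of the positive eigenvalue in Step 3.
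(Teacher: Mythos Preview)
The paper does not prove Theorem~\ref{K-R}; it is stated there as the classical (strong) Krein--Rutman theorem and is invoked as a black box in the proof of Proposition~4.1. So there is no ``paper's own proof'' to compare your attempt against.

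Your reconstruction follows the standard textbook route (positivity of $r(T)$ via an interior point, eigenvector via resolvent blow-up at $r(T)$, dual eigenfunctional, simplicity via the order structure, and the rotation argument for peripheral eigenvalues). This is the expected approach; a few technical caveats are worth flagging. In Step~1 you use normality of $K$ implicitly when bounding $\|T^n\|$ from below via $\phi$; this is fine but should be stated. In Step~2 the passage from $\|R(\lambda_n)\|\to\infty$ to $\|R(\lambda_n)u_0\|\to\infty$ genuinely requires that $u_0\in K^o$ dominates (up to constants) every unit vector, which again uses normality and solidity --- you mention this but the argument is compressed. In Step~4 the concluding ``short case distinction in $\theta$'' is the weakest link as written: from $(\cos\theta-1)a=\sin\theta\,b$ alone one cannot directly conclude $\theta=0$; the usual completion iterates the relation $Tz(\gamma)=r\,z(\gamma+\theta)$ to get $z(\gamma_0-k\theta)=0$ for all $k$, forcing $e^{i\gamma}u$ to be real for a dense (or periodic) set of $\gamma$, hence $u$ real and then Step~3 applies. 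As sketched, your Step~4 would benefit from spelling this out.
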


\noindent We shall work with the following class of kernels:
\begin{equation}\label{eq:kernels_main_assumption}
\hbox{$S_1, S_2, K$ are symmetric and strictly decreasing on $[0,+\infty)$},
\end{equation}
which, together with our nonnegativity assumption (A3) imply in particular
that all the kernels are supported on $\R$. \\
Moreover, having fixed $L_1<L_2$, we assume
for all $x \in (0,L_1)$ that
\begin{equation}\label{eq:assumption_K1}
S_{1}(x-L_{1})-S_{1}(x+L_{1}) > K(x-L_{1})-K(x+L_{1}),
\end{equation}
\begin{equation}\label{eq:assumption_K2}
\mbox{and } \,  S_{2}(x-L_{1})-S_{2}(x+L_{1}) < K(x-L_{1})-K(x+L_{1}),
\end{equation}
for all $x \in (L_1,L_2)$. Assumptions \eqref{eq:assumption_K1} and \eqref{eq:assumption_K2} are met for instance in the significant case $S_i = \sigma_i K$, $\sigma_1 > 1 > \sigma_2$. We assume further that
\begin{equation}\label{eq:assumption_K3}
S_1'(L_1) < K'(L_1).
\end{equation}
Let $\rho_1$ and $\rho_2$ be symmetric steady states, then we can rephrase
\eqref{eq.stationary} as
\begin{equation}
\label{eq.constant}
\begin{cases}
		\ep(\rho_1+\rho_2)-S_{1}\ast\rho_1-K\ast\rho_2=C_{1} & \hbox{for $x\in I_1$}\\
	      \ep(\rho_1+\rho_2)-S_{2}\ast\rho_2-K\ast\rho_1=C_{2} & \hbox{for $x\in I_2$},
\end{cases}
\end{equation}
for suitable constants $C_1, C_2>0$. Assuming segregation, we rewrite \eqref{eq.constant} as
\begin{equation}\label{eq:system_rho}
\begin{cases}
		\displaystyle{x\in I_{1}: \quad \ep \rho_1(x)
=\int_{\R}S_{1}(x-y)\rho_1(y)dy+\int_{\R}K(x-y)\rho_2(y)dy+C_{1}} \\
\\
	       \displaystyle{x\in I_{2}: \quad \ep \rho_2(x)
=\int_{\R}S_{2}(x-y)\rho_2(y)dy+\int_{\R}K(x-y)\rho_1(y)dy+C_{2}}
\end{cases}.
\end{equation}
Let $\bar{w}=\rho_1(L_{1})=\rho_2(L_{1})$. Let $p(x)=-\rho_1'(x)$ be
restricted to the interior of $I_1$ and $q(x)=-\rho_2'(x)$ to the interior of
$I_2$. By differentiating \eqref{eq:system_rho} we obtain
\begin{equation}\label{eq:deriv}
\begin{cases}
      \displaystyle{x\in I_{1}: \, \ep p(x)
=\int_{I_{1}}S_{1}(x-y)p(y)dy+\int_{I_{2}}K(x-y)q(y)dy +\bar{w}A_1(x)} \\
\\	
          \displaystyle{ x\in I_{2}: \,
\ep q(x)=\int_{I_{2}}S_{2}(x-y)q(y)dy+\int_{I_{1}}K(x-y)p(y)dy +
\bar{w} A_2(x)}
\end{cases},
\end{equation}
where
\begin{align}
A_1(x) &= S_{1}(x-L_{1})-S_{1}(x+L_{1})+K(x+L_{1})-K(x-L_{1}), \\
A_2(x) &= K(x-L_{1})-K(x+L_{1})+S_{2}(x+L_{1})-S_{2}(x-L_{1}).
\end{align}
A symmetrization yields for $x>0$, that
\begin{eqnarray*}
\ep p(x)&=&\int_{0}^{L_{1}} \big( S_{1}(x-y)-S_{1}(x+y)\big)p(y)dy\\
&& + \int_{L_{1}}^{L_{2}}\big (K(x-y)-K(x+y)\big)q(y)dy+\bar{w}A_1(x),\\
\ep q(x)&=&\int_{L_{1}}^{L_{2}}\big (S_{2}(x-y)-S_{2}(x+y)\big)q(y)dy\\
&&+\int_{0}^{L_{1}}\big(K(x-y)-K(x+y)\big)p(y)dy+\bar{w}A_2(x).
\end{eqnarray*}
To simplify notation, define the non-negative functions, for $x,y>0$,
\begin{equation*}
\bar{G}(x,y)=G(x-y)-G(x+y)\, , \mbox{ for } \,  G=S_1, S_2, K \ .
\end{equation*}
Then we can rewrite system \eqref{eq:deriv} as
\begin{align*}
&\ep p(x)=\int_{0}^{L_{1}}\bar{S}_{1}(x,y)p(y)dy+\int_{L_{1}}^{L_{2}}\bar{K}(x,y)q(y)dy+\bar{w} A_1(x),\\
&\ep q(x)=\int_{L_{1}}^{L_{2}}\bar{S}_{2}(x,y)q(y)dy+\int_{0}^{L_{1}}\bar{K}(x,y)p(y)dy+\bar{w} A_2(x).
\end{align*}

The result of this section is stated in the following Proposition.
\begin{prop}
Let assumptions (A1), (A3), \eqref{eq:kernels_main_assumption},
\eqref{eq:assumption_K1}, \eqref{eq:assumption_K2}, and
\eqref{eq:assumption_K3} be satisfied. Let $0<L_1<L_2$ be fixed. Then, there
exists a unique (up to mass normalisation) symmetric segregated steady state
$\left(\rho_1,\rho_2\right)$ to \eqref{eq.stationary} with
$\ep=\ep(L_1,L_2)>0$.
\end{prop}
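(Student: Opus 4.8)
The plan is to recast the symmetrised system for $(p,q)=(-\rho_1',-\rho_2')$ displayed above as a genuine eigenvalue problem with $\ep$ as the principal eigenvalue, and to apply the Krein--Rutman theorem. The first observation is that the inhomogeneity $\bar{w}=\rho_1(L_1)=\rho_2(L_1)$ is not a free parameter: reconstructing $\rho_2$ from $q$ via $\rho_2(x)=\int_{x}^{L_2}q(s)\,ds$ on $[L_1,L_2]$ and imposing $w(L_2)=0$ forces $\bar{w}=\int_{L_1}^{L_2}q(s)\,ds$, and then $\rho_1(x)=\bar{w}+\int_{x}^{L_1}p(s)\,ds$ automatically satisfies $\rho_1(L_1)=\bar{w}$. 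Substituting this value of $\bar w$ into the system turns it into $\ep\,(p,q)=\mathcal{T}(p,q)$, where
\[
\mathcal{T}\begin{pmatrix}p\\ q\end{pmatrix}=\begin{pmatrix}\int_{0}^{L_{1}}\bar{S}_{1}(\cdot,y)\,p(y)\,dy+\int_{L_{1}}^{L_{2}}\big(\bar{K}(\cdot,y)+A_1(\cdot)\big)q(y)\,dy\\[1mm]\int_{0}^{L_{1}}\bar{K}(\cdot,y)\,p(y)\,dy+\int_{L_{1}}^{L_{2}}\big(\bar{S}_{2}(\cdot,y)+A_2(\cdot)\big)q(y)\,dy\end{pmatrix},
\]
a linear integral operator with continuous kernels thanks to (A1).

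The delicate point is the choice of the Banach space and cone, because evenness forces $p(0)=0$ and, consistently, $\bar{S}_1(0,y)=\bar{K}(0,y)=A_1(0)=0$, so the first component of $\mathcal{T}(p,q)$ always vanishes at $x=0$; hence the naive cone of nonnegative continuous functions is not left invariant in its interior. I would work instead in $X=X_1\times C([L_1,L_2])$ with $X_1=\{f\in C([0,L_1]):f/x\in C([0,L_1])\}$ (isometric to $C([0,L_1])$ through $f\mapsto f/x$) and the solid cone $\mathcal{C}=\{(f,g):f/x\ge 0,\ g\ge 0\}$. Using $\bar{S}_1(0,y)=0$ one writes $\bar{S}_1(x,y)/x=\int_0^1\big(S_1'(tx-y)-S_1'(tx+y)\big)\,dt$, which is continuous on $[0,L_1]^2$ and equals $-2S_1'(y)>0$ at $x=0$ by strict radial monotonicity; likewise $\bar{K}(x,y)/x$ is continuous and positive at $x=0$, and $A_1(x)/x=\int_0^1A_1'(tx)\,dt$ is continuous with $A_1'(0)=2\big(K'(L_1)-S_1'(L_1)\big)>0$ precisely by \eqref{eq:assumption_K3}. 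It follows that $\mathcal{T}$ maps $X$ into itself and is compact (by the Ascoli--Arzel\`a theorem, together with the fact that $q\mapsto A_i(\cdot)\int q$ has finite rank). For strong positivity one checks that, for $(p,q)\in\mathcal{C}\setminus\{0\}$, the second component of $\mathcal{T}(p,q)$ is strictly positive on all of $[L_1,L_2]$ and the first component divided by $x$ is strictly positive on all of $[0,L_1]$: on the open intervals this uses $\bar{S}_i,\bar{K}>0$ together with $A_1>0$ on $(0,L_1)$ and $A_2>0$ on $(L_1,L_2)$ --- this is exactly where \eqref{eq:assumption_K1}--\eqref{eq:assumption_K2} enter --- while at $x=0$ it uses the positivity of the limiting values computed above (again \eqref{eq:assumption_K3}). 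The Krein--Rutman theorem then provides a unique (up to a scalar) eigenvector $(p,q)\in\mathcal{C}^{o}$ with eigenvalue $\ep:=r(\mathcal{T})>0$; in particular $p(0)=0$, $p>0$ on $(0,L_1]$, and $q>0$ on $[L_1,L_2]$.

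It remains to reconstruct the steady state and check Definition \ref{sgr}. Set $\bar w=\int_{L_1}^{L_2}q$, define $\rho_1(x)=\bar w+\int_{|x|}^{L_1}p$ on $[-L_1,L_1]$, $\rho_2(x)=\int_{|x|}^{L_2}q$ on $[-L_2,-L_1]\cup[L_1,L_2]$ (zero elsewhere), and rescale the free scalar so that $\int\rho_1=m_1$ (which also fixes $m_2$). Then $\rho_1,\rho_2\ge 0$ are even, $C^1$ inside their supports, with the prescribed supports; $w=\rho_1+\rho_2$ is continuous (both one-sided values at $L_1$ equal $\bar w$), strictly decreasing on $[0,L_2]$ (since $w'=-p<0$ on $(0,L_1)$ and $w'=-q<0$ on $(L_1,L_2)$) with $w(L_2)=0$, and $w\in\mathrm{Lip}(\R)$ because $p,q$ are bounded and the one-sided derivatives at $\pm L_1,\pm L_2$ are finite. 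Finally, reversing the symmetrisation and the integration by parts that produced \eqref{eq:deriv}, the relation $\ep p=\mathcal{T}(p,q)_1$ is equivalent to $\big(\ep(\rho_1+\rho_2)-S_1*\rho_1-K*\rho_2\big)'=0$ on $(0,L_1)$; since this quantity is even in $x$ (the kernels and both densities being even), its derivative vanishes on all of $(-L_1,L_1)$, and symmetrically $\big(\ep(\rho_1+\rho_2)-S_2*\rho_2-K*\rho_1\big)'=0$ on $\mathrm{supp}\,\rho_2$. As $\rho_1$ (resp. $\rho_2$) is supported exactly where the corresponding bracketed derivative vanishes, both identities of Definition \ref{def:stationary} hold for every test field, so $(\rho_1,\rho_2)$ is a symmetric segregated steady state with $\ep=\ep(L_1,L_2)$, and uniqueness up to mass normalisation is inherited from the simplicity of the Krein--Rutman eigenvalue. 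I expect the genuine obstacle to be precisely this degeneracy of $\mathcal{T}$ at $x=0$: without the weighted space $X_1$ and the sign condition \eqref{eq:assumption_K3} (ensuring $A_1'(0)>0$) strong positivity breaks down, whereas the remaining compactness, reconstruction and back-integration steps are essentially routine.
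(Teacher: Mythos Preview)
Your argument is correct and follows the same Krein--Rutman strategy as the paper, with a different but equivalent technical packaging. The paper retains $\bar w$ as an independent third coordinate, working in $C^1([0,L_1])\times C^1([L_1,L_2])\times\R$ under the constraint $p(0)=0$ with the $W^{1,\infty}$ norm; the third component of the operator is defined as $\int_{L_1}^{L_2}g$, so that the eigenvalue relation itself forces $\bar w=\int_{L_1}^{L_2}q$ a posteriori, and strong positivity is obtained by computing $f'(0)>0$ directly. You instead eliminate $\bar w$ at the outset by substitution, obtaining a two-component operator, and absorb the degeneracy at $x=0$ into the weighted space $X_1=\{f:f/x\in C([0,L_1])\}$. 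Both devices produce a solid cone whose interior encodes precisely the condition $\lim_{x\to 0}p(x)/x>0$, and both invoke \eqref{eq:assumption_K3} at exactly the same place (your $A_1'(0)>0$ is the paper's $f'(0)>0$ term coming from $\bar w$). Your reduction to two components is slightly more economical; the paper's $C^1$ setting makes compactness marginally more transparent. The reconstruction and back-integration steps are identical in substance.
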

\proof
Consider the Banach space
\begin{equation*}
X_{L_1,L_{2}}=\left\{P=(p,q;w)\in C^{1}([0,L_{1}])\times C^1([L_1,L_2])\times \R,\, \mid p(0)=0\right\},
\end{equation*}
equipped with the $W^{1,\infty}$-norm for the first two components and with the standard one-dimensional Euclidean norm for the third component.
We use the notation $P=(p,q;w)$ for all $P\in X_{L_1,L_{2}}$, with $p\in C^{1}([0,L_{1}])$, $q\in C^1([L_1,L_2])$, and $w\geq 0$. For a given $P\in X_{L_1,L_{2}}$ define $T_{L_1,L_{2}}[P]\in X_{L_1,L_{2}}$ as
\[T_{L_1,L_{2}}[P]=(f,g;w') \in C^{1}([0,L_{1}])\times C^1([L_1,L_2])\times
\R \, , \mbox{ with} \]
\begin{align*}
& f(x)=\int_{0}^{L_{1}}\bar{S}_{1}(x,y )p(y)dy+\int_{L_{1}}^{L_{2}}\bar{K}(x,y)q(y)dy+w A_1(x),\qquad x\in [0,L_1]\\
& g(x)=\int_{L_{1}}^{L_{2}}\bar{S}_{2}(x,y)q(y)dy+\int_{0}^{L_{1}}\bar{K}(x,y)p(y)dy+w A_2(x), \qquad x\in [L_1,L_2]\\
& w' = \int_{L_1}^{L_{2}}\left(\int_{L_{1}}^{L_{2}}\bar{S}_{2}(x,y)q(y)dy+\int_{0}^{L_{1}}\bar{K}(x,y)p(y)dy+w A_2(x)\right)dx.
\end{align*}
The operator $T_{L_1,L_{2}}$ is compact on the Banach space $X_{L_1,L_{2}}$ as all the involved kernels are $C^2$ on compact intervals, and hence by Arzel\'{a}'s theorem the image of the unit ball in $X_{L_1,L_{2}}$ is pre-compact.
It is easy to show that the set	
\begin{equation*}
K_{L_1,L_{2}}=\left\{P=(p,q,w)\in X_{L_1,L_{2}} \mid p\geq 0, q\geq 0, w \geq 0\right\},
\end{equation*}		
is a solid cone in $X_{L_1,L_{2}}$ and that
\begin{eqnarray*}
H_{L_{1},L_{2}}&=& \left\{P=(p,q,w)\in K_{L_1,L_{2}} \mid p'(0)>0,\, p(x)>0\,\, \forall x\in ]0,L_{1}],\,\right. \\
&& \quad  \left. q(x)>0\,\, \forall x\in [L_{1},L_2],\,\, w>0 \right\}\subset K_{L_1,L_{2}}^{o},
\end{eqnarray*}
where $K_{L_1,L_{2}}^{o} $ denotes the interior of $K_{L_1,L_{2}}$.
We show that $T$ is strongly positive on the solid cone. First observe that,
due to \eqref{eq:assumption_K1} and \eqref{eq:assumption_K2}, all components
of $T_{L_1,L_{2}}[(p,q;w)](x)$ are nonnegative if $(p,q;w)\in K_{L_1,L_2}$ with $(p,q;w)\neq 0$. Moreover, setting $f(x)$ as the first component of $T_{L_1,L_{2}}[(p,q;w)](x)$ as above, we get
\begin{eqnarray*}
&&   \frac{d}{dx}f(x)\mid_{x=0}= \int_{0}^{L_{1}}\bar{S}_{1,x}(0,y)p(y)dy+\int_{L_{1}}^{L_{2}}\bar{K}_x(0,y)q(y)dy+ w A_1'(0)\\
&& \quad =\int_{0}^{L_{1}}\big ( S'_{1}(-y)-S'_{1}(y)\big)p(y)dy
+\int_{L_{1}}^{L_{2}}\big(K'(-y)-K'(y)\big)q(y)dy\\
&& \quad \quad  + w \Big(S'_1(-L_1)-S'_1(L_1)+K'(L_1)-K'(-L_1)\Big)\\
&&\quad =-2\int_{0}^{L_{1}}S'_{1}(y)p(y)dy-2\int_{L_{1}}^{L_{2}}K'(y)q(y)dy
\, -2w S_1'(L_1) + 2w K'(L_1) >0,
\end{eqnarray*}
since $(p,q;w)\neq 0$. Hence, $T_{L_1,L_2}[(p,q,w)]$ belongs to $H_{L_1,L_2}$ and therefore the operator is strongly positive on the cone $K_{L_1,L_2}$. The strong version of the Krein-Rutman theorem (Theorem \ref{K-R}) then gives the existence of an eigenvalue $\ep=\ep(L_1,L_2)$ such that
\[T_{L_1,L_2}[(p,q,\bar{w})]=\ep\cdot (p,q,w),\]
with eigenspace generated by a vector $(q,p,w)\in K_{L_1,L_2}^o$. Now, let $\rho_2(x)=\int_x^{L_2} q(x) dx$. The second and third equation
in the eigenvalue condition imply $\bar{w}=\rho_2(L_1)$. Let
$\rho_1(x)=\bar{w}+\int_x^{L_1}p(x) dx$. Clearly, $\rho_1(L_1)=\rho_2(L_1)$. Then, a computation similar to the one that led to the definition of the operator $T_{L_1,L_2}$ after differentiating \eqref{eq:system_rho} implies that $\rho_1$ and $\rho_2$ are symmetric segregated steady states to \eqref{eq.stationary}.
\endproof

\begin{oss}
\emph{The above result shows that a segregated steady state $(\rho_1,\rho_2)$ exists for arbitrary positive constants $L_1$ and $L_2$ such that $\mathrm{supp}(\rho_1)=[-L_1,L_1]$ and $\mathrm{supp}(\rho_2)=[-L_2,-L_1]\cup [L_1,L_2]$ for some diffusion constant $\epsilon>0$ which depends on $L_1$ and $L_2$. Clearly, we obtain a one-parameter family of solutions upon multiplication of $(\rho_1,\rho_2)$ by an arbitrary positive constant. Such a constant is uniquely determined by the total mass $m=m_1+m_2$ with $m_1=\int_\R \rho_1(x) dx$ and $m_2=\int_\R \rho_2(x) dx$. The two values $m_1$ and $m_2$ are not determined explicitly here. We expect the value of $L_1$ to be determined uniquely once $L_2$ and the two masses $m_1$ and $m_2$ are fixed. Most importantly, this approach does not provide an explicit information on the value of the diffusion constant $\epsilon$. A similar situation occurs in the problem studied in \cite{burger_fetecau_huang} for the one species equation with general power-law diffusion. Reasoning as in \cite{franek}, we expect that the diffusion constant $\epsilon$ obeys a monotone increasing law of the form $\epsilon=\overline{\epsilon}(L_2)$. If such a conjecture is true, then for any given $\epsilon$ in the range of the map $\overline{\epsilon}$ and for any given $m_1,m_2>0$ there exist unique $L_1,L_2>0$ and a unique (up to $x$-translations) segregated state $(\rho_1,\rho_2)$ with $m_1=\int_\R \rho_1(x) dx$, $m_2=\int_\R \rho_2(x) dx$, $\mathrm{supp}(\rho_1)=[-L_1,L_1]$, and $\mathrm{supp}(\rho_2)=[-L_2,-L_1]\cup [L_1,L_2]$. Such a conjecture will be addressed in full generality in a future study. In the next section we are able
to prove that \emph{such a statement is true for a small enough diffusion
coefficient}.}
\end{oss}

\section{Existence and uniqueness of steady states for small diffusion}\label{sec:small_diffusion}
Here we prove existence and uniqueness of a symmetric segregated steady state
for fixed masses, and a fixed \emph{small} diffusion coefficient.
We refer to Definition
\ref{sgr} to denote segregated states. Similar to \cite{budif}, we formulate the problem in the pseudo-inverse formalism and then use an implicit function theorem argument. Throughout this section we shall require
\begin{equation}\label{eq:c1c2}
  \min\{S_1''(0),\,S_2''(0),\,K''(0)\}>0.
\end{equation}
The main result of this section reads as follow.
\begin{teo}[Existence of segregated steady states for small diffusion]\label{teo_main}
Assume that the interaction kernels fulfil the additional assumption \eqref{eq:c1c2}. Then, there exists a constant $\epsilon_0$ such that for all $\epsilon\in (0,\epsilon_0)$ the stationary equation \eqref{eq.stationary} admits a unique solution in
the sense of Definition \ref{sgr} with fixed masses $m_1=\int \rho_1 dx$ and $m_2=\int \rho_2 dx$.
\end{teo}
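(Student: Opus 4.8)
The plan is to adapt the pseudo-inverse plus implicit function theorem scheme of \cite{budif} (see also \cite{franek,burger_fetecau_huang}) to the two-species setting. By translation invariance and Definition \ref{sgr} we normalise the center of mass to zero and work on $[0,\infty)$. A symmetric segregated steady state is entirely encoded by $w=\rho_1+\rho_2$, which is nonnegative, decreasing on $[0,L_2]$ with $w(L_2)=0$: once such a $w$ is produced one recovers $\rho_1=w\,\mathbf{1}_{\{|x|\le L_1\}}$ and $\rho_2=w\,\mathbf{1}_{\{L_1\le|x|\le L_2\}}$, so segregation and the jump of $\rho_1,\rho_2$ at $\pm L_1$ come for free. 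Accordingly I would introduce the cumulative mass $G(x)=\int_0^x w\,dy$ on $[0,L_2]$ and its increasing inverse $Y\colon[0,M]\to[0,L_2]$, with $M:=(m_1+m_2)/2$ and $Y(0)=0$; the interface and the outer edge then sit at the \emph{fixed} mass levels $L_1=Y(m_1/2)$ and $L_2=Y(M)$. Rewriting the convolutions $S_i*\rho_i$, $K*\rho_j$ as integrals over the mass variable with $Y$ inserted, and using the differentiated system \eqref{eq:deriv} (which eliminates the constants $C_1,C_2$), the stationary problem becomes a single integro-differential equation for $Y$ on $[0,M]$ whose coefficients switch at $s=m_1/2$ from the pair $(S_1,K)$ to the pair $(S_2,K)$.

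Next I would perform the rescaling dictated by the expected size $L_2\sim\ep^{1/3}$ of the support: set $Y(s)=\ep^{1/3}\widetilde Y(s)$, equivalently $w(x)=\ep^{-1/3}v(\ep^{-1/3}x)$. Since the rescaled support has $O(1)$ size, inserting the Taylor expansions $S_i(z)=S_i(0)+\tfrac12 S_i''(0)z^2+O(z^4)$ and $K(z)=K(0)+\tfrac12 K''(0)z^2+O(z^4)$ into the convolutions (the odd contributions drop by symmetry) and letting $\ep\to0$, the rescaled equation degenerates to a purely local one whose solution is the piecewise-parabolic profile
\begin{equation*}
v_0(\xi)=\begin{cases}\tilde a_1-c_1\xi^2 & \text{for } 0\le\xi\le\ell_1,\\ \tilde a_2-c_2\xi^2 & \text{for } \ell_1\le\xi\le\ell_2,\end{cases}\qquad c_i:=-\tfrac12\bigl(S_i''(0)\,m_i+K''(0)\,m_{3-i}\bigr),
\end{equation*}
where $c_1,c_2>0$ in view of assumption \eqref{eq:c1c2}, and the parameters $0<\ell_1<\ell_2$ and $\tilde a_1,\tilde a_2>0$ are determined by the two mass constraints $\int_0^{\ell_1}v_0=m_1/2$, $\int_{\ell_1}^{\ell_2}v_0=m_2/2$, by continuity of $v_0$ at $\ell_1$, and by $v_0(\ell_2)=0$. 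A short computation shows this reduced algebraic system has a unique positive, non-degenerate solution; let $\widetilde Y_0$ be the corresponding pseudo-inverse.

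I would then set up the implicit function theorem for the map $\mathcal{G}(\ep,\widetilde Y)$ whose zero set is the rescaled stationary system, in a Banach space of functions on $[0,M]$ that are $C^1$ on $[0,m_1/2]$ and on $[m_1/2,M]$, vanish at $0$, match continuously at $m_1/2$, and carry the weight needed to absorb the $\sqrt{M-s}$ behaviour of the pseudo-inverse near the edge of the support (exactly as in \cite{budif}). By construction $\mathcal{G}(0,\widetilde Y_0)=0$, and $\mathcal{G}$ is $C^1$ near $(0,\widetilde Y_0)$ because the kernels are $C^2$ by (A1). At $\ep=0$ the nonlocal part of $\mathcal{G}$ has collapsed to the finitely many moments entering $v_0$, so $D_{\widetilde Y}\mathcal{G}(0,\widetilde Y_0)$ is a compact perturbation of an invertible operator, and its invertibility then reduces to non-singularity of the Jacobian of the reduced algebraic system, which follows from $c_1,c_2>0$. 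The implicit function theorem yields $\ep_0>0$ and a $C^1$ branch $\ep\mapsto\widetilde Y(\ep)$ on $(0,\ep_0)$ with $\widetilde Y(0)=\widetilde Y_0$; undoing the rescaling and reflecting to $\R$ gives, for each such $\ep$, a symmetric segregated steady state with prescribed masses $m_1,m_2$ in the sense of Definition \ref{sgr}.

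Uniqueness among \emph{all} symmetric segregated steady states with given masses (not only those near $v_0$) requires an a priori argument: from the rescaled stationary equation one extracts uniform $W^{1,\infty}$ bounds on $v$ and uniform bounds on the rescaled endpoints, and a compactness argument identifies every $\ep\to0$ limit point as a solution of the limiting local problem, hence as $v_0$; combined with the local uniqueness of the implicit function theorem this closes the argument. I expect the main obstacles to be precisely (i) the invertibility of $D_{\widetilde Y}\mathcal{G}(0,\widetilde Y_0)$, where the two species are coupled through $K$ and through the interface condition and the jump value $\bar w=w(L_1)$ enters \eqref{eq:deriv} explicitly, and (ii) the uniform-in-$\ep$ estimates upgrading local to global uniqueness — the latter being delicate because the cross-diffusion is degenerate and yields no dissipation control on $\rho_1$ and $\rho_2$ separately, so one is forced to exploit the explicit parabolic structure of the limit rather than a soft compactness argument.
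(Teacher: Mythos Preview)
Your proposal is correct and follows essentially the same route as the paper: pseudo-inverse reformulation, the $\ep^{1/3}$ rescaling, the explicit piecewise-Barenblatt solution at $\delta=0$ with constants built from $S_i''(0),K''(0)$ and the masses, and the implicit function theorem in a weighted $L^\infty$-type space that absorbs the square-root behaviour at the outer edge.

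The only noteworthy difference is bookkeeping. You package everything into a single pseudo-inverse $Y$ on $[0,M]$ with the interface fixed at the mass level $m_1/2$; the paper instead splits the unknown into four pieces $(v_1,\mu,v_2,\lambda)$, where $\mu=v_1(\tilde z)=v_2(\tilde z)$ is the (rescaled) interface position and $\lambda=v_2(1)$ the outer edge, and sets up $\mathcal{F}=(F_1,m,F_2,\ell)$ accordingly. This buys a very explicit $4\times4$ block structure for $D\mathcal{F}$ at $\delta=0$: the off-diagonal functional blocks $\partial F_i/\partial v_j$ ($i\neq j$) vanish, the diagonal functional blocks are multiplication by strictly positive functions (here your $c_1,c_2>0$ enters), and what remains is the non-singularity of a scalar $2\times2$ system in $(\mu,\lambda)$, which is checked by hand. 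In your single-$Y$ formulation the same content is there, but extracting it requires separating the ``bulk'' multiplication operator from the finitely many moment conditions that determine $\ell_1,\ell_2,\tilde a_1,\tilde a_2$; the paper's explicit split makes that step transparent. Your remark that global uniqueness (as opposed to the local IFT uniqueness) needs an a priori closeness argument is well taken --- the paper's proof in fact only writes out the IFT step and leaves that upgrade implicit.
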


\subsection{Formulation in the pseudo-inverse variable}\label{subsec:formulation}
Assume that $(\rho_1,\rho_2)$ is a segregated state with the structure as in
Definition \ref{sgr}, and
\[\int \rho_1(x) dx = z_1\,,\qquad \int \rho_2(x) dx = 1-z_1.\]
Hence, $w=\rho_1+\rho_2$ has unit mass and is supported on $[-L_2,L_2]$.
Let
\[F(x)=\int_{-\infty}^x w(y) dy.\]
Let $u:[0,1)\rightarrow \R$ be the pseudo-inverse of $F$
\[u(z)=\inf\{x\in \R\,:\,\, F(x)\geq z\}.\]
Set
$u_i(z)=u(z) \mathbf{1}_{J_i}(z)$, $i=1,2$, with
\[
\mbox{supp}(u_1)=\left[\frac{1}{2}-\frac{z_{1}}{2},\frac{1}{2}+\frac{z_{1}}{2}\right]=:J_{1},
\]
\[
\mbox{supp}(u_2)=\left[0,\frac{1}{2}-\frac{z_{1}}{2}\right]\cup \left[\frac{1}{2}+\frac{z_{1}}{2},1\right]=:J_{2}.
\]
Then \eqref{eq.stationary} can be rewritten as
\begin{equation*}
\begin{cases}
\ep\partial_x w(u_1(z))=\int_{J_1} S'_{1}\big(u_1(z)-u_1(\zeta)\big)d\zeta+\int_{J_2} K'\big(u_1(z)-u_2(\zeta)\big)d\zeta\,,\quad z\in J_1\,, \\
\ep\partial_x w(u_2(z))=\int_{J_2} S'_{2}\big(u_2(z)-u_2(\zeta)\big)d\zeta+\int_{J_1} K'\big(u_2(z)-u_1(\zeta)\big)d\zeta\,,\quad z\in J_2\,.
\end{cases}
\end{equation*}
Recall that $\rho_1$ and $\rho_2$ are symmetric, which implies that the
pseudo-inverse $u$ satisfies $u(1-z)=-u(z)$. Moreover, since $w$ is strictly
positive on $(-L_2,L_2)$, $u$ is strictly increasing on $(0,1)$ and Lipschitz continuous
on the compact subintervals of $(0,1)$. Due to $w$ being zero at $z=0$ and $z=1$, $u$ is expected to have an infinite slope at the boundary.

Since $w=\rho_1 + \rho_2$ and since $\rho_1$, $\rho_2$ have disjoint
supports, we have
\begin{equation}
\label{pseudo_stat}
\begin{cases}
\displaystyle{\frac{\ep}{2}\partial_z (\partial_z u_1(z))^{-2}
=\int_{J_1} S'_{1}\big(u_1(z)-u_1(\zeta)\big)d\zeta} \\
\qquad \qquad \qquad \qquad \quad \displaystyle{+ \int_{J_2}K'
\big(u_1(z)-u_2(\zeta)\big)d\zeta}, & z\in J_1, \\
\displaystyle{\frac{\ep}{2}\partial_z(\partial_z u_2(z))^{-2}
=\int_{J_2} S'_{2}\big(u_2(z)-u_2(\zeta)\big)d\zeta} \\
\qquad \qquad \qquad \qquad \quad \displaystyle{+ \int_{J_1}K'
\big(u_2(z)-u_1(\zeta)\big)d\zeta}, & z\in J_2.
\end{cases}
\end{equation}
The idea is to solve \eqref{pseudo_stat} for small $\ep$, hinted by the
fact that the case $\ep=0$ has the unique solution $u_1\equiv u_2\equiv 0$, which corresponds to $\rho_1$ and $\rho_2$ being two Dirac's deltas with masses $z_1$ and $1-z_1$ respectively. Similar to \cite{BuDiF_NHM}, we expect that the support of $w$ is small for small $\ep$. This suggests the
linearisation $u_i=\delta v_i$, $i=1,2$, with $v_1$, $v_2$ being odd
functions defined on $J_1$, $J_2$ respectively. A simple scaling
argument suggests that $\delta=\ep^{\frac{1}{3}}$, and therefore
\[
\begin{cases}
z\in J_1\quad\frac{\delta}{2}\partial_z(\partial_z v_1)^{-2}=
\int_{J_1} S'_{1}\big(\delta(v_1-v_1(\zeta))\big)+\int_{J_2} K'
\big(\delta(v_1-v_2(\zeta))\big), \\
z\in J_2\quad\frac{\delta}{2}\partial_z(\partial_z v_2)^{-2}
=\int_{J_2} S'_{2}\big(\delta(v_2-v_2(\zeta))\big)+\int_{J_1} K'
\big(\delta(v_2-v_1(\zeta))\big).
\end{cases}
\]
Multiplying the first equation by $\delta\partial_z v_1$ and the second one
by $\delta\partial_z v_2$, we get
\begin{align*}
&\delta^2\partial_z(\partial_z v_1)^{-1}=\partial_z\int_{J_1} S_{1}
\big(\delta(v_1(z)-v_1(\zeta))\big)+\partial_z\int_{J_2} K
\big(\delta(v_1(z)-v_2(\zeta))\big),\\
& \delta^2\partial_z(\partial_z v_2)^{-1}=\partial_z\int_{J_2} S_{2}
\big(\delta(v_2(z)-v_2(\zeta))\big)+\partial_z\int_{J_1} K
\big(\delta(v_2(z)-v_1(\zeta))\big).
\end{align*}
Taking the primitives w.r.t. $z$, we obtain for
$z\in J_1$, respectively $z \in J_2$  that
\begin{equation}\label{gorilla1}
\frac{\delta^2}{\partial_z v_1}=\int_{J_1} S_{1}\big(\delta(v_1(z)-v_1(\zeta))
\big)d\zeta+\int_{J_2} K\big(\delta(v_1(z)-v_2(\zeta))\big)d\zeta+\alpha_1,
\end{equation}
\begin{equation}\label{gorilla2}
\frac{\delta^2}{\partial_z v_2}=\int_{J_2} S_{2}\big(\delta(v_2(z)-v_2(\zeta))
\big)d\zeta+\int_{J_1} K\big(\delta(v_2(z)-v_1(\zeta))\big)d\zeta+\alpha_2,
\end{equation}
with integration constants $\alpha_1$, $\alpha_2$,
which are obtained by
substituting $z=1$ into (\ref{gorilla2})
\begin{equation}\label{eq:alpha2}
\alpha_2=-\int_{J_2} S_{2}\big(\delta(v_2(1)-v_2(\zeta))\big)d\zeta
-\int_{J_1} K\big(\delta(v_2(1)-v_1(\zeta))\big)d\zeta,
\end{equation}
and imposing the continuity condition for $(\partial_z u)^{-1}$ in
$\tilde{z}=(1+z_1)/2$
\begin{eqnarray*}
\alpha_1&=&\int_{J_2} S_{2}\big(\delta\big(v_2((1+z_1)/2)-v_2(\zeta)\big)\big)
-S_{2}\big(\delta(v_2(1)-v_2(\zeta))\big) \, d\zeta\nonumber\\
&& +\int_{J_1} K\big(\delta\big(v_2((1+z_1)/2)-v_1(\zeta)\big)\big)-
K\big(\delta(v_2(1)-v_1(\zeta))\big) \, d\zeta\nonumber\\
&& -\int_{J_1} S_{1}\big(\delta\big(v_1((1+z_1)/2)-v_1(\zeta)\big)\big)d\zeta\nonumber\\
&&
-\int_{J_2} K\big(\delta\big(v_1((1+z_1)/2)-v_2(\zeta)\big)\big)d\zeta.
\end{eqnarray*}
The symmetry requirements imply
\[\int_{J_1} v_1(\zeta)d\zeta=\int_{J_2} v_2(\zeta)d\zeta = 0.\]
Now we multiply \eqref{gorilla1} by $\delta \partial_z v_1$,
\eqref{gorilla2}
by $\delta \partial_z v_2$, integrate w.r.t. $z$ and introduce $G_i, H$ such
that $G_i'=S_i$, $i=1,2$, and $H'=K$. Here $G_1, G_2, H$ can be chosen odd
with $G_1(0)=G_2(0)=H(0)$.
The integration constants can be recovered by prescribing
\begin{equation*}
v_1(1/2)=0\, , \,
v_2(1)=\lambda\, , \,
v_1(\tilde{z})=v_2(\tilde{z})=\mu\, .
\end{equation*}
After some manipulations we obtain
\begin{eqnarray}
z-\frac{1}{2}&=&\delta^{-3}\left[\int_{J_1} G_1\big(\delta(v_1(z)
-v_1(\zeta))\big)
-\delta v_1(z) S_1\big(\delta(\mu-v_1(\zeta))\big) \, d\zeta\right.\nonumber\\
&&  \quad \quad \left. +\int_{J_2} H\big(\delta(v_1(z)-v_2(\zeta))\big)
-\delta v_1(z) K\big(\delta(\mu-v_2(\zeta))\big) \, d\zeta\right]\nonumber \\
&&  + \delta^{-2} v_1(z)\left[\int_{J_2} S_2\big(\delta(\mu-v_2(\zeta))\big)
-S_2\big(\delta(\lambda-v_2(\zeta))\big) \, d\zeta\right. \label{gorilla11}\\
&&  \quad \quad \quad \quad \quad \left. + \int_{J_1} K\big(\delta(\mu-v_1(\zeta))\big)
-K\big(\delta(\lambda-v_1(\zeta))\big) \, d\zeta\right]\, , \, z\in [1/2,\tilde{z}]\,,\nonumber
\end{eqnarray}
as well as
\begin{eqnarray}
z-\tilde{z} &=& \delta^{-3}\left[\int_{J_2} G_2\big(\delta(v_2(z)-v_2(\zeta))\big)
-\delta v_2(z) S_2\big(\delta(\lambda-v_2(\zeta))\big) \, d\zeta\right.\nonumber\\
&& \quad \quad \left.+\int_{J_1} H\big(\delta(v_2(z)-v_1(\zeta))\big)
-\delta v_2(z) K\big(\delta(\lambda-v_1(\zeta))\big) \, d\zeta\right]\nonumber \\
&& - \delta^{-3}\left[\int_{J_2} G_2\big(\delta(\mu-v_2(\zeta))\big)
-\delta \mu S_2\big(\delta(\lambda-v_2(\zeta))\big)\,
d\zeta\right.\label{gorilla22} \\
&& \quad \quad \quad  \left.+\int_{J_1} H\big(\delta(\mu-v_1(\zeta))\big)
-\delta \mu K\big(\delta(\lambda-v_1(\zeta))\big) \, d\zeta\right]\,,\quad z\in [\tilde{z},1]\,.\nonumber
\end{eqnarray}
A solution $(v_1,v_2)$ to \eqref{gorilla11}, \eqref{gorilla22} should be extended to $[0,1/2]$ to obtain an odd profile on the whole interval $[0,1]$.

\begin{oss}\label{rem:referee1}
  \emph{The functional system \eqref{gorilla11}-\eqref{gorilla22} has been obtained manipulating the original system \eqref{gorilla1}-\eqref{gorilla2} through integrations w.r.t. the independent variable $z$ and dividing by $\delta^2$. Therefore, seeking for a solution to \eqref{gorilla11}-\eqref{gorilla22} for $\delta=0$ somehow involves the computation of the \emph{first order term} in the expansion of the r.h.s. of \eqref{gorilla1}-\eqref{gorilla2} with respect to $\delta^2$. This computation will be performed in the next subsection. In some sense, in terms of the linearisation $u_i=\delta v_i$, we are computing the first order term w.r.t. $\delta$ of $u_i$ near $\delta=0$.}
\end{oss}

\subsection{A functional equation}\label{subsec:functional}

System \eqref{gorilla11}, \eqref{gorilla22} can be viewed as a functional
equation in the following form. Introduce the Banach space
\begin{align*}
\Omega=& \Big\{(v_1,\mu,v_2,\lambda)\in L^\infty([1/2,\tilde{z}])\times
\R\times L^\infty([\tilde{z},1])\times \R\,:\\
& \  \hbox{ $v_1$ is right continuous at $1/2$ and left continuous at
$\tilde{z}$}\,,\\
& \  \hbox{ $v_2$ is right continuous at $\tilde{z}$ and left continuous
at $1$,}\\
& \  \, \, v_1(1/2)=0\,,\quad v_1(\tilde{z})=v_2(\tilde{z})=\mu\,,\quad
v_2(1)=\lambda \Big\}.
\end{align*}
And consider the standard norm on $\Omega$
\[\interleave (v_1,\mu,v_2,\lambda)\interleave =\|v_1\|_{L^\infty} + \|v_2\|_{L^\infty} + |\mu| + |\lambda|.\]
For $\alpha>0$, consider the norm
\[\interleave (v_1,\mu,v_2,\lambda)\interleave_\alpha:=\interleave (v_1,\mu,v_2,\lambda)\interleave+\sup_{z\in [\tilde{z},1]}\frac{|\lambda-v_2(z)|}{(1-z)^\alpha},\]
and set
$\Omega_\alpha:=\left\{(v_1,\mu,v_2,\lambda)\in \Omega\,:\,\, \interleave (v_1,\mu,v_2,\lambda)\interleave_\alpha<+\infty\right\}$. \\
We now define the convex subset $U_\alpha\subset \Omega$
\[U_\alpha:=\left\{(v_1,\mu,v_2,\lambda)\in \Omega_\alpha\,:\,\, \hbox{$v_1$ and $v_2$ are increasing}\right\}.\]

In order to formulate \eqref{gorilla11}, \eqref{gorilla22} only on $[1/2,1]$,
we again use the \emph{symmetrised} function
\[\bar{G}(x;y):=G(x-y)+G(x+y)\,,\quad x,y>0\,,\]
for given $G:\R\rightarrow \R$. Moreover, we use the notation
$\tilde{J}_i= J_i\cap [1/2,1]\,,\quad i=1,2$.
Let $(v_1,\mu,v_2,\lambda)\in \Omega$, and $\delta>0$. Define
\begin{eqnarray}
&& F_1[(v_1,\mu,v_2,\lambda);\delta](z):=\frac{1}{2}-z\nonumber\\
&& \quad\quad  +\delta^{-3}\left[\int_{\tilde{J}_1} \bar{G}_1(\delta v_1(z);\delta v_1(\zeta))-\delta v_1(z) \bar{S}_1(\delta \mu;\delta v_1(\zeta)) \, d\zeta\right.\nonumber\\
&& \quad\quad\quad \quad \quad \left.+
\int_{\tilde{J}_2} \bar{H}(\delta v_1(z);\delta v_2(\zeta))-
\delta v_1(z) \bar{K}(\delta \mu;\delta v_2(\zeta)) \, d\zeta\right]\nonumber \\
&& \quad \quad  + \delta^{-2} v_1(z)\left[\int_{\tilde{J}_2}
\bar{S}_2(\delta \mu;\delta v_2(\zeta))-\bar{S}_2(\delta \lambda;\delta v_2(\zeta)) \, d\zeta\right.\nonumber\\
&& \quad\quad\quad \quad \quad \quad \quad  \left. + \int_{\tilde{J}_1}
\bar{K}(\delta \mu;\delta v_1(\zeta))-\bar{K}(\delta \lambda;\delta v_1(\zeta))\, d\zeta\right]\,,\quad z\in \tilde{J}_1\,,\label{Fgorilla11}
\end{eqnarray}
\begin{eqnarray}
&& F_2[(v_1,\mu,v_2,\lambda);\delta](z):=\tilde{z} -z\nonumber \\
&& \quad \quad + \delta^{-3}\left[\int_{\tilde{J}_2}
 \bar{G}_2(\delta v_2(z);\delta v_2(\zeta))-\delta v_2(z)
\bar{S}_2(\delta \lambda;\delta v_2(\zeta)) \, d\zeta\right.\nonumber\\
&& \quad \quad \quad \quad \quad \left.+\int_{\tilde{J}_1}
 \bar{H}(\delta v_2(z);\delta v_1(\zeta))-\delta v_2(z)
\bar{K}(\delta \lambda;\delta v_1(\zeta)) \, d\zeta\right]\nonumber \\
&& \quad \quad  - \delta^{-3}\left[\int_{\tilde{J}_2}
\bar{G}_2(\delta \mu;\delta v_2(\zeta))-\delta \mu \bar{S}_2
(\delta \lambda;\delta v_2(\zeta)) \, d\zeta\right.\nonumber\\
&& \quad \quad \quad \quad \quad  \left.+\int_{\tilde{J}_1} \bar{H}(\delta \mu;\delta v_1(\zeta))-\delta \mu \bar{K}(\delta \lambda;\delta v_1(\zeta)
)\, d\zeta\right]\,,\quad z\in \tilde{J}_2\,.\label{Fgorilla22}
\end{eqnarray}
Here $F_1$ and $F_2$ also depend on the $z$-variable. For
simplicity, we drop this dependence throughout the rest of the section.
Substituting $z=\tilde{z}$ in \eqref{Fgorilla11} and $z=1$ in \eqref{Fgorilla22},
we define
\begin{align*}
  & m[(v_1,\mu,v_2,\lambda);\delta]:=F_1[(v_1,\mu,v_2,\lambda);\delta](\tilde{z}),\\
  & \ell[(v_1,\mu,v_2,\lambda);\delta]:=F_2[(v_1,\mu,v_2,\lambda);\delta](1),
\end{align*}
by substituting $v_1(\tilde{z})=\mu$ and $v_2(1)=\lambda$ in both expressions.

Define the extension of $F_1$ and $F_2$ to $\delta=0$ by using Taylor expansion
together with the symmetry properties of the involved kernels. Let
\[C_1=-S_1''(0)|J_1|-K''(0)|J_2|\,,\quad C_2=-S_2''(0)|J_2|-K''(0)|J_1|\,.\]
Due to the assumptions on $S_1$, $S_2$, and $K$, the constants $S_1''(0)$, $S_2''(0)$ and $K''(0)$ are non-positive.
After some tedious calculations we obtain the natural definition
\begin{align}
& F_1[(v_1,\mu,v_2,\lambda);0](z)=\frac{1}{2}-z-\frac{C_1}{6}v_1(z)^3
+\frac{1}{2}\Big[C_1\mu^2+C_2(\lambda^2-\mu^2)\Big]v_1(z)\,,\label{eq:functional_zero1}\\
& m[(v_1,\mu,v_2,\lambda);0]=\frac{1}{2}-\tilde{z}+\left(\frac{C_1}{3}-\frac{C_2}{2}\right)\mu^3
+\frac{C_2}{2}\mu\lambda^2\,,\nonumber\\
& F_2[(v_1,\mu,v_2,\lambda);0](z)=\tilde{z}-z-\frac{C_2}{6}v_2(z)^3+\frac{C_2}{2}\lambda^2v_2(z)+\frac{C_2}{6}
\mu^3-\frac{C_2}{2}\lambda^2\mu\,,\label{eq:functional_zero2}\\
& \ell[(v_1,\mu,v_2,\lambda);0]=\tilde{z}-1+\frac{C_2}{6}\mu^3+\frac{C_2}{3}\lambda^3 -
\frac{C_2}{2}\mu\lambda^2\,.\nonumber
\end{align}
Our goal is to solve
\[F_1[(v_1,\mu,v_2,\lambda);\delta]=m[(v_1,\mu,v_2\lambda);\delta]
=F_2[(v_1,\mu,v_2,\lambda);\delta]=\ell[(v_1,\mu,v_2,\lambda);\delta]=0\,,\]
for $\delta> 0$ small enough. First we will solve the case $\delta=0$ and then
prove that a solution still exists when $\delta$ is close to zero. The solution
for $\delta=0$ is already partially explicit from formulas
\eqref{eq:functional_zero1}, \eqref{eq:functional_zero2}. We only need to
determine $\mu$ and $\lambda$. To do so, we set the $m$ and $\ell$ components
above equal to zero and obtain
\begin{equation}
\tilde{z}-\frac{1}{2}=\left(\frac{C_1}{3}-\frac{C_2}{2}\right)\mu^3
+\frac{C_2}{2}\mu\lambda^2 \, , \,
1-\tilde{z}=\frac{C_2}{6}\mu^3+\frac{C_2}{3}\lambda^3 -
\frac{C_2}{2}\mu\lambda^2\, , \label{C-formula}
\end{equation}
which should be solved under the constraint $\mu<\lambda$. The second equation in \eqref{C-formula} can be rewritten as
\[(\mu-\lambda)^2(2\lambda+\mu)={6\left(1-\tilde{z}\right)}/{C_2},\]
which describes a cubic hyperbola, asymptotic to the straight lines
$\mu-\lambda=0$ and $2\lambda+\mu=0$ in the $(\mu,\lambda)$ plane.
Consider the branch of
such a curve in the region $0<\mu<\lambda$, intersecting the $\mu=0$ axis at
$(\mu,\lambda)=(0,\lambda_0)$ with \[\lambda_0=\left[{3}\left(1-\tilde{z}
\right)/C_2\right]^{1/3}.\]
Such a branch describes a monotone increasing function $\lambda=\bar{\lambda}(\mu)$ on $\mu>0$, asymptotic to $\lambda=\mu$ as $\mu\rightarrow +\infty$. Now, summing up the two equations in (\ref{C-formula}) we get the following additional condition on $\lambda$ and $\mu$:
\[\lambda = \tilde{\lambda}(\mu):=\left(\frac{3}{2 C_2}+\frac{C_2 - C_1}{C_2}\mu^3\right)^{1/3}.\]
The function $\lambda=\tilde{\lambda}(\mu)$ is also monotone, and attains the value
\[\tilde{\lambda}(0)=\lambda_1 := \left(\frac{3}{2C_2}\right)^{1/3}> \lambda_0
\, . \]
On the other hand, $\tilde{\lambda}(\mu)$ is asymptotic to
the straight line $\lambda = ((C_2-C_1)/C_2)^{1/3}\mu$ as $\mu\rightarrow +\infty$. Since $((C_2-C_1)/C_2)^{1/3}<1$, the two curves $\lambda=\bar{\lambda}(\mu)$ and $\lambda=\tilde{\lambda}(\mu)$ intersect at exactly one point in the region $0<\mu<\lambda$. Hence, $\mu$ and $\lambda$ are uniquely determined.

At this stage, the solution $(v_1,v_2)$ can be easily recovered as the pseudo-inverse variables associated to the densities $\tilde{\rho}_1$ and $\tilde{\rho}_2$
\begin{align*}
& \tilde{\rho}_1(x)=\left[\frac{1}{2}\left(C_1\mu^2+C_2(\lambda^2-\mu^2)\right)-\frac{C_1}{2}x^2\right]_+\mathbf{1}_{[-\mu,\mu]}\,,\\
& \tilde{\rho}_2(x)=\frac{C_2}{2}(\lambda^2-x^2)_+\mathbf{1}_{[-\lambda,-\mu]\cup[\mu,\lambda]}\,,
\end{align*}
which corresponds to two Barenblatt profiles centered at $x=0$ (with possibly different masses and supports) such that the resulting profile $w=\tilde{\rho_1}+\tilde{\rho_2}$ is continuous at $x=\pm \mu$.
This proves that
\[F_1[(v_1,\mu,v_2,\lambda);0] = m[(v_1,\mu,v_2,\lambda);0]=F_2[(v_1,\mu,v_2,\lambda);0]=\ell[(v_1,\mu,v_2,\lambda);0]=0\]
has a unique solution $(v_1,\mu,v_2,\lambda)\in \Omega$. From now on we call
such solution $(v_{0,1},\mu_0,v_{0,2},\lambda_0)$.

\begin{oss}[Inner and outer species]
  \emph{In the above computation we arbitrarily assigned $\rho_1$
to the role of the `inner' species in the segregated state. All of the above
procedure also works with $\rho_2$ as inner species, and therefore we should
 speak of two possible segregated states rather than just one. The only
parameters characterizing each species with respect to this pattern are
$S_1''(0)$ and $S_2''(0)$, which model self-attraction forces. These constants
affect the constants $C_1$ and $C_2$ above, which are non-negative.
When $C_2<C_1$, the self-attraction force of the first species is stronger
than that of the second one, which suggests that $\rho_1$ will concentrate
faster than $\rho_2$, thus producing a pattern in which the first species
occupies the inner region whereas the second species occupies the outer
region. On the other hand, a steady state with the reversed order can still
be constructed under the condition $C_2<C_1$, but we conjecture it to be unstable. This is supported by
numerical simulations shown in Section \ref{sec:simulations}, in which we see that the inner species is the one featuring the `more concentrated' Barenblatt-type profile. We observe that the slope of the function $\lambda=\tilde\lambda(\mu)$ above is negative if $C_2<C_1$ and positive if $C_1<C_2$. This implies that the values of $\mu$ and $\lambda$ solving \eqref{C-formula} are smaller when $C_2<C_1$ compared to the case $C_2>C_1$. Hence, $\rho_1$ and $\rho_2$ have smaller support when $C_2<C_1$. We deduce that the `correct' steady state is `more concentrated' than the unstable one.}
\end{oss}

\begin{oss}[Non-symmetric segregation]
\emph{As shown by the numerical simulations in Section \ref{sec:simulations}, segregation may emerge via a non-symmetric structure, in which $\rho_1+\rho_2$ is supported on a connected interval and $\rho_1$ and $\rho_2$ feature exactly one jump discontinuity, see figure \ref{fig:2} below. This is typically the case for instance when the two species are initially separated. The mathematical proof of the existence of non-symmetric segregated steady states can be carried out similarly to the symmetric case. We omit the details and restrict to the symmetric case for simplicity.}
\end{oss}

\subsection{Solution via implicit function theorem for $\delta>0$}\label{sec:implicit}

In this section we adapt the strategy of \cite[Section 4]{budif} to our two-species problem. In order to solve problem \eqref{gorilla11}, \eqref{gorilla22} for $\delta>0$ small enough, we analyse the operator
\[U_{1/2}\ni(v_1,\mu,v_2,\lambda)\mapsto\mathcal{F}[(v_1,\mu,v_2,\lambda);\delta]:=(F_1,m,F_2,\ell)[(v_1,\mu,v_2,\lambda)]\in\Omega_1.\]

Our first goal is to prove that $\mathcal{F}$ is Frech\'{e}t continuously
differentiable in a neighborhood of $(v_{1,0},\mu_0,v_{2,0},\lambda_0)$, the
solution of $\mathcal{F}[(v_1,\mu,v_2,\lambda);0]=0$. By Taylor expansion
in \eqref{gorilla11}, \eqref{gorilla22} w.r.t. $\delta$ around
$\delta=0$ one sees that $\mathcal{F}$ has a continuous first derivative with respect to $\delta$. We omit the details.

For fixed $\delta>0$, the Jacobian of $\mathcal{F}[\cdot;\delta]$ w.r.t. the first four variables has the block structure
\[D\mathcal{F}=\begin{pmatrix} \frac{\partial F_1}{\partial v_1} & \frac{\partial F_1}{\partial \mu} & \frac{\partial F_1}{\partial v_2} & \frac{\partial F_1}{\partial \lambda}\\
\frac{\partial m}{\partial v_1} & \frac{\partial m}{\partial \mu} & \frac{\partial m}{\partial v_2} & \frac{\partial m}{\partial \lambda}\\
\frac{\partial F_2}{\partial v_1} & \frac{\partial F_2}{\partial \mu} & \frac{\partial F_2}{\partial v_2} & \frac{\partial F_2}{\partial \lambda}\\
\frac{\partial \ell}{\partial v_1} & \frac{\partial \ell}{\partial \mu} & \frac{\partial \ell}{\partial v_2} & \frac{\partial \ell}{\partial \lambda}
\end{pmatrix},\]
where partial derivatives with respect to $v_1$ and $v_2$ are meant to be Frech\'{e}t derivatives. We now compute such terms. Consider perturbations
$  (w_1,a,w_2,b)\in \Omega_{1/2} $
such that
\begin{equation}\label{eq:pert3}
(v_{0,1},\mu_0,v_{0,2},\lambda_0) + (w_1,a,w_2,b) \in U_{1/2}\,.
\end{equation}
We notice that \eqref{eq:pert3} is satisfied if $\interleave (w_1,a,w_2,b)
\interleave_{1/2}$ is small enough, since $v_{0,1}$ and $v_{0,2}$ have their
gradient bounded from below by a positive constant.
Upon extending $w_1$ and $w_2$ odd to $[1-\tilde{z},\tilde{z}]$ and $[0,1-\tilde{z}]\cup[\tilde{z},1]$ respectively, one has
\[\int_{J_1}w_1(\zeta)d\zeta=\int_{J_2}w_2(\zeta)d\zeta=0.\]

Now we compute the partial derivatives of $\mathcal{F}$.
For simplicity, we drop the --$_0$ indices to denote the $\delta=0$ state
and avoid indicating the respective interval for the variable $z$.

Let $(w_1,a,w_2,b)\in \Omega_{1/2}$. By extending all involved functions to $[0,1/2]$ in the usual symmetric form, we get for $\delta>0$
\begin{align*}
& \frac{\partial F_1}{\partial v_1}[(v_1,\mu,v_2,\lambda);\delta](w_1) \\
& \quad\quad = \delta^{-2}\left[\int_{J_1} S_1\big(\delta(v_1(z)
-v_1(\zeta))\big)
(w_1(z)-w_1(\zeta))\right.\\
& \quad\quad\quad \quad \quad \quad \quad \quad  \left.
-w_1(z) S_1\big(\delta(\mu-v_1(\zeta))\big)
+\delta v_1(z) S_1'\big(\delta(\mu-v_1(\zeta))\big)
w_1(\zeta) \, d\zeta\right.\\
& \quad\quad\quad\quad \quad  \left.+\int_{J_2}
K\big(\delta(v_1(z)-v_2(\zeta))\big)w_1(z)
-w_1(z)K\big(\delta(\mu-v_2(\zeta))\big) d\zeta \right]\\
& \quad \quad\quad  +\delta^{-2}w_1(z)\left[\int_{J_2}
S_2\big(\delta(\mu-v_2(\zeta))\big)
-S_2\big(\delta(\lambda-v_2(\zeta))\big) d\zeta\right.\\
& \quad \quad \quad \quad \quad \quad \quad \quad \quad + \left. \int_{J_1}
K\big(\delta(\mu-v_1(\zeta))\big)-K\big(\delta(\lambda-v_1(\zeta))\big)
d\zeta\right]\,,
\end{align*}
and in the limit  $\delta\searrow 0$ we obtain
\begin{equation}
 \frac{\partial F_1}{\partial v_1}[(v_1,\mu,v_2,\lambda);0](w_1) 
= -\frac{C_1}{2}w_1(z)\big(v_1(z)^2-\mu^2\big)
-\frac{C_2}{2} w_1(z)\big(\mu^2-\lambda^2\big).\label{eq:F1v1}
\end{equation}
This limit
is so far just formal.
The same holds for the $\delta\searrow 0$ limits computed below.
However, we shall prove later that these are actually rigorous limits.
Similarly,
\begin{align*}
& \frac{\partial F_1}{\partial v_2}[(v_1,\mu,v_2,\lambda);\delta](w_2) \\
& \quad \quad  = \delta^{-2}\int_{J_2} - K\big(\delta(v_1(z)-v_2(\zeta))\big)
w_2(\zeta)+v_1(z)K'\big(\delta(\mu-v_2(\zeta))\big)w_2(\zeta) d\zeta\\
& \quad \quad\quad + \delta^{-2}v_1(z)\int_{J_2} -S_2'
\big(\delta(\mu-v_2(\zeta))\big)
+S_2'\big(\delta(\lambda-v_2(\zeta))\big) w_2(\zeta) d\zeta.
\end{align*}
One can easily see that, in the $\delta\searrow 0$ limit one gets
\begin{align*}
& \frac{\partial F_1}{\partial v_2}[(v_1,\mu,v_2,\lambda);0](w_2)=0\,.
\end{align*}
We now compute for $\delta>0$,
\begin{align*}
& \frac{\partial F_1}{\partial \mu}[(v_1,\mu,v_2,\lambda);\delta](a) \\
& \quad \quad = \delta^{-1}v_1(z)\left[\int_{J_1}
S_1'\big(\delta(\mu-v_1(\zeta))\big) d\zeta+\int_{J_2} S_2'
\big(\delta(\mu-v_2(\zeta))\big) d\zeta\right] a,
\end{align*}
and a Taylor expansion arguments shows
\begin{align*}
& \frac{\partial F_1}{\partial \mu}[(v_1,\mu,v_2,\lambda);0](a)
= (C_1-C_2)v_1(z)\mu \, a \, .
\end{align*}
Similarly,
\begin{align*}
& \frac{\partial F_1}{\partial \lambda}[(v_1,\mu,v_2,\lambda);\delta](b) \\
& \quad \quad = \delta^{-2}v_1(z)\left[-\delta \int_{J_2}S_2'
\big (\delta(\lambda-v_2(\zeta))\big)d\zeta-\delta\int_{J_1}K'
\big(\delta(\lambda-v_1(\zeta))\big)d\zeta\right] \, b \, ,
\end{align*}
with the $\delta\searrow 0$ limit
\begin{align*}
&\frac{\partial F_1}{\partial \lambda}[(v_1,\mu,v_2,\lambda);0](b) = C_2v_1(z)\lambda b.
\end{align*}
Turning to the second row of $D\mathcal{F}$ we get
\begin{align*}
& \frac{\partial m}{\partial v_1}[(v_1,\mu,v_2,\lambda);\delta](w_1) \\
& \quad = \delta^{-3}\int_{J_1} \Big[ -\delta S_1 \big (\delta(\mu-v_1(\zeta)) \big)
+\delta^2 \mu S_1'\big (\delta(\mu-v_1(\zeta)) \big )\Big ] w_1(\zeta)d\zeta\\
& \quad \quad  + \delta^{-2}\mu \int_{J_1} \Big[ -\delta K'\big
(\delta(\mu-v_1(\zeta))\big )
+\delta K'\big (\delta(\lambda-v_1(\zeta)) \big )\Big ]  w_1(\zeta)d\zeta,
\end{align*}
and a simple symmetry argument shows that
\begin{align*}
& \frac{\partial m}{\partial v_1}[(v_1,\mu,v_2,\lambda);0](w_1) = 0.
\end{align*}
Next we compute
\begin{align*}
& \frac{\partial m}{\partial \mu}[(v_1,\mu,v_2,\lambda);\delta](a) \\
& \quad  = -\delta^{-1}\mu a \int_{J_1}S_1' \big(\delta(\mu-v_2(\zeta))\big)
d\zeta-\delta^{-1}\mu a\int_{J_2} K'\big(\delta(\mu-v_2(\zeta))\big) d\zeta \\
& \quad \quad +\delta^{-2} a \int_{J_2} S_2\big(\delta(\mu-v_2(\zeta))\big)
-S_2\big (\delta(\lambda-v_2(\zeta))\big ) d\zeta\\
& \quad \quad  + \delta^{-2} a \int_{J_1} K \big (\delta(\mu-v_1(\zeta))\big)
-K\big(\delta(\lambda-v_1(\zeta))\big) d\zeta\\
& \quad \quad + \delta^{-1} \mu a\int_{J_2}S_2'\big (\delta(\mu-v_2(\zeta))\big )d\zeta + \delta^{-1}\mu a\int_{J_1} K'\big (\delta(\mu-v_1(\zeta))\big )d\zeta,
\end{align*}
and a simple computation shows in the $\delta\searrow 0$ limit,
\begin{align*}
& \frac{\partial m}{\partial \mu}[(v_1,\mu,v_2,\lambda);0](a) = \left(C_1\mu^2-\frac{3}{2}C_2\mu^2 +\frac{C_2}{2}\lambda^2\right)a\,.
\end{align*}
Similar to the above, we have
\begin{eqnarray*}
&& \frac{\partial m}{\partial v_2}[(v_1,\mu,v_2,\lambda);\delta](w_2) \\
&& \quad  = \delta^{-3}\int_{J_2}\Big[-\delta K\big(\delta(\mu-v_2(\zeta))\big)
+\delta\mu K'\big(\delta(\mu-v_2(\zeta))\big)\Big] w_2(\zeta) d\zeta\\
&& \quad \quad  + \delta^{-2}\mu\int_{J_2}\Big[-\delta S_2'
\big(\delta(\mu-v_2(\zeta))\big)+
\delta S_2\big(\delta(\lambda-v_2(\zeta))\big)\Big]
w_2(\zeta) d\zeta\, , \\
&& \mbox{with } \frac{\partial m}{\partial v_2}[(v_1,\mu,v_2,\lambda);0](w_2) = 0.
\end{eqnarray*}
We conclude the second row of the Jacobian by computing
\begin{align*}
& \frac{\partial m}{\partial \lambda}[(v_1,\mu,v_2,\lambda);\delta](b) \\
& \quad = -\delta^{-1}\mu b \int_{J_2} S_2'\big(\delta(\lambda-v_2(\zeta))\big) d\zeta -\delta^{-1}\mu b \int_{J_1}K'\big(\delta(\lambda-v_2(\zeta))\big) d\zeta,
\end{align*}
with the $\delta\searrow 0$ limit being
\begin{align*}
& \frac{\partial m}{\partial \lambda}[(v_1,\mu,v_2,\lambda);0](b) = C_2\lambda \mu.
\end{align*}
Now looking at the third row of $D\mathcal{F}$, we compute
\begin{align}
& \frac{\partial F_2}{\partial v_1}[(v_1,\mu,v_2,\lambda);\delta](w_1) \nonumber\\
& \quad  =\delta^{-3}\int_{J_1} - \delta K\big(\delta(v_2(z)-v_1(\zeta))\big )w_1(\zeta)
+\delta^2 v_2(z)K'\big (\delta(\lambda-v_1(\zeta))\big ) w_1(\zeta) d\zeta\nonumber\\
& \quad \quad -\delta^{-3}\int_{J_1} -\delta w_1(\zeta)
K\big (\delta(\mu-v_1(\zeta))\big )+\delta^2 \mu w_1(\zeta) K'\big (\delta(\lambda-v_1(\zeta))\big ) d\zeta,\label{eq:F2v1}
\end{align}
and for $\delta\searrow 0$ this shows
\begin{align*}
& \frac{\partial F_2}{\partial v_1}[(v_1,\mu,v_2,\lambda);0](w_1) = 0.
\end{align*}
Then, we have
\begin{align*}
& \frac{\partial F_2}{\partial \mu}[(v_1,\mu,v_2,\lambda);\delta](a) \\
& \quad = -\delta^{-3}\int_{J_2}\delta S_2\big(\delta(\mu-v_2(\zeta))\big) a
-\delta S_2\big(\delta(\lambda-v_2(\zeta))\big) a \, d\zeta\\
& \quad \quad -\delta^{-3}\int_{J_1} \delta K\big(\delta(\mu-v_1(\zeta))\big)
a-\delta K\big(\delta(\lambda-v_1(\zeta))\big)a \, d\zeta,
\end{align*}
with the $\delta\searrow 0$ limit
\begin{align*}
& \frac{\partial F_2}{\partial \mu}[(v_1,\mu,v_2,\lambda);0](a) = \frac{C_2}{2}(\mu^2-\lambda^2) a\,.
\end{align*}
We continue computing the third row of $D\mathcal{F}$ with
\begin{align*}
& \frac{\partial F_2}{\partial v_2}[(v_1,\mu,v_2,\lambda);\delta](w_2) \\
& \quad = \delta^{-3}\int_{J_2} \delta S_2\big(\delta(v_2(z)-v_2(\zeta))\big)
(w_2(z)-w_2(\zeta))-\delta w_2(z) S_2\big(\delta(\lambda-v_2(\zeta))\big)\\
& \quad \quad \quad \quad \quad +\delta^2 v_2(z)
w_2(\zeta)S_2'\big(\delta(\lambda-v_2(\zeta))\big) \, d\zeta\\
& \quad \quad + \delta^{-3}\int_{J_1} \delta w_2(z)K\big(\delta(v_2(z)-
v_1(\zeta))\big)-\delta w_2(z)K\big(\delta(\lambda-v_1(\zeta))\big) \, d\zeta\\
& \quad \quad - \delta^{-3}\int_{J_2} -\delta w_2(\zeta) S_2
\big(\delta(\mu-v_2(\zeta))\big)+\delta^2\mu w_2(\zeta)S_2'
\big(\delta(\lambda-v_2(\zeta))\big) \, d\zeta,
\end{align*}
and the $\delta\searrow 0$ limit
\begin{align*}
   & \frac{\partial F_2}{\partial v_2}[(v_1,\mu,v_2,\lambda);0](w_2) =\frac{C_2}{2}(\lambda^2-v_2^2(z))w_2(z)\,.
\end{align*}
To conclude the third row of the Jacobian, we have
\begin{align*}
   & \frac{\partial F_2}{\partial \lambda}[(v_1,\mu,v_2,\lambda);\delta](b) \\
   & \quad = -\delta^{-3}\int_{J_2}\delta^2 (v_2(z)-\mu)b S_2'
\big(\delta(\lambda-v_2(\zeta))\big)d\zeta\\
   & \quad \quad  -\delta^{-3}\int_{J_1}\delta^{-2}(v_2(z)-\mu)b K'
\big(\delta(\lambda-v_1(\zeta))\big)d\zeta\,,
\end{align*}
and the $\delta\searrow 0$ limit is
\begin{align*}
   & \frac{\partial F_2}{\partial \lambda}[(v_1,\mu,v_2,\lambda);0](b) =C_2(v_2(z)-\mu)\lambda b\,.
\end{align*}
Finally, we analyse the last row of the Jacobian of $\mathcal{F}$.
\begin{align*}
   & \frac{\partial \ell}{\partial v_1}[(v_1,\mu,v_2,\lambda);\delta](w_1) \\
   & \quad = \delta^{-3}\int_{J_1}\Big[-\delta
K\big(\delta(\lambda-v_1(\zeta))\big)
+ \delta K\big(\delta(\mu-v_1(\zeta))\big)\Big] w_1(\zeta)d\zeta\\
   & \quad \quad -\delta^{-3}\int_{J_1}\Big[-\delta^2 \lambda K'
\big(\delta(\lambda-v_1(\zeta))\big)+\delta^2 \mu K'
\big(\delta(\lambda-v_(\zeta))\big)\Big]w_1(\zeta) d\zeta,
\end{align*}
and the $\delta\searrow 0$ limit can be easily computed to be
\begin{align*}
   & \frac{\partial \ell}{\partial v_1}[(v_1,\mu,v_2,\lambda);0](w_1) =0.
\end{align*}
Then, we have
\begin{align*}
   & \frac{\partial \ell}{\partial \mu}[(v_1,\mu,v_2,\lambda);\delta](a) \\
   & \quad  = \delta^{-3} a \int_{J_2} -\delta S_2\big(\delta(\mu-v_2(\zeta))\big)
+\delta S_2\big(\delta(\lambda-v_2(\zeta))\big) \, d\zeta\\
   & \quad \quad +\delta^{-3}a\int_{J_1} -\delta K\big(\delta(\mu-v_1(\zeta))\big)
+\delta K\big(\delta(\lambda-v_1(\zeta))\big) \, d\zeta,
\end{align*}
with the $\delta\searrow 0$ limit being
\begin{align*}
   & \frac{\partial \ell}{\partial \mu}[(v_1,\mu,v_2,\lambda);\delta](a) =\frac{C_2}{2}(\mu^2-\lambda^2)a.
\end{align*}
We then continue with
\begin{align*}
   & \frac{\partial \ell}{\partial v_2}[(v_1,\mu,v_2,\lambda);\delta](w_2) \\
   & \quad = \delta^{-3}\int_{J_2} -\delta w_2(\zeta) S_2\big(\delta(\lambda-v_2
(\zeta))\big)+\delta w_2(\zeta)S_2\big(\delta(\mu-v_2(\zeta))\big) \, d\zeta\\
   & \quad \quad + \delta^{-3}\int_{J_2} \delta^2 \lambda w_2(\zeta)S_2'
\big(\delta(\lambda-v_2(\zeta))\big)-\delta^2\mu w_2(\zeta)S_2'
\big(\delta(\lambda-v_2
(\zeta))\big) \, d\zeta,
\end{align*}
and
\begin{align*}
   & \frac{\partial \ell}{\partial v_2}[(v_1,\mu,v_2,\lambda);0](w_2) =0.
\end{align*}
The last term is
\begin{align*}
   & \frac{\partial \ell}{\partial \lambda}[(v_1,\mu,v_2,\lambda);\delta](b) \\
   & \quad = \delta^{-3}b\int_{J_2} -\delta^2\lambda S_2'\big(\delta(\lambda-
v_2(\zeta))\big)+\delta^2\mu S_2'\big(\delta(\lambda-v_2(\zeta))\big)
\, d\zeta\\
   & \quad \quad +\delta^{-3}b\int_{J_1} -\delta^2 \lambda K'
\big(\delta(\lambda-
v_1(\zeta))\big)+\delta^2 \mu K'\big(\delta(\lambda-v_1(\zeta))\big) \, d\zeta,
\end{align*}
and the $\delta\searrow 0$ limit is
\begin{align*}
   & \frac{\partial \ell}{\partial \lambda}[(v_1,\mu,v_2,\lambda);0](b) =C_2(\lambda^2-\mu\lambda)b\,.
\end{align*}

The above computations show for $\delta$ small enough, that
$D\mathcal{F}[(v_{0,1},\mu_0,v_{0,2},\lambda_0);\delta]$ is a bounded linear
operator from $\Omega$ into itself, and that $D\mathcal{F}$ is continuous at
$\delta=0$ in the operator norm.
This is easily seen via Taylor expansion, using bounds on the $L^\infty$ norms,
 and symmetry properties.

\begin{lem}\label{lem1}
$D\mathcal{F}[(v_{0,1},\mu_0,v_{0,2},\lambda_0);\delta]$ is a bounded linear operator from $\Omega_{1/2}$ to $\Omega_1$ for $\delta>0$ small enough.
\end{lem}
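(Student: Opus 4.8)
The goal is to sharpen the already established boundedness $D\mathcal{F}[(v_{0,1},\mu_0,v_{0,2},\lambda_0);\delta]\colon\Omega\to\Omega$ (together with its continuity in $\delta$ at $\delta=0$ in operator norm) into boundedness with values in the smaller space $\Omega_1$. Since the norm of $\Omega_1$ exceeds that of $\Omega$ only by the weighted seminorm $\sup_{z\in[\tilde z,1]}|\lambda-v_2(z)|/(1-z)$, and since in the image $(F_1,m,F_2,\ell)$ the $v_2$-slot is the function $F_2(\cdot)$ and the $\lambda$-slot is the scalar $\ell=F_2(1)$, it suffices to prove the following: for every perturbation $(w_1,a,w_2,b)\in\Omega_{1/2}$, setting $\mathcal{L}:=D\mathcal{F}[(v_{0,1},\mu_0,v_{0,2},\lambda_0);\delta](w_1,a,w_2,b)$ and denoting by $\mathcal{L}_{F_2}$ its $F_2$-component,
\[\sup_{z\in[\tilde z,1]}\frac{|\mathcal{L}_{F_2}(1)-\mathcal{L}_{F_2}(z)|}{1-z}\le C\,\interleave(w_1,a,w_2,b)\interleave_{1/2},\]
with $C$ uniform for $\delta$ near $0$. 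Recall that perturbations in $\Omega_{1/2}$ satisfy $w_2(1)=b$ and that $v_{0,2}(1)=\lambda_0$.

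First I would treat $\delta=0$, where the third row of $D\mathcal{F}$ is explicit: $\partial_{v_1}F_2[\,\cdot\,;0]=0$, $\partial_\mu F_2[\,\cdot\,;0](a)=\tfrac{C_2}{2}(\mu_0^2-\lambda_0^2)a$ is $z$-independent, $\partial_{v_2}F_2[\,\cdot\,;0](w_2)=\tfrac{C_2}{2}(\lambda_0^2-v_{0,2}(z)^2)w_2(z)$, and $\partial_\lambda F_2[\,\cdot\,;0](b)=C_2(v_{0,2}(z)-\mu_0)\lambda_0 b$. Hence the $\partial_{v_1}$ and $\partial_\mu$ contributions drop out of $\mathcal{L}_{F_2}(1)-\mathcal{L}_{F_2}(z)$, and one is left with $\tfrac{C_2}{2}(\lambda_0^2-v_{0,2}(z)^2)w_2(z)+C_2\lambda_0(v_{0,2}(z)-\lambda_0)b$. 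Splitting $w_2(z)=(w_2(z)-b)+b$, the $(w_2(z)-b)$-piece is $O(1-z)$ directly, because $|\lambda_0^2-v_{0,2}(z)^2|\lesssim(1-z)^{1/2}$ and $|w_2(z)-b|\le(1-z)^{1/2}\interleave(w_1,a,w_2,b)\interleave_{1/2}$; the $b$-piece collapses, via the elementary identity $\tfrac{C_2}{2}(\lambda_0^2-v_{0,2}^2)+C_2\lambda_0(v_{0,2}-\lambda_0)=-\tfrac{C_2}{2}(\lambda_0-v_{0,2})^2$, to $-\tfrac{C_2}{2}(\lambda_0-v_{0,2}(z))^2b$, which is $O(1-z)$ since $v_{0,2}$ is the pseudo-inverse of the Barenblatt profile $\tilde\rho_2$ and therefore $\lambda_0-v_{0,2}(z)\asymp(1-z)^{1/2}$ near $z=1$. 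This gives $D\mathcal{F}[(v_{0,1},\mu_0,v_{0,2},\lambda_0);0]\in\mathcal{L}(\Omega_{1/2},\Omega_1)$.

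Then I would pass to $\delta>0$ by Taylor expansion. In the explicit $\delta$-dependent formulas for $\partial_{v_1}F_2$, $\partial_\mu F_2$, $\partial_{v_2}F_2$, $\partial_\lambda F_2$, the prefactors $\delta^{-3},\delta^{-2},\delta^{-1}$ are compensated by Taylor expanding the $C^2$ kernels $S_2$, $K$ around $0$: the lowest-order contributions cancel by the evenness of the kernels together with the oddness of $v_{0,1},v_{0,2},w_1,w_2$ and the vanishing means $\int_{J_1}w_1=\int_{J_2}w_2=0$, reproducing exactly the $\delta=0$ expressions, while the remainders are $O(\delta)$ uniformly in $z$. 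Crucially, the $(1-z)^{1/2}$-decay of $\lambda_0-v_{0,2}$ near $z=1$ is carried along at every order of the expansion, so the remainders are $O(\delta)$ also in the $(1-z)^{-1}$-weighted seminorm; combined with the $\delta=0$ estimate this yields $D\mathcal{F}[(v_{0,1},\mu_0,v_{0,2},\lambda_0);\delta]=D\mathcal{F}[(v_{0,1},\mu_0,v_{0,2},\lambda_0);0]+O(\delta)$ in $\mathcal{L}(\Omega_{1/2},\Omega_1)$, hence boundedness $\Omega_{1/2}\to\Omega_1$ for $\delta$ small enough.

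The \emph{main obstacle} is the $\delta=0$ boundary analysis at $z=1$. The base point $(v_{0,1},\mu_0,v_{0,2},\lambda_0)$ itself lies only in $\Omega_{1/2}$ and not in $\Omega_1$ — precisely because $\lambda_0-v_{0,2}(z)$ decays like $(1-z)^{1/2}$ rather than $(1-z)$ — so the extra power of $(1-z)$ gained in the image can only be produced by the exact cancellation between $\partial_{v_2}F_2$ and $\partial_\lambda F_2$ displayed above, used in tandem with the $\Omega_{1/2}$-seminorm to absorb the $(w_2(z)-b)$-term. The remaining work — making every $\delta$-remainder uniform in $z$ up to the boundary in the weighted seminorm — is routine but laborious, and rests on the Lipschitz regularity of $v_{0,1}$ on all of $\tilde J_1$ and of $v_{0,2}$ on compact subsets of $(0,1)$, together with the square-root boundary law for $v_{0,2}$.
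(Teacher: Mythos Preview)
Your proposal is correct and follows essentially the same route as the paper: reduce to the weighted seminorm of the $F_2$--$\ell$ difference, exploit the square-root boundary law $\lambda_0-v_{0,2}(z)\asymp(1-z)^{1/2}$ together with the $\Omega_{1/2}$-control on $w_2-b$, and Taylor-expand the $\delta$-dependent kernels to handle the remainders. The paper organises the argument around showing that $g_2(\delta;z)-g_2(0;z)$ vanishes in the weighted norm as $\delta\to0$ (i.e.\ continuity at $\delta=0$ in $\mathcal{L}(\Omega_{1/2},\Omega_1)$), leaving the $\delta=0$ boundedness itself implicit in the explicit formulas; you make that step explicit via the cancellation identity $\tfrac{C_2}{2}(\lambda_0^2-v_{0,2}^2)+C_2\lambda_0(v_{0,2}-\lambda_0)=-\tfrac{C_2}{2}(\lambda_0-v_{0,2})^2$, so your presentation is in fact slightly more complete on this point.
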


\proof
Due to the structure of the spaces $\Omega_\alpha$, we only need to check the
following. Define for $z\in J_2$ and $\delta\geq 0$,
\begin{eqnarray*}
   g_2(\delta;z)&=& \frac{\partial F_2}{\partial v_1}[(v_1,\mu,v_2,\lambda);\delta](w_1)+ \frac{\partial F_2}{\partial \mu}[(v_1,\mu,v_2,\lambda);\delta](a) \\
  &&  +\frac{\partial F_2}{\partial v_2}[(v_1,\mu,v_2,\lambda);\delta](w_2)+\frac{\partial F_2}{\partial \lambda}[(v_1,\mu,v_2,\lambda);\delta](b) \\
  && \ - \frac{\partial \ell}{\partial v_1} [(v_1,\mu,v_2,\lambda);\delta](w_1)- \frac{\partial \ell}{\partial \mu} [(v_1,\mu,v_2,\lambda);\delta](a) \\
  &&  \ -
  \frac{\partial \ell}{\partial v_2} [(v_1,\mu,v_2,\lambda);\delta](w_2)
-\frac{\partial \ell}{\partial \lambda} [(v_1,\mu,v_2,\lambda);\delta](b) \, .
\end{eqnarray*}
We need to prove that
\[\sup_{\interleave (w_1,a,w_2,b)\interleave_{1/2}\leq 1}\frac{1}{1-z}
\big|g_2(\delta;z)-g_2(0;z)\big|\searrow 0,\qquad \hbox{as $ \delta\searrow 0$}.\]
Consider all the above terms separately, and for notational
reasons omit the dependence on $(v_1,\mu,v_2,\lambda)$. By Taylor expansion,
and using simple symmetry properties, we get
\begin{align}
  & \frac{1}{1-z}\left[\left(\frac{\partial F_2}{\partial v_1}(\delta)
-\frac{\partial F_2}{\partial v_1}(0)\right)(w_1)
-\left(\frac{\partial \ell}{\partial v_1}(\delta)-\frac{\partial \ell}
{\partial v_1}(0)\right)(w_1) \right]\nonumber\\
  & \quad = -\delta\int_{J_1}w_1(\zeta)\left[\frac{K''(\bar{x}(\zeta))}{2}\frac{(\lambda-v_1(z))^2}{1-z}+\frac{K'''(\tilde{x}(\zeta))}{6}\frac{(v_2(z)-\lambda)^3}{1-z}\right]
  d\zeta,\label{eq:derivative1}
\end{align}
for some $\bar{x}(\zeta)$ and $\tilde{x}(\zeta)$ in the interval
$[0,\lambda\delta]$. Now, it can be proved (cf. a similar argument in \cite[Lemma 4.1]{budif}) that $(\lambda-v_2(z))/(1-z)^{1/2}$ is uniformly bounded in
$[\tilde{z},1]$. This provides the desired estimate for the related term
in \eqref{eq:derivative1}.
One can check that
\[\frac{\partial F_2}{\partial \mu}(\delta)-\frac{\partial \ell}{\partial \mu}(\delta)-\frac{\partial F_2}{\partial \mu}(0)+\frac{\partial \ell}{\partial \mu}(0)\equiv 0.\]
We now estimate
\begin{align*}
    & \frac{1}{1-z}\left[\left(\frac{\partial F_2}{\partial v_2}(\delta)
-\frac{\partial F_2}{\partial v_2}(0)\right)(w_2) +\left(\frac{\partial F_2}
{\partial \lambda}(\delta)-\frac{\partial F_2}{\partial \lambda}(0)\right)(b)
\right.\\
    & \quad \quad \quad \quad \left. -\left(\frac{\partial \ell}{\partial v_2}(\delta)
-\frac{\partial \ell}{\partial v_2}(0)\right)(w_2) - \left(\frac{\partial \ell}
{\partial \lambda}(\delta)-\frac{\partial \ell}{\partial \lambda}(0)\right)(b)
\right]\\
    & \quad =  \frac{1}{1-z}\left\{\int_{J_2}w_2(\zeta)
\left[\frac{1}{2}(v_2(z)-\lambda)^2 S_2''\big(\delta(\lambda-v_2(\zeta))\big)
+\frac{\delta}{6}(\lambda-v_2(z))^3 S_2'''(\bar{x})\right]d\zeta\right.\\
    &\quad \quad \quad \quad \quad  + w_2(z)\int_{J_2} \delta^{-2}
S_2\big(\delta(v_2(z)-v_2(\zeta))\big)
-S_2\big(\delta(\lambda-v_2(\zeta))\big) \, d\zeta\\
    & \quad \quad \quad \quad \quad + w_2(z)\int_{J_1} \delta^{-2}
K\big(\delta(v_2(z)-v_1(\zeta))\big)
-K\big(\delta(\lambda-v_1(\zeta))\big) \, d\zeta\\
    & \quad \quad \quad \quad \quad  + b\delta^{-1}(\lambda-v_2(z))
\left[\int_{J_2}S_2'\big(\delta(\lambda-v_2(\zeta))\big)d\zeta
+\int_{J_1}K'\big(\delta(\lambda-v_1(\eta))\big)d\zeta\right]\\
    & \quad \quad \quad \quad \quad  \left.-C_2\lambda b (v_2(z)-\lambda)+\frac{C_2}{2}w_2(z)(v_2(z)^2
-\lambda^2)\right \},
\end{align*}
and some tedious Taylor expansions imply
\begin{align*}
     &  \frac{1}{1-z}\left[\left(\frac{\partial F_2}{\partial v_2}(\delta)
-\frac{\partial F_2}{\partial v_2}(0)\right)(w_2) + \left(\frac{\partial F_2}
{\partial \lambda}(\delta)-\frac{\partial F_2}{\partial \lambda}(0)\right)(b)
\right.\\
    & \quad \quad \quad \left. -\left(\frac{\partial \ell}{\partial v_2}(\delta)
-\frac{\partial \ell}{\partial v_2}(0)\right)(w_2) - \left(\frac{\partial \ell}
{\partial \lambda}(\delta)-\frac{\partial \ell}{\partial \lambda}(0)\right)(b)
\right]\\
    & \quad =  O(\delta^2)(1+w_2(z))\left(\frac{(\lambda-v_2(z))^2}{1-z}+ \frac{(\lambda-v_2(z))^4}{1-z}\right)\\
    & \quad \quad  + \frac{(w_2(z)-b)(v_2(z)-\lambda)}{1-z}O(\delta).
\end{align*}
This proves the assertion.
\endproof

\begin{lem}\label{lem2}
$D\mathcal{F}[(v_{0,1},\mu_0,v_{0,2},\lambda_0);0]$ is a linear isomorphism
between $\Omega_{1/2}$ and $\Omega_1$
for $\delta>0$ small enough.
\end{lem}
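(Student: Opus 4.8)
\proof
The plan is to prove that $L:=D\mathcal{F}[(v_{0,1},\mu_0,v_{0,2},\lambda_0);0]$ is a bounded linear bijection of $\Omega_{1/2}$ onto $\Omega_1$; since boundedness is Lemma \ref{lem1}, the open mapping theorem then gives the isomorphism, and the extension to small $\delta>0$ in the statement is automatic because invertibility is open in the operator norm and $\delta\mapsto D\mathcal{F}[(v_{0,1},\mu_0,v_{0,2},\lambda_0);\delta]$ is norm-continuous at $\delta=0$ (Lemma \ref{lem1} together with the Taylor expansion in $\delta$). The decisive structural fact is that at $\delta=0$ the Jacobian is block-triangular: the limit formulas computed above give $\frac{\partial F_1}{\partial v_2}=\frac{\partial m}{\partial v_1}=\frac{\partial m}{\partial v_2}=\frac{\partial F_2}{\partial v_1}=\frac{\partial \ell}{\partial v_1}=\frac{\partial \ell}{\partial v_2}=0$. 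Thus, given $(f_1,g,f_2,h)\in\Omega_1$, the equation $L(w_1,a,w_2,b)=(f_1,g,f_2,h)$ splits into a closed $2\times2$ linear system for the scalars $(a,b)$ (the $m$- and $\ell$-rows), a pointwise equation for $w_1$ (the $F_1$-row), and a pointwise equation for $w_2$ (the $F_2$-row), to be solved in this order.

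For the scalars, the coefficient matrix has entries $\frac{\partial m}{\partial\mu}=(C_1-\tfrac32 C_2)\mu_0^2+\tfrac{C_2}{2}\lambda_0^2$, $\frac{\partial m}{\partial\lambda}=C_2\lambda_0\mu_0$, $\frac{\partial\ell}{\partial\mu}=\tfrac{C_2}{2}(\mu_0^2-\lambda_0^2)$, $\frac{\partial\ell}{\partial\lambda}=C_2\lambda_0(\lambda_0-\mu_0)$, and a short computation yields
\[
\det = C_2\lambda_0(\lambda_0-\mu_0)\Bigl[(C_1-C_2)\mu_0^2+\tfrac{C_2}{2}\lambda_0(\lambda_0+\mu_0)\Bigr].
\]
Since $C_1,C_2>0$ by \eqref{eq:c1c2} and $0<\mu_0<\lambda_0$, the bracket is $>C_1\mu_0^2>0$, so $(a,b)$ are uniquely and continuously determined by $(g,h)$. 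For $w_1$, the $F_1$-row reads $\phi_1(z)\,w_1(z)=f_1(z)-\tfrac{\partial F_1}{\partial\mu}(a)-\tfrac{\partial F_1}{\partial\lambda}(b)$ with $\phi_1(z)=\tfrac{C_1}{2}(\mu_0^2-v_{0,1}(z)^2)+\tfrac{C_2}{2}(\lambda_0^2-\mu_0^2)\ge\tfrac{C_2}{2}(\lambda_0^2-\mu_0^2)>0$ on $[1/2,\tilde z]$ (because $v_{0,1}$ increases from $0$ to $\mu_0$ there); hence $w_1$ is recovered by division by a continuous function bounded away from zero, lies in $L^\infty$ with the required one-sided continuity, vanishes at $1/2$ (so does the numerator, since $f_1(1/2)=0$ and $v_{0,1}(1/2)=0$), and satisfies $w_1(\tilde z)=a$ automatically — this identity is precisely the $m$-row already solved.

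The genuinely delicate step is the $F_2$-row, $\tfrac{C_2}{2}(\lambda_0^2-v_{0,2}(z)^2)\,w_2(z)=\psi(z)$ with $\psi(z)=f_2(z)-\tfrac{C_2}{2}(\mu_0^2-\lambda_0^2)a-C_2(v_{0,2}(z)-\mu_0)\lambda_0 b$: the multiplier $\tfrac{C_2}{2}(\lambda_0-v_{0,2}(z))(\lambda_0+v_{0,2}(z))$ is strictly positive on $[\tilde z,1)$ but vanishes at the outer edge $z=1$, so the inversion is singular there. Two observations rescue it. First, evaluating the $F_2$-row at $z=1$ forces $\psi(1)=0$, which is consistent exactly because it coincides with the $\ell$-row already solved. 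Second — and this is the heart of the matter, handled as in \cite[Lemma 4.1]{budif} using that $v_{0,2}$ is the pseudo-inverse of the explicit Barenblatt profile $\tilde\rho_2$ — one has the two-sided boundary asymptotics $\lambda_0-v_{0,2}(z)\asymp(1-z)^{1/2}$ as $z\to1$. Combined with $|f_2(z)-h|\lesssim 1-z$ (valid since $f_2\in\Omega_1$) this gives $|\psi(z)|\lesssim(1-z)^{1/2}$, whence $w_2\in L^\infty$; rewriting
\[
w_2(z)=\frac{2\lambda_0 b}{\lambda_0+v_{0,2}(z)}+\frac{2\bigl(f_2(z)-h\bigr)}{C_2\bigl(\lambda_0-v_{0,2}(z)\bigr)\bigl(\lambda_0+v_{0,2}(z)\bigr)}
\]
shows $w_2(z)\to b$ (so $w_2(1)=b$) and $|b-w_2(z)|\lesssim(1-z)^{1/2}$, i.e. $w_2\in\Omega_{1/2}$. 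All estimates are uniform in the data, so $L^{-1}$ is bounded; injectivity is the vanishing-data case ($(a,b)=0$ by the first step, then $w_1\equiv0$, then $w_2\equiv0$). Since by construction the $m$- and $\ell$-components of $L(w_1,a,w_2,b)$ equal $(F_1\text{-row})(\tilde z)$ and $(F_2\text{-row})(1)$, the image lies in $\Omega_1$, so $L$ is an isomorphism. The sole obstacle is thus this degenerate division at the outer boundary, and the required boundary rate for $v_{0,2}$ is the two-species counterpart of the single-species analysis in \cite[Section 4]{budif} — which is precisely why the weighted scale $\Omega_\alpha$, with domain exponent $1/2$ and codomain exponent $1$, was introduced.
\endproof
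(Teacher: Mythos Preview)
Your proof is correct and follows essentially the same approach as the paper: exploit the block-triangular structure of $D\mathcal{F}$ at $\delta=0$, invert the $2\times2$ scalar block via the determinant computation \eqref{eq:condition1}, recover $w_1$ by dividing by a strictly positive multiplier, and handle the degenerate division for $w_2$ at $z=1$ using the Barenblatt boundary asymptotics $\lambda_0-v_{0,2}(z)\asymp(1-z)^{1/2}$ from \cite{budif}. You are somewhat more explicit than the paper in verifying the endpoint constraints $w_1(1/2)=0$, $w_1(\tilde z)=a$, $w_2(1)=b$ and in spelling out why the $\Omega_{1/2}$ norm is controlled, and you add the (correct) remark that the extension to small $\delta>0$ follows from openness of invertibility together with the norm-continuity established around Lemma~\ref{lem1}.
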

\proof
We observe that the Jacobian of $\mathcal{F}$ at $\delta=0$ has the structure
\[D\mathcal{F}[(v_1,\mu,v_2,\lambda);0]=
\begin{pmatrix}
\frac{\partial F_1}{\partial v_1} & (C_1-C_2)v_1 \mu & 0 & C_2 v_1\mu \\
0 & C_1\mu^2 -\frac{3}{2}C_2\mu^2+\frac{C_2}{2}\lambda^2 & 0 & C_2\lambda\mu \\
0 & \frac{C_2}{2}(\mu^2-\lambda^2) & \frac{\partial F_2}{\partial v_2} & C_2(v_2-\mu)\lambda \\
0 & \frac{C_2}{2}(\mu^2-\lambda^2) & 0 & C_2(\lambda^2-\mu\lambda)
\end{pmatrix}.\]
Given $(h_1,\alpha,h_2,\beta)\in \Omega_{1}$, we have to prove that
\begin{equation}\label{eq:linearsystem}
  (w_1,a,w_2,b)^T=D\mathcal{F}[(v_1,\mu,v_2,\lambda);0](h_1,\alpha,h_2,\beta)^T,
\end{equation}
admits a unique solution $(w_1,a,w_2,b)\in \Omega_{1/2}$ with
\[\interleave (w_1,a,w_2,b)\interleave_{1/2}\leq C\interleave (h_1,\alpha,h_2,\beta)\interleave_{1},\]
for some $C>0$ independent of $(h_1,\alpha,h_2,\beta)$.

As a first step, we claim that $\frac{\partial F_1}{\partial v_1}$ is
invertible as a map from $L^\infty$ to $L^\infty$ at $\delta=0$. To see this, we use
\eqref{eq:F1v1}. For $h_1(z)=\frac{\partial F_1}{\partial v_1}(w_1)$ we get
\[\|w_1\|_{L^\infty(J_1)}= 2 \left\|\Big (C_1(v_1(z)^2-\mu^2)+C_2(\mu^2
-\lambda^2)\Big)^{-1}\right\|_{L^\infty(J_1)}\|h_1\|_{L^\infty(J_1)},\]
and the assertion follows. Therefore, the proof will be completed if we can
show that the sub-matrix
\[
\begin{pmatrix}
C_1\mu^2 -\frac{3}{2}C_2\mu^2+\frac{C_2}{2}\lambda^2 & 0 & C_2\lambda\mu \\
\frac{C_2}{2}(\mu^2-\lambda^2) & \frac{\partial F_2}{\partial v_2} & C_2(v_2-\mu)\lambda \\
\frac{C_2}{2}(\mu^2-\lambda^2) & 0 & C_2(\lambda^2-\mu\lambda)
\end{pmatrix},
\]
is an invertible operator in the components $(a,w_2,b)$. First, we prove that
\begin{equation}\label{eq:condition1}
  C_2\lambda(\lambda-\mu)\left(C_1\mu^2-C_2\mu^2+\frac{C_2}{2}\lambda^2+\frac{C_2}{2}\mu\lambda\right)\neq 0,
\end{equation}
which is equivalent to
\[ C_2\lambda(\lambda-\mu)\left(C_1\mu^2+\frac{C_2}{2}(\lambda-\mu)(\lambda+2\mu)\right)\neq 0.\]
This is always satisfied since $\lambda>\mu$. Condition \eqref{eq:condition1} implies that the linear system
\begin{eqnarray*}
   \left[C_1\mu^2 -\frac{3}{2}C_2\mu^2+\frac{C_2}{2}\lambda^2\right] a
+ C_2\lambda\mu b &=& \alpha, \\
   \frac{C_2}{2}(\mu^2-\lambda^2)a+ C_2(\lambda^2-\mu\lambda)b &=& \beta,
\end{eqnarray*}
has a unique solution $(a,b)$. Now we only need to determine $w_2$. By subtracting the last two rows of the linear system \eqref{eq:linearsystem}, and by some simple manipulation, we obtain
\[w_2(z)-b=\frac{2}{\lambda+v_2(z)}\left[\frac{b}{2}(\lambda-v_2(z))+\frac{2(h_2(z)-\beta)}{C_2(\lambda-v_2(z))}\right].\]
Since
$(\lambda-v_2(z))/(1-z)$
is uniformly positive on $[\tilde{z},1]$ (cf. a similar proof in
\cite[Lemma 1.4]{budif}), we obtain the desired assertion, by dividing the
above identity by $\sqrt{1-z}$.
\endproof

We are now ready to prove the main theorem of this section, Theorem \ref{teo_main}, as well as one of the most important results in this paper.


\begin{proof}[Proof of Theorem \ref{teo_main}]
The results in this section, in particular Lemma \ref{lem1} and Lemma \ref{lem2},
together with the implicit function theorem on Banach spaces (see e. g.
\cite[Theorem 15.1]{deimling}), imply that the functional equation
$\mathcal{F}[(v_1,\mu,v_2,\lambda);\delta]=0$ has a solution for $\delta$ small
enough. Here
$\mathcal{F}$ is defined in Subsection \ref{subsec:functional} (see in particular \eqref{Fgorilla11}) and at the
beginning of Subsection \ref{sec:implicit}. Hence, we obtain a solution $(v_1,v_2)$ to \eqref{gorilla11}-\eqref{gorilla22}.
The computations in Subsection \ref{subsec:formulation} imply that $(u_1,u_2)$
with $u_i=\delta v_i$, $i=1,2$, is a solution to \eqref{pseudo_stat} once we
achieve enough regularity for the $v_i$. This follows easily from \eqref{gorilla11}-\eqref{gorilla22}. Indeed, since $\delta$ is very small and since
$S_1$, $S_2$, and $K_0$ are bounded away from zero, we can easily recover
the $\partial_z v_i$ after differentiating \eqref{gorilla11}-\eqref{gorilla22}
with respect to $z$. In particular, we find that the $\partial_z v_i$ are
bounded away from zero. Hence, we can divide the resulting equations by
$\partial_z v_1$ and $\partial_z v_2$ respectively. Similar to the above, we can once again differentiate (using the $C^1$ regularity of the $v_i$) and
obtain \eqref{pseudo_stat} for $(u_1,u_2)$. The usual change of variable
transforming pseudo inverse variables to densities shows that $(\rho_1,\rho_2)$ solves \eqref{eq.stationary} where $\rho_i=\partial_x F_i$, $i=1,2$, and $F_i$ is the pseudo-inverse of $u_i$ for $i=1,2$.
\end{proof}

\section{Numerical Simulations}\label{sec:simulations}

Here we present some examples of sorting phenomena and mixing by solving
\eqref{eq.diffusion_1} numerically in one space dimension. We use a particle
method introduced for equations in gradient flow form in
\cite{car_pat1,car_pat2}, which is equivalent to a finite difference scheme for the pseudo-inverse equation, see also \cite{gosse_toscani} and \cite{dffrr} for scalar conservation laws.

In this section we denote by $\rho$ and $\eta$ the two densities and by $u$
and $v$ the corresponding pseudo-inverse functions, that are solutions of the following system
\begin{equation*}
\begin{cases}
 \frac{du}{dt}= -\epsilon\frac12\left(\left(u_z\right)^{-2}\right)_z-\epsilon\eta_x(u)+\int_0^1\left(S_1'(u(z)-u(\zeta))+K'(u(z)-v(\zeta))\right)d\zeta,		\\
 \frac{dv}{dt}= -\epsilon\frac12\left(\left(v_z\right)^{-2}\right)_z-\epsilon\rho_x(v)+\int_0^1\left(S_2'(v(z)-v(\zeta))+K'(v(z)-u(\zeta))\right)d\zeta,	
\end{cases}
\end{equation*}
for $z\in\left[0,1\right]$.
This is in order to avoid confusion w.r.t. the indices for discretization.
Clearly, the masses of $\rho$ and $\eta$ are normalized to one. The main
issue for the equations above is how to treat the cross-diffusion part
numerically. We intentionally left the cross-diffusion terms above in the
form of `external potentials'. Given $N\in\mathbb{N}$ we consider a partition
of the interval $\left[0,1\right]$, $\left\{z_i\right\}_{i=1}^N$ and call $u(z_i)=u_i$, $v(z_i)=v_i$, and $m=\frac{1}{N}$. The discretization in space
the reads
\begin{align*}
 & d_t u_i= \frac{m}{2}\frac{(u_{i+1}-u_i)^2-(u_{i}-u_{i-1})^2}{(u_{i+1}-u_i)^2}-\eta_x(u_i)+m\sum_{j=1}^N\left(S_1'(u_i-u_j)+K'(u_i-v_j)\right),		\\
 & d_t v_i= \frac{m}{2}\frac{(v_{i+1}-v_i)^2-(v_{i}-v_{i-1})^2}{(v_{i+1}-v_i)^2}-\rho_x(v_i)+m\sum_{j=1}^N\left(S_2'(v_i-v_j)+K'(v_i-u_j)\right).		
\end{align*}
for $i=1,...,N$. Integrating the above ODE system we get two families of particles $\left\{u_i(t)\right\}_{i=1}^N$, $\left\{v_i(t)\right\}_{i=1}^N$, for $t\in\left[0,T\right]$ and reconstruct the discrete densities as follows
\begin{align*}
&  \rho^N(t,x)=\sum_{i=1}^N\frac{2m}{u_{i+1}(t)-u_{i-1}(t)}\mathbf{1}_{\left\{\left(u_{i-\frac12}(t),u_{i+\frac12}(t)\right)\right\}},\\
&  \eta^N(t,x)=\sum_{i=1}^N\frac{2m}{v_{i+1}(t)-v_{i-1}(t)}\mathbf{1}_{\left\{\left(v_{i-\frac12}(t),v_{i+\frac12}(t)\right)\right\}}.
\end{align*}
Given the two initial conditions $\rho^0$ and $\eta^0$ the initial positions for the ODE system are determined via the atomization
\begin{align*}
  & u_1^0=  \sup_{x\in\R}\left\{\int_{-\infty}^x \rho^0(y)dy<\frac1N\right\}
\quad , \quad  u_i^0=  \sup_{x\in\R}\left\{\int_{u_{i-1}^0}^x \rho^0(y)dy<\frac1N\right\},\\
  & v_1^0=  \sup_{x\in\R}\left\{\int_{-\infty}^x \eta^0(y)dy<\frac1N\right\}
\quad , \quad  v_i^0=  \sup_{x\in\R}\left\{\int_{v_{i-1}^0}^x \eta^0(y)dy<\frac1N\right\},
\end{align*}
for $i=2,...,N$.
The cross-diffusion part is reconstructed at each time iteration using the
discretized density. For the nonlocal part we choose
%
%
\[
 S_i=\sigma_i K \  , \   \sigma_i > 0 \  , \  i=1,2 \ , \mbox{ and }
\sigma_1 + \sigma_2 > 2 \ ,
\]
where $K$ is a normalized Gaussian potential. For a diffusion coefficient
$\epsilon=1$, we show segregation phenomena in Figures \ref{fig:1},
\ref{fig:2} for two different choices of initial conditions.
Two different types of segregation are possible. In \figurename~\ref{fig:1} the initial data for the two species are perfectly matching, this produces symmetric segregated states in the large time limit. In \figurename~\ref{fig:2} the two initial data are shifted, this produces a non symmetric segregation in which the two species form two adjacent patterns with connected support. Mixing is shown
in Figures \ref{fig:3}, \ref{fig:4} for the diffusion dominated regime, namely
$\sigma_1+\sigma_2<2$ and diffusion coefficient $\epsilon>1$. Again, different
situations may arise depending on the initial data. In the former case
(perfectly overlapping initial conditions) the two species are almost entirely
overlapping for large times, whereas in the latter case they
overlap in a proper subset of the support of $\rho+\eta$. In all simulations
we have $N=50$ and final time $T=2$.
\begin{figure}[htbp]
\includegraphics[width=9cm]{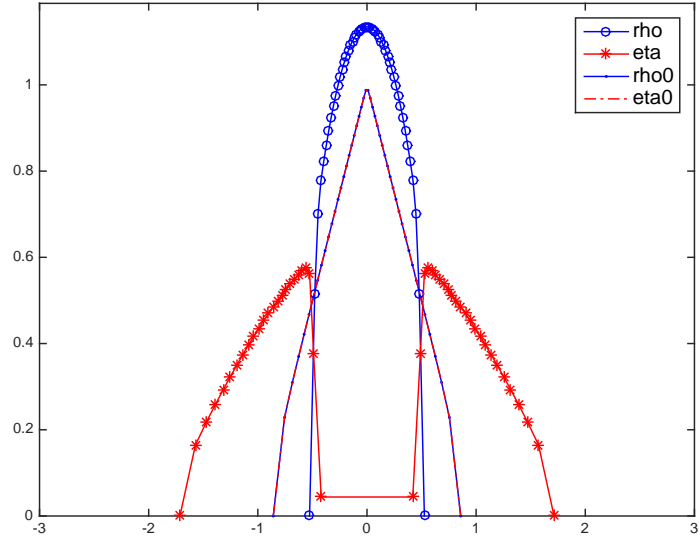}
\caption{Symmetric segregation for $\epsilon=1$, $\sigma_1=10$, $\sigma_2=1.5$, $\rho_0(x)=\eta_0(x)=\left(1-|x|\right)_+$}\label{fig:1}
\end{figure}

\begin{figure}[htbp]
\includegraphics[width=9cm]{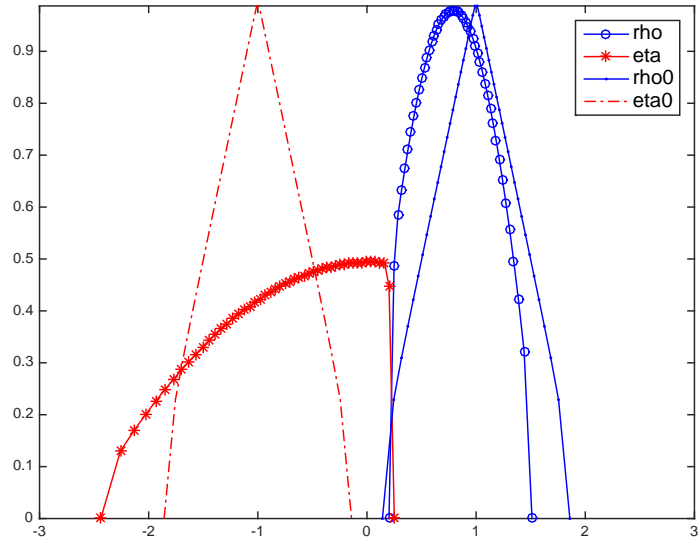}
\caption{Segregation for two densities that are initially disjointed, with $\epsilon=1$, $\sigma_1=10$, $\sigma_2=1.5$, $\rho_0(x)=\left(1-|x-1|\right)_+$, $\eta_0(x)=\left(1-|x+1|\right)_+$}\label{fig:2}
\end{figure}

\begin{figure}[htbp]
\includegraphics[width=9cm]{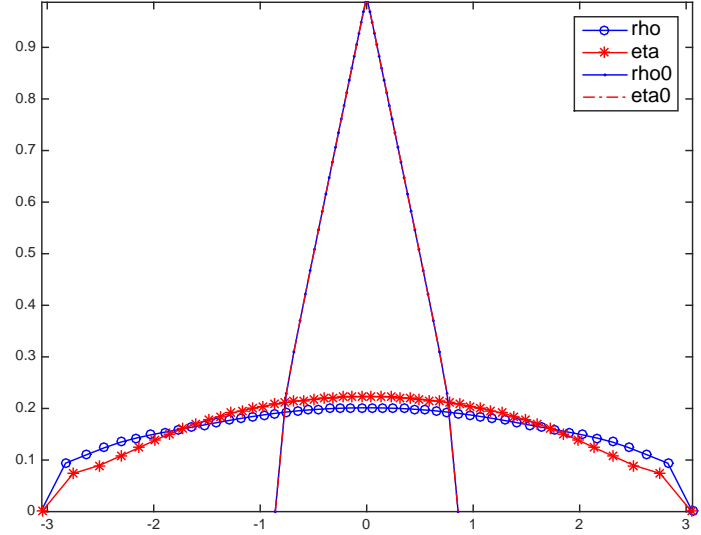}
\caption{Diffusion dominated regime for $\epsilon=3$, $\sigma_1=0.1$, $\sigma_2=0.8$, $\rho_0(x)=\eta_0=(x)=\left(1-|x|\right)_+$}\label{fig:3}
\end{figure}

\begin{figure}[htbp]
\includegraphics[width=9cm]{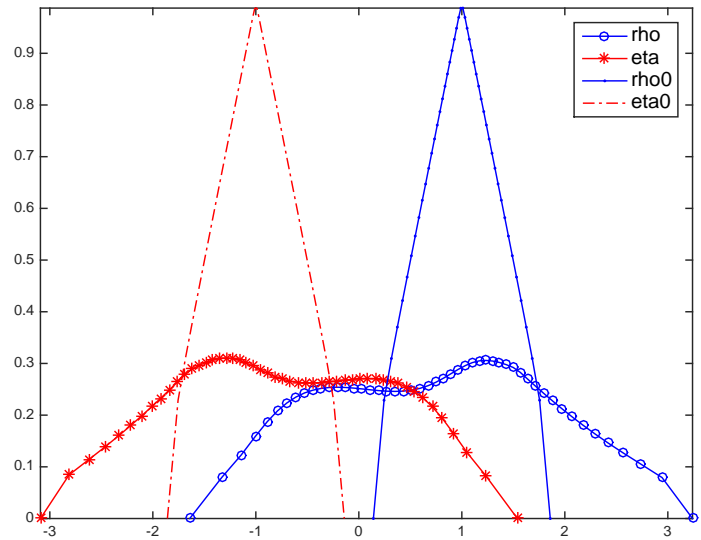}
\caption{Mixing phenomena in the diffusion dominated regime. Here $\epsilon=3$, $\sigma_1=0.1$, $\sigma_2=0.8$, $\rho_0(x)=\left(1-|x-1|\right)_+$, $\eta_0(x)=\left(1-|x+1|\right)_+$}\label{fig:4}
\end{figure}

\section*{Acknowledgments}

MDF and SF were supported by the EU Erasmus Mundus MathMods programme, www.mathmods.eu, by the GNAMPA (Italian group of Analysis, Probability,
and Applications) projects \emph{Geometric and qualitative properties of solutions
to elliptic and parabolic equations}, respectively \emph{Analisi e
stabilit\`a per modelli di
equazioni alle derivate parziali nella matematica applicata}. MDF and
SF acknowledge hospitality by the Institut f\"ur Angewandte Mathematik: Analysis
und Numerik, Westf\"alische Wilhelms-Universit\"at M\"unster, which hosted them
in 2012 when the first ideas of this work came out. This work was done while
 MB and AS were PIs of the Excellence Cluster {\sl{Cells in Motion}} (CiM), funded
by the DFG.


\begin{thebibliography}{10}

\bibitem{aletti}
G.~Aletti, G.~Naldi, and G.~Toscani.
\newblock First-order continuous models of opinion formation.
\newblock {\em SIAM J. Appl. Math.}, 67(3):837--853 (electronic), 2007.

\bibitem{alt}
W.~Alt.
\newblock Degenerate diffusion equation with drift functionals modelling
  aggregation.
\newblock {\em Nonlin. Anal.}, 9:811--836, 1985.

\bibitem{AGS}
L.~Ambrosio, N.~Gigli, and G.~Savar{\'e}.
\newblock {\em Gradient flows in metric spaces and in the space of probability
  measures}.
\newblock Lectures in Mathematics ETH Z\"urich. Birkh\"auser Verlag, Basel,
  second edition, 2008.

\bibitem{degond}
C.~Appert-Rolland, P.~Degond, and S.~Motsch.
\newblock Two-way multi-lane traffic model for pedestrians in corridors.
\newblock {\em Netw. Heterog. Media}, 6(3):351--381, 2011.

\bibitem{bedrossian}
J.~Bedrossian.
\newblock Global minimizers for free energies of subcritical aggregation
  equations with degenerate diffusion.
\newblock {\em Applied Mathematics Letters}, 24:1927--1932, 2011.

\bibitem{bedrossian2014}
J.~Bedrossian.
\newblock Large mass global solutions for a class of l1-critical nonlocal
  aggregation equations and parabolic-elliptic {P}atlak-{K}eller-{S}egel
  models.
\newblock {\em Comm. Partial Differential Equations}, 40:1119--1136, 2015.

\bibitem{caglioti}
D.~Benedetto, E.~Caglioti, and M.~Pulvirenti.
\newblock A kinetic equation for granular media.
\newblock {\em RAIRO Mod\'el. Math. Anal. Num\'er.}, 31(5):615--641, 1997.

\bibitem{bertozzi_et_al}
A.~L. Bertozzi, T.~Laurent, and J.~Rosado.
\newblock {$L^p$} theory for the multidimensional aggregation equation.
\newblock {\em Comm. Pure Appl. Math.}, 64(1):45--83, 2011.

\bibitem{bertsch_dalpasso_mimura}
M.~Bertsch, R.~Dal~Passo, and M.~Mimura.
\newblock A free boundary problem arising in a simplified tumour growth model
  of contact inhibition.
\newblock {\em Interfaces and Free Boundaries}, 12:235--250, 2010.

\bibitem{bertsch_gurtin_hilhorst}
M.~Bertsch, M.~E. Gurtin, and D.~Hilhorst.
\newblock On interacting populations that disperse to avoid crowding: the case
  of equal dispersal velocities.
\newblock {\em Nonlinear Analysis, Theory, Methods and Applications},
  11:493--499, 1987.

\bibitem{bertsch_gurtin_hilhorst_peletier}
M.~Bertsch, M.~E. Gurtin, D.~Hilhorst, and L.~A. Peletier.
\newblock On interacting populations that disperse to avoid crowding:
  preservation of segregation.
\newblock {\em J. Math. Biology}, 23:1--13, 1985.

\bibitem{bertsch_hilhorst_izuhara_mimura}
M.~Bertsch, D.~Hilhorst, H.~Izuhara, and M.~Mimura.
\newblock A nonlinear parabolic-hyperbolic system for contact inhibition of
  cell-growth.
\newblock {\em Differential Equations and Applications}, 4:137--157, 2012.

\bibitem{biler}
P.~Biler.
\newblock Growth and accretion of mass in an astrophysical model.
\newblock {\em Applicationes Mathematicae}, 23:173--189, 1995.

\bibitem{blanchet_carrillo_carlen}
A.~Blanchet, E.~Carlen, and J.~A. Carrillo.
\newblock Functional inequalities, thick tails and asymptotics for the critical
  mass {P}atlak-{K}eller-{S}egel model.
\newblock {\em Journal of Functional Analysis}, 261:2142–2230, 2012.

\bibitem{blanchet_carrillo_laurencot}
A.~Blanchet, J.~A. Carrillo, and P.~Laurencot.
\newblock Critical mass for a {P}atlak-{K}eller-{S}egel model with degenerate
  diffusion in higher dimensions.
\newblock {\em Calculus of Variations and Partial Differential Equations},
  35:133–168, 2009.

\bibitem{carrillo&co}
A.~Blanchet, J.A. Carrillo, D.~Kinderlehrer, M.~Kowalczyk, P.~Laurencot, and
  S.~Lisini.
\newblock A hybrid variational principle for the {K}eller-{S}egel system in
  {$\R^2$}.
\newblock {\em ESAIM: Mathematical Modelling and Numerical Analysis},
  49(6):1553--1576, 2015.

\bibitem{perthame}
A.~Blanchet, J.~Dolbeault, and B.~Perthame.
\newblock Two-dimensional {K}eller-{S}egel model: Optimal critical mass and
  qualitative properties of the solutions.
\newblock {\em Electron. J. Differ. Equations, (2006)}, (44):1--32
  (electronic), 2006.

\bibitem{boi}
S.~Boi, V.~Capasso, and D.~Morale.
\newblock Modeling the aggregative behavior of ants of the species {\it
  {p}olyergus rufescens}.
\newblock {\em Nonlinear Anal. Real World Appl.}, 1(1):163--176, 2000.
\newblock Spatial heterogeneity in ecological models (Alcal{\'a} de Henares,
  1998).

\bibitem{brilliantov}
N.~V. Brilliantov and T.~P{\"o}schel.
\newblock {\em Kinetic theory of granular gases}.
\newblock Oxford Graduate Texts. Oxford University Press, Oxford, 2004.

\bibitem{BuDiF_NHM}
M.~Burger and M.~Di~Francesco.
\newblock Large time behavior of nonlocal aggregation models with nonlinear
  diffusion.
\newblock {\em Netw. Heterog. Media}, 3(4):749--785, 2008.

\bibitem{budif}
M.~Burger and M.~Di~Francesco.
\newblock Large time behavior of nonlocal aggregation models with nonlinear
  diffusion.
\newblock {\em Netw. Heterog. Media}, 3(4):749--785, 2008.

\bibitem{franek}
M.~Burger, M.~Di~Francesco, and M.~Franek.
\newblock Stationary states of quadratic diffusion equations with long-range
  attraction.
\newblock {\em Commun. Math. Sci.}, 11:709--738, 2013.

\bibitem{burger_fetecau_huang}
M.~Burger, R.~C. Fetecau, and Y.~Huang.
\newblock Stationary states and asymptotic behaviour of aggregation models with
  nonlinear local repulsion.
\newblock {\em SIAM J. Appl. Dyn. Syst.}, 13:397–424, 2014.

\bibitem{choi_carrillo_hauray}
J.~A. Carrillo, Y.-P. Choi, and M.~Hauray.
\newblock The derivation of swarming models: mean-field limit and {W}asserstein
  distances. {I}n: {C}ollective dynamics from bacteria to crowds, 1-46
  {C}{I}{S}{M} {C}ourses and {L}ect.
\newblock 553, 2014.

\bibitem{CFSS}
J.~A. Carrillo, S.~Fagioli, F.~Santambrogio, and M.~Schmidtchen.
\newblock Splitting schemes \& segregation in reaction-(cross-)diffusion
  systems.
\newblock {\em arXiv: 1711.05434}, 2017.

\bibitem{car_pat2}
J.~A. Carrillo, Y.~Huang, F.~S. Patacchini, and G.~Wolansky.
\newblock Numerical study of a particle method for gradient flows.
\newblock {\em Kinetic and Related Models (KRM)}, 10:613--641, 2017.

\bibitem{zoology}
J.~A. Carrillo, Y.~Huang, and M.~Schmidtchen.
\newblock Zoology of a non-local cross-diffusion model for two species.
\newblock {\em arXiv: 1705.03320}, 2017.
\newblock to appear in SIAM J. Appl. Math.

\bibitem{carrillo_mccann_villani}
J.~A. Carrillo, R.~J. McCann, and C.~Villani.
\newblock Contractions in the 2-{W}asserstein length space and thermalization
  of granular media.
\newblock {\em Arch. Ration. Mech. Anal.}, 179(2):217--263, 2006.

\bibitem{car_pat1}
J.~A. Carrillo, F.~S. Patacchini, P.~Sternberg, and G.~Wolansky.
\newblock Convergence of a particle method for diffusive gradient flows in one
  dimension.
\newblock {\em SIAM J. Math. Anal.}, 48:3708--3741, 2016.

\bibitem{chen_kolokolnikov}
Y.~Chen and T.~Kolokolnikov.
\newblock A minimal model of predator-swarm interaction.
\newblock {\em J. R. Soc. Interface}, 11, 2014.

\bibitem{novaga_et_al}
M.~Cicalese, L.~De~Luca, M.~Novaga, and M.~Ponsiglione.
\newblock Ground states of a two phase model with cross and self attractive
  interactions.
\newblock {\em SIAM J. Math. Anal.}, 48(5):3412--3443, 2016.

\bibitem{deimling}
K.~Deimling.
\newblock {\em Nonlinear Functional Analysis}.
\newblock Springer, Berlin, 1985.

\bibitem{xxx2007}
L.~Derbel and P.E. Jabin.
\newblock The set of concentration for some hyperbolic models of chemotaxis.
\newblock {\em J. of Hyperbolic Differ. Equ.}, 4:331--349, 2007.

\bibitem{esposito}
M.~Di~Francesco, A.~Esposito, and S.~Fagioli.
\newblock Nonlinear degenerate cross-diffusion systems with nonlocal
  interaction.
\newblock {\em Nonlinear Analysis}, 169:94--117, 2018.

\bibitem{fagioli}
M.~Di~Francesco and S.~Fagioli.
\newblock Measure solutions for non-local interaction {P}{D}{E}s with two
  species.
\newblock {\em Nonlinearity}, 26:2777--2808, 2013.

\bibitem{dif_fag2}
M.~Di~Francesco and S.~Fagioli.
\newblock A nonlocal swarm model for predators–prey interactions.
\newblock {\em Mathematical Models and Methods in Applied Sciences},
  26:319--355, 2016.

\bibitem{dffrr}
M.~Di~Francesco, S.~Fagioli, M.~D. Rosini, and G.~Russo.
\newblock Follow-the-leader approximations of macroscopic models for vehicular
  and pedestrian flows.
\newblock {\em arXiv:1610.06743}, 2016.

\bibitem{yahya}
M.~Di~Francesco and Y.~Jaafra.
\newblock Multiple large-time behavior of nonlocal interaction equations with
  quadratic diffusion.
\newblock {\em arXiv: 1710.0821}, 2017.

\bibitem{during}
B.~D{\"u}ring, P.~Markowich, J.-F. Pietschmann, and M.-T. Wolfram.
\newblock Boltzmann and {F}okker-{P}lanck equations modelling opinion formation
  in the presence of strong leaders.
\newblock {\em Proc. R. Soc. Lond. Ser. A Math. Phys. Eng. Sci.},
  465(2112):3687--3708, 2009.

\bibitem{espejo}
E.~E. Espejo, A.~Stevens, and J.~J.~L. Vel\'{a}zquez.
\newblock Simultaneous finite time blow-up in a two-species model for
  chemotaxis.
\newblock {\em Analysis (Munich)}, 29(3):317--338, 2009.

\bibitem{gajewski-groeger}
H.~Gajewski and Gr\"oger K.
\newblock On the basic equations for carrier transport in semiconductors.
\newblock {\em J. Math. Anal. Appl.}, 113(1):12--35, 1986.

\bibitem{gajewski-zacharias}
H.~Gajewski and K.~Zacharias.
\newblock Global behavior of a reaction-diffusion system modelling chemotaxis.
\newblock {\em Math. Nachr.}, 195:77--114, 1998.

\bibitem{Geigant}
E.~Geigant and M.~Stoll.
\newblock Stability of peak solutions of a non-linear transport equation on the
  circle.
\newblock {\em Electron. J. Differential Equations}, 157:41, 2012.

\bibitem{gosse_toscani}
L.~Gosse and G.~Toscani.
\newblock Lagrangian numerical approximations to one-dimensional
  convolution-diffusion equations.
\newblock {\em SIAM J. Sci. Comput.}, 28:1203--1227, 2006.

\bibitem{velazquez}
M.A. Herrero and J.J.L. Vel\'{a}zquez.
\newblock Chemotactic collapse for the {K}eller-{S}egel model.
\newblock {\em J. Math. Biol.}, 35:177--194, 1996.

\bibitem{horstmann1}
D.~Horstmann.
\newblock From 1970 until present: the {K}eller-{S}egel model in chemotaxis and
  its consequences. {I}.
\newblock {\em Jahresber. Deutsch. Math.-Verein.}, 105:103--165, 2003.

\bibitem{horstmann2}
D.~Horstmann.
\newblock From 1970 until present: the {K}eller-{S}egel model in chemotaxis and
  its consequences. {II}.
\newblock {\em Jahresber. Deutsch. Math.-Verein.}, 106:51 -- 69, 2004.

\bibitem{horst_lucia}
D.~Horstmann and M.~Lucia.
\newblock Nonlocal elliptic boundary value problems related to chemotactic
  movement of mobile species.
\newblock In {\em Mathematical analysis on the self-organization and
  self-similarity}, RIMS K\^oky\^uroku Bessatsu, B15, pages 39--72. Res. Inst.
  Math. Sci. (RIMS), Kyoto, 2009.

\bibitem{jaegerluckhaus}
W.~J\"ager and S.~Luckhaus.
\newblock On explosions of solutions to a system of partial differential
  equations modelling chemotaxis.
\newblock {\em Trans. Amer. Math. Soc.}, 329(2):819--824, 1992.

\bibitem{JKO}
R.~Jordan, D.~Kinderlehrer, and F.~Otto.
\newblock The variational formulation of the {F}okker-{P}lanck equation.
\newblock {\em SIAM J. Math. Anal.}, 29(1):1--17, 1998.

\bibitem{jungel_boundedness}
A.~J\"{u}ngel.
\newblock The boundedness-by-entropy method for cross-diffusion systems.
\newblock {\em Nonlinearity}, 28(6):1963--2001, 2015.

\bibitem{KaibPhD}
G.~Kaib.
\newblock {\em Ph.D-thesis, University of M\"unster}, 2016.

\bibitem{Kaib}
G.~Kaib.
\newblock Stationary states of an aggregation equation with degenerate
  diffusion and bounded attractive potential.
\newblock {\em SIAM J. Math. Anal.}, 49:272--296, 2017.

\bibitem{kang-primi-velazquez}
K.~Kang, I.~Primi, and {J.J.L.} Vel\'{a}zquez.
\newblock A 2d-model of cell sorting induced by propagation of chemical signals
  along spiral waves.
\newblock {\em Comm. Partial Differential Equations}, 38:1069--1122, 2013.

\bibitem{keller-segel}
E.F Keller and L.~Segel.
\newblock Initiation of slime mold aggregation viewed as an instability.
\newblock {\em J. theor. Biol.}, 26:399--415, 1970.

\bibitem{alpar}
I~Kim and A.~R. Mészáros.
\newblock On nonlinear cross-diffusion systems: an optimal transport approach.
\newblock {\em arXiv: 1705.02457}, 2017.

\bibitem{laurencot}
P.~Laurencot and B.-V. Matioc.
\newblock {A gradient flow approach to a thin film approximation of the Muskat
  problem}.
\newblock {\em {Calculus of Variations and Partial Differential Equations}},
  47:319--341, 2013.

\bibitem{king}
G.~Lemon and J.~King.
\newblock A functional differential equation model for biological cell sorting
  due to differential adhesion.
\newblock {\em Math. Models Methods Appl. Sci.}, 23:93--126, 2013.

\bibitem{mielke}
M.~Liero and A.~Mielke.
\newblock Gradient structures and geodesic convexity for reaction-diffusion
  systems.
\newblock {\em Philos. Trans. R. Soc. Lond. Ser. A Math. Phys. Eng. Sci.},
  371(2005):20120346, 28, 2013.

\bibitem{magni}
A.~Magni, C.~Mantegazza, and M.~Novaga.
\newblock Motion by curvature on planar networks {II}.
\newblock {\em Ann. Sc. Norm. Super. Pisa Cl. Sci.}, 15(5):117--144, 2016.

\bibitem{matsukuma1979chemotactic}
S.~Matsukuma and A.~Durston.
\newblock Chemotactic cell sorting in dictyostelium discoideum.
\newblock {\em J. Embryol. Exp. Morphol.}, 50:243--251, 1979.

\bibitem{mimura-yamaguti}
M.~Mimura and M.~Yamaguti.
\newblock Pattern formation in interacting and diffusing systems in population
  biology.
\newblock {\em Adv. Biophys.}, 15:19--65, 1982.

\bibitem{capasso}
D.~Morale, V.~Capasso, and K.~Oelschl\"{a}ger.
\newblock An interacting particle system modelling aggregation behavior:from
  individuals to populations.
\newblock {\em Journal of Math. Biology}, 50:49--66, 2005.

\bibitem{moser2}
J.~Moser.
\newblock A sharp form of an inequality by n. trudinger.
\newblock {\em Indiana Univ. Math. Journal}, 20(11):1077--1092, 1971.

\bibitem{moser}
J.~Moser.
\newblock On a nonlinear problem in differential geometry.
\newblock {\em Dynamical systems. Proc. Sympos., Univ. Bahia, Salvador, 1971.
  Academic Press, New York}, pages 273--280, 1973.

\bibitem{nagai}
T.~Nagai.
\newblock Blow-up of radially symmetric solutions to a chemotaxis system.
\newblock {\em Adv. Math. Sci. Appl.}, 5:581--601, 1995.

\bibitem{oelschlaeger}
K.~Oelschl\"{a}ger.
\newblock Large systems of interacting particles and porous medium equation.
\newblock {\em J. Diff. Eq.}, 88:294--346, 1990.

\bibitem{Painter2009}
K.~J. Painter.
\newblock Continuous models for cell migration in tissues and applications to
  cell sorting via differential chemotaxis.
\newblock {\em Bulletin of Mathematical Biology}, 71:1117–1147, 2009.

\bibitem{perthame-book}
B.~Perthame.
\newblock {\em Parabolic equations in biology. Growth, reaction, movement and
  diffusion}.
\newblock Springer, 2015.

\bibitem{post}
K.~Post.
\newblock A system of non-linear partial differential equations modeling
  chemotaxis with sensitivity functions.
\newblock {\em Ph.D.-thesis, Humboldt University of Berlin}, 1999.

\bibitem{Primi}
I.~Primi, A.~Stevens, and Vel\'{a}zquez J.J.L.
\newblock Mass-selection in alignment models with non-deterministic effects.
\newblock {\em Comm. Partial Differential Equations}, 34(4-6):419--456, 2009.

\bibitem{schaaf}
R.~Schaaf.
\newblock Stationary solutions of chemotaxis systems.
\newblock {\em Trans. Amer. Math. Soc.}, 292(2):531--556, 1985.

\bibitem{Stevens2000}
A.~Stevens.
\newblock The derivation of chemotaxis equations as limit dynamics of
  moderately interacting stochastic many-particle systems.
\newblock {\em SIAM J. Appl. Math}, 61(1):183--212, 2000.

\bibitem{sugiyama1}
Y.~Sugiyama.
\newblock Global existence in sub-critical cases and finite time blow-up in
  super critical cases to degenerate {K}eller-{S}egel systems.
\newblock {\em Differential Integral Equations}, 19(8):841--976, 2006.

\bibitem{sugiyama2}
Y.~Sugiyama.
\newblock Time global existence and asymptotic behavior of solutions to
  degenerate quasi-linear parabolic systems of chemotaxis.
\newblock {\em Differential Integral Equations}, 20(2):133--180, 2007.

\bibitem{sznajd}
K.~Sznajd-Weron and J.~Sznajd.
\newblock Opinion evolution in closed community.
\newblock {\em Int. J. Mod. Phys. C}, 11:1157--1166, 2000.

\bibitem{topaz}
C.~M. Topaz, A.~L. Bertozzi, and M.A. Lewis.
\newblock {A nonlocal continuum model for biological aggregation.}
\newblock {\em Bulletin of Mathematical Biology}, 68(7):1601--1623, 2006.

\bibitem{vasiev-weijer}
B.~Vasiev and C.J. Weijer.
\newblock Modeling chemotactic cell sorting during {D}ictyostelium discoideum
  mound formation.
\newblock {\em Biophys. J.}, 76:595--605, 1999.

\bibitem{vazquez}
J.~L. V{\'a}zquez.
\newblock {\em The porous medium equation}.
\newblock Oxford Mathematical Monographs. The Clarendon Press Oxford University
  Press, Oxford, 2007.
\newblock Mathematical Theory.

\bibitem{vossboehme}
A.~Voss-B\"ohme and A.~Deutsch.
\newblock The cellular basis of cell sorting kinetics.
\newblock {\em J. Theoret. Biol.}, 263:419--436, 2010.

\bibitem{wolansky92}
G.~Wolansky.
\newblock On steady distributions of self-attracting clusters under friction
  and fluctuations.
\newblock {\em Arch. Rational Mech. Anal.}, 119(4):355--391, 1992.

\bibitem{wolansky1}
G.~Wolansky.
\newblock Multi-component chemotactic system in the absence of conflict.
\newblock {\em European J. of Applied Mathematics}, 13(6):641--661, 2002.

\bibitem{wolansky2}
G.~Wolansky.
\newblock Chemotactic systems in the presence of conflicts: a new functional
  inequality.
\newblock {\em J. Differential Equations}, 261(9):5119--5143, 2016.

\bibitem{matzin}
J.~Zinsl and Matthes D.
\newblock Transport distances and geodesic convexity for systems of degenerate
  diffusion equations.
\newblock {\em Calc. Var. Partial Differential Equations}, 54(4):3397--3438,
  2015.

\end{thebibliography}

\end{document}